\documentclass[10pt]{article}
    \usepackage[utf8]{inputenc}
    \newcommand{\aff}{\operatorname{aff}}
    \usepackage{algorithm}
    \usepackage{authblk}
     \usepackage{algpseudocode}
    \usepackage{fullpage}
    \usepackage{natbib}
    \usepackage{amsthm}
    \usepackage{color}
    \usepackage{amsmath}
    \usepackage{amsfonts}
    \usepackage{xcolor}
    \colorlet{linkequation}{blue}
    \usepackage{framed}
    \usepackage{times}
    \usepackage{amsmath}
    \usepackage{amsthm}
    \usepackage{amssymb}
    \usepackage{wasysym}
    \usepackage{mathrsfs}
    \usepackage{bbm}
    \usepackage{geometry} 
    \usepackage{mathtools}
    \usepackage[colorlinks=true,citecolor=blue,linkcolor=blue]{hyperref}
        \usepackage{cleveref}

    \newtheorem{theorem}{Theorem}

    \newtheorem{definition}[theorem]{Definition}
    
    \newtheorem{lemma}[theorem]{Lemma}
    \newtheorem{corollary}[theorem]{Corollary}
    \newtheorem{remark}[theorem]{Remark}

    \newtheorem{assumption}[theorem]{Assumption}

    \newcommand{\E}{{\mathbb E}}
    
    \newcommand{\R}{{\mathbb R}}

    \newcommand{\PP}{\mathbb{P}}

    % \newcommand{\LSet}{\vec{L}}

    % \footnote{Partial and simplified version of this manuscript appeared at Conference on Learning Theory 2022 \url{https://proceedings.mlr.press/v178/kur22a.html}}

    \newcommand{\eps}{\epsilon}

    \newcommand{\conv}{\mathrm{conv}}
    \newcounter{rcnt}[section]

    \newcommand{\asyd}{\underset{d}{\asymp}}
    % \newcommand{\lesssimd}{\underset{d}{\lesssim}}
% \crefformat{section}{\S#2#1#3} % see manual of cleveref, section 8.2.1
% \crefformat{subsection}{\S#2#1#3}
% \crefformat{subsubsection}{\S#2#1#3} 

    \def\argmin{\mathop{\rm argmin}}

    \newcommand{\vol}{\operatorname{vol}}

    \newcommand{\QQ}{\mathbb Q}
    
    \newcommand{\cA}{{\mathcal A}}
    \newcommand{\cB}{{\mathcal B}}
    
    \newcommand{\cD}{{\mathcal D}}
    
    \newcommand{\cF}{{\mathcal F}}

    \newcommand{\cM}{{\mathcal M}}
    \newcommand{\cN}{{\mathcal N}}
    \newcommand{\cO}{{\mathcal O}}

    \newcommand{\cR}{{\mathcal R}}
    \newcommand{\cS}{{\mathcal S}}

    \newcommand{\lesssimd}{\underset{d}{\lesssim}}
    \newcommand{\gtrsimd}{\underset{d}{\gtrsim}}

    \setlength{\parskip}{1.4 \medskipamount}

    \usepackage{lipsum}
    \newcommand{\erm}{\widehat{f}_n}
    
    \usepackage[T1]{fontenc}
    \usepackage{authblk}
    \author[1]{Gil Kur} 
    \author[2]{Eli  Putterman}
    \affil[1]{Electrical Engineering and Computer Science, Massachusetts Institute of Technology }
    \affil[2]{Department of Mathematics, Tel Aviv University}

    \title{An Efficient Minimax Optimal Estimator For Multivariate Convex Regression}
    \begin{document}
    \date{}
    \maketitle
\begin{abstract}%
This work studies the computational aspects of multivariate convex regression in dimensions $d \ge 5$. Our results include the \emph{first} estimators that are minimax optimal (up to logarithmic factors) with polynomial runtime in the sample size for both $L$-Lipschitz convex regression, and $\Gamma$-bounded convex regression under polytopal support. Our analysis combines techniques from empirical process theory, stochastic geometry, and potential theory, and leverages recent algorithmic advances in mean estimation for random vectors and in distribution-free linear regression. These results provide the first efficient, minimax-optimal procedures for non-Donsker classes for which their corresponding least-squares estimator is provably minimax-suboptimal.

     %In the remaining case of $d \leq 4$, the corresponding Least Squares Estimators  for these tasks are efficient minimax optimal estimators.
    %  We also provide a new approach to construct computationally efficient minimax optimal estimators in the task of non-parametric regression, a result of independent interest.
\end{abstract}

    \section{Introduction and Main Results}
    %   \blfootnote{Keywords: Multivariate Convex Regression; Efficient Minimax Optimal Estimator; Non-Donsker Classes;}%
    
% \begin{keywords}%
%   Multivariate Convex Regression; Minimax Optimality; Non-Donsker Regime  %
% \end{keywords}
 We consider the task of regression in the well-specified  model in the random design setting: 
    \[
        Y = f^{*}(X) + \xi
    \]
    where $f^{*}:\Omega \to \R$ lies in a known function class $\cF$, $X$ is drawn from a \emph{known} distribution $\PP$ on $\Omega$, and $\xi$ is a zero-mean noise with a finite variance $\sigma^2$.
    
   Given $(\cF,\PP)$ and $n$ i.i.d. observations $\cD:=\{(X_1,Y_1),\ldots,(X_n,Y_n)\}$,  we aim to estimate the underlying function $f^{*}$ as well as possible with respect to the classical minimax risk  \citep{tysbakovnon}. More precisely, the risk of  an estimator $\bar{f}:\cD \to \cB(\Omega)$, where $\cB(\Omega)$ are the measurable functions, is defined as
    \[
        \cR_n(\bar{f},\cF,\PP):= \sup_{f^{*} \in \cF} \E_{\cD} \int (f^{*} - \bar{f})^2d\PP,
    \]
    and the minimax rate equals to \[
    \cM_n(\cF,\PP):= \inf_{\bar{f}}\cR_n(\bar{f},\cF,\PP).\] 
    Our goal is to find an estimator that is \emph{minimax optimal} up to logarithmic factors in $n$, i.e. its risk satisfies 
    \[
        \cR_n(\bar{f},\cF,\PP) = \Tilde{O}_{\PP,\cF}(\cM_n(\cF,\PP)):= \cM_n(\cF,\PP) \cdot \log(n)^{O_{\PP,\cF}(1)},
    \]
    where $O_{\PP,\cF}(\cdot)$ denotes equality up to a multiplicative constant that depends only on $\PP, \cF$. We would also like our estimator to be  \emph{efficiently computable}, which for our purposes means that its runtime is polynomial in the number of samples: i.e. of order $n^{O_{\PP,\cF}(1)}$ when $n$ is large enough.  In this paper, $B_d$ denotes the unit (Euclidean) ball in dimension $d$, $\cF$ is one of the following two subclasses of convex functions
    % recall that the minimax rate  considers the inefficient estimators as well in the number of samples, i.e. of at most $O_d(n^{O_d(1)})$,
    % recall that the minimax rate  considers the inefficient estimators as well.  
    \begin{enumerate}
        \item $\cF_L(\Omega)$ --- the class of convex $L$-Lipschitz functions supported on a convex set $\Omega \subseteq B_d$.
        % $f^{*} \in \cF_{L}(\Omega)$ Estimating a convex Lipschitz function that is supported on a convex domain $\Omega \subset \R^{d}$.
        \item $\cF^\Gamma(P)$ --- the class of convex functions on a polytope $P \subset B_d$ unifromly bouned by $[-\Gamma, \Gamma]$.
    \end{enumerate}
    These tasks are known as $L$-Lipschitz convex regression  \citep{seijo2011nonparametric} and $\Gamma$-bounded convex regression  \citep{han2016multivariate}, respectively. For reasons which will become apparent later, we always take $d \ge 5$. We also further assume that $\PP$ satisfies the following:
    
    %Since, the definition of a convex function clearly implies a convex support, we prove our results under the assumption of $\PP$ being the uniform distribution over the domain $\Omega$ or $P$, see Remark \ref{Rem:Conference} below for additional details.  
    \begin{assumption}\label{Ass:One}
    $\PP$ is uniformly bounded on its support by some positive constants $c(d),C(d)\geq 0 $ that only depend on $d$, i.e. $c(d) \leq \frac{d\PP}{dx}(x) \leq C(d)$, for all $x \in \mathrm{Supp}(\PP)$.
    \end{assumption}
    % We remark that in the $\Gamma$-bounded regression, the minimax rates are different when the domain is a polytope with restricted number of facets or a domain is smooth convex body
    % and $n$ samples, $Y_1,\ldots,Y_n$,  of the underlying function $f^{*} \in \cF$, where $\cF$ is some known function class, that are corrupted by zero mean variance $\sigma^2$ guassian noise, i,e. 
    % \[
    % \forall \  1\leq i \leq n \quad  Y_i = f^{*}(X_i) + \xI_j \text{ where $\xI_j \underset{i.i.d}{\sim} N(0,\sigma^2) $,} 
    % \]
    % where in the first / second task  / $f^{*} \in \cF^{\Gamma}(\Omega)$ respectively. 
    
    % λ f0(x1) + (1−λ)f0(x2) ≥ f0(λx1 + (1−λ)x2),
    % for every x1,x2 ∈ X and λ ∈ (0,1). Given the observations (x1, y1),...,(xn, yn), we would like to
    % estimate f0 subject to the convexity constraint. Convex regression is easily extended to concave
    % regression since a concave function is the negative of a convex function.
    Convex regression tasks have been a central concern in the ``shape-constrained" statistics literature  \citep{devroye2012combinatorial}, and have innumerable applications in a variety of disciplines, from economic theory \citep{varian1982nonparametric} to operations research \citep{powell2003stochastic}  and more \citep{balazs2016convex}. In general, convexity is extensively studied in pure mathematics \citep{artstein2015asymptotic}, computer science \citep{lovasz2007geometry}, and optimization \citep{boyd2004convex}. We remark that there is a density-estimation counterpart of the convex regression problem, known as log-concave density estimation \citep{samworth2018recent,cule2010maximum}, and these two tasks are closely related \citep{kur2019optimality,kim2016global}.
    
    Due to the appearance of convex regression in various fields, it has been studied from many perspectives and by many different communities. For example, in the mathematical statistics literature the minimax rates of convex regression tasks and the risk of the Maximum Likelihood Estimator (MLE) are the main areas of interest; an incomplete sample of works treating this problem is \citep{guntuboyina2012optimal,guntuboyina2013covering,gardner1995geometric,gao2017entropy,Kur20,han2019global,brunel2013adaptive,diakonikolas2018polynomial,diakonikolas2016efficient,carpenter2018near,kur2019optimality,balazs2015near}. In operations research, work has focused on the algorithmic aspects of convex regression, i.e., finding scalable and efficient algorithms; see, e.g., \citep{ghosh2021max,brunel2016adaptive,o2021spectrahedral,soh2021fitting,balazs2016convex,mazumder2019computational,chen2020multivariate,bertsimas2021sparse,siahkamari2021faster,simchowitz2018adaptive,hannah2012ensemble,blanchet2019multivariate,lin2020efficient,chen2021newtwo,balazs2022adaptively}. Initially, convex regression was mostly studied in the univariate case, which is now considered to be well-understood. Multivariate convex regression has only begun to be explored in recent years, and is still an area of active research.
    
    % , whereas in the recent years multivariate .    
    % The task of convex regression has a very rich literature: The statistical aspects of this prolems such as the  
    % \subsection{The Task of Convex Regression}
    %  see an incomplete list   \citep{ghosh2021max,brunel2013adaptive,brunel2016adaptive,o2021spectrahedral,soh2021fitting,balazs2016convex,mazumder2019computational,chen2020multivariate,bertsimas2021sparse,siahkamari2021faster,hannah2013multivariate})
    % Our rest, we aim to find an estimator $\bar{f}$ that recover the underlying function $f^{*}$ in a \emph{minimax-optimal} way based on the observations $\{(X_i,Y_i)\}_{i=1}^{n}$ in a computationally efficient way, by that we mean that  that runs in polynomial time in the number samples, i.e. its runtime is $O_d(n^{O_d(1)})$ and also
    % \[
    %     \cA:\{ () \}
    % \] 
    % *************
    % From a statistical point of view, t
    
    The na\"ive algorithm for any variant of the convex regression task is the least squares estimator (LSE), which is also the MLE under Gaussian noise, defined by
    \begin{equation}\label{Eq:LSE}
\erm:= \argmin_{f \in \cF}\sum_{i=1}^{n}(Y_i-f(X_i))^2,
    \end{equation}
    where  $\cF = \cF_L(\Omega)$ or $\cF  = \cF^{\Gamma}(P)$ in our convex regression tasks.
    % , in the first or the second task respectively that appear. 
    From a computational point of view, the LSE can be formulated as a quadratic programming problem with $O(n^2)$ constraints, and is thus efficiently computable in our terms \citep{seijo2011nonparametric,han2016multivariate}; however, the LSE has been seen empirically not to be scalable for large number of samples \citep{chen2020multivariate}. 
    
    From a statistical point of view, the minimax rates of both of our convex regression tasks are $\Theta_{d,L,\sigma}(n^{-\frac{4}{d+4}})$ and $\Theta_{d,\Gamma,\sigma,}(C(P)\cdot n^{-\frac{4}{d+4}})$ for all $d \geq 1$ \citep{gao2017entropy,bronshtein1976varepsilon,yang1999information}. The LSE is minimax optimal \emph{only} in low dimension, when $d \leq 4$ \citep{birge1993rates}, while for $d \geq 5$ it attains a suboptimal risk of $\Tilde\Theta_d(n^{-\frac{2}{d}})$  \citep{Kur20,KurConv}. The poor statistical performance of the LSE for $d \ge 5$ has also been verified empirically   \citep{gardner2006convergence,ghosh2021max}. There are known minimax optimal estimators when $d \geq 5$, yet all of them are computationally inefficient. Moreover, all of them are based on some sort of discretization of the relevant function classes, i.e., they consider some $\eps$-nets (see Definition \ref{Def:Net} below). In our tasks, these algorithms require examining nets of cardinality of order $\exp(n^{\Theta(1)})$, and are thus perforce inefficient \citep{rakhlin2017empirical,guntuboyina2012optimal}. 
    
    % , we remark that when $d \geq 5$, these classes are known to be as non Donsker classes (see Def. below), and also sufferers from has non-parametric adaptation rates as was observed in ***.  
    % r our multivariate convex regression tasks (when $d \geq 5$)
    % the task of multivariate convex regression
    The empirically-observed poor performance of the LSE and the computational intractability of known minimax optimal estimators have motivated the study of efficient algorithms for convex regression with better statistical properties than the LSE; an incomplete list of relevant works appears above. However, previously studied algorithms are either provably minimax suboptimal or do not provide any statistical guarantees at all with respect to the minimax risk. We would however like to mention the ``adaptive partitioning'' estimator constructed in \cite{hannah2013multivariate}, which is the first provable computationally efficient estimator for convex regression which has been shown to be \emph{consistent} in the $L_{\infty}$ norm. The authors' approach is somewhat related to our proposed algorithm, but it is unknown whether their algorithm is minimax optimal.
    
    %Clearly,   we can ask the following question: Does exist a computationally efficient minimax optimal algorithm for the task of multivariate convex regression? Any answer has insightful implications: a negative one would essentially imply that minimax optimality may not be reachable in polynomial time, and therefore \emph{may not} be a practical measure of statistical performance. Whereas, an affirmative answer shows that minimax optimality can be reached in an efficient in the task of convex regression. Also, such an algorithm may provide some insights for improving known estimators that are used in practice for these tasks. Moreover, from a theoretical statistics point of view, it would imply that there are minimax optimal efficient estimators for non-Donsker classes (see Definition \ref{Def:Donsker} below), when the LSE is provably minimax suboptimal, a result that was not known before. 
    
    Our main results are the existence of computationally efficient minimax-optimal estimators for multivariate Lipschitz convex regression under \textit{any convex support} and bounded convex regression under \textit{polytopal support}.  To present the results, we use expressions such as $g(n) \le O_d(f(n))$ and $g(n) \lesssimd f(n)$ to mean that there exists $C_d \geq 0$ such that $g(n) \le C_d f(n)$ for all $n$. Specifically, we prove the following results:
    
    % \end{}
        % \center{ }
        % \item There exists a universal constant $C_1 \geq 0 $ such that $ \E [\xi^4] \leq C_1\sigma^4$;
    % \end{enumerate}
    %  (ii)  we prove the following results:
    \begin{theorem}
    \label{Thm:ConvexFunc}
    Let $d \geq 5$ and $n \geq d+1$. Then, under Assumption \ref{Ass:One}, in the task of  $L$-Lipschitz convex regression on a convex domain $\Omega  \subset B_d$, there exists an estimator, $\widehat{f}_{L}$, with runtime of at most $n^{O(d)}$ such that
    \begin{equation}
    \cR_{n}(\widehat{f}_{L},\cF_L(\Omega),\PP) \lesssimd (\sigma+L)^2n^{-\frac{4}{d+4}}\log(n)^{h(d)} 
    % O_d(\cM_{n}(\cF_{L}(\Omega),\PP))
    \end{equation}
     where $h(d) \leq 3d$.
    \end{theorem}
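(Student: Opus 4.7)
For $d \ge 5$ the unconstrained LSE in \eqref{Eq:LSE} overfits because the tent-like spikes it can exploit make the convex cone $\cF_L(\Omega)$ effectively much richer at small scales than its Bronshtein-type metric entropy $\eps^{-d/2}$ would predict. The plan is to run the least-squares criterion not over $\cF_L(\Omega)$ itself but over a carefully chosen polynomial-size polyhedral sub-cone $\cG_{\eps}$ that is nevertheless rich enough to $L^2(\PP)$-approximate every $f^* \in \cF_L(\Omega)$ to error $O(L^2\eps^4)$. Concretely, fix a cubical grid at resolution $\eps \asymp n^{-1/(d+4)}$ and let $\cG_{\eps}$ be the class of $L$-Lipschitz convex functions that are affine on each cell of the grid intersected with $\Omega$. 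Continuity across shared faces and local convexity at each internal face translate into linear inequalities on the per-cell affine coefficients (and the Lipschitz constraint reduces to a per-cell bound on the gradient), so $\cG_{\eps}$ is a polyhedral convex cone of dimension $O(\eps^{-d}) = O(n^{d/(d+4)})$.

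\textbf{Algorithm and risk decomposition.} The estimator $\widehat{f}_L$ is the LSE constrained to $\cG_{\eps}$. Computing it is a convex quadratic program with $O(n^{d/(d+4)})$ variables, $O(d\cdot n^{d/(d+4)})$ affine constraints, and a quadratic cost built from the $n$ observations; any standard interior-point method solves it in time $n^{O(d)}$. For the analysis I would use the bias--variance split
\[
\cR_n(\widehat{f}_L,\PP,\cF_L(\Omega)) \;\lesssim\; \inf_{g \in \cG_{\eps}} \int (f^*-g)^2 \, d\PP \;+\; \E \int (\widehat{f}_L - g_*)^2 \, d\PP,
\]
where $g_* \in \cG_{\eps}$ is a near-best approximant of $f^*$. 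The second term is a finite-dimensional constrained-LSE error on a convex cone of statistical dimension $\asymp n^{d/(d+4)}$; a standard Dudley/Rademacher chain bounds it by $O(\sigma^2 \cdot n^{d/(d+4)}/n) = O(\sigma^2 n^{-4/(d+4)})$ up to log factors. If the approximation term is $O(L^2 \eps^4)$, balancing at $\eps \asymp n^{-1/(d+4)}$ yields the claimed rate.

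\textbf{Main obstacle.} The hardest ingredient is the approximation bound: showing that for every $L$-Lipschitz convex $f^*$, some piecewise-affine $g \in \cG_{\eps}$ satisfies $\|f^*-g\|_{L^2(\PP)}^2 = O(L^2 \eps^4)$. A pointwise Taylor argument gives only $O(L\eps)$, since a Lipschitz convex function need not be $C^{1,1}$. The improved averaged bound exploits the fact, essentially potential-theoretic, that the distributional Hessian of an $L$-Lipschitz convex function is a positive matrix-valued Radon measure whose trace (the Laplacian) has finite total mass $O(L\cdot \mathrm{per}(\Omega))$. One would then take $g$ to be the cell-wise best affine approximation of $f^*$, show via an integration-by-parts / Poincaré-type estimate that
\[
\int_{C_k} (f^* - \ell_k)^2 \,dx \;\leq\; C \eps^4 \cdot \bigl(\text{Hessian mass in }C_k\bigr)^2,
\]
and sum over cells using Cauchy--Schwarz plus the global mass bound. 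A secondary difficulty, and where stochastic geometry enters, is the control of cells cut by $\partial \Omega$: these anisotropic slivers must be handled separately, using volume and Voronoi-type concentration under Assumption~\ref{Ass:One}. Finally, the polylogarithmic overhead $\log(n)^{h(d)}$ with $h(d)\le 3d$ accumulates from (i) a union bound over the $O(n^{d/(d+4)})$ cells when concentrating the local sample counts and noise averages, and (ii) a discretization of $\cG_{\eps}$ in the empirical-process step, each contributing a $\log n$ factor per affine degree of freedom.
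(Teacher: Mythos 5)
Your plan hinges on the approximation claim that for \emph{every} $f^*\in\cF_L(\Omega)$ there is a convex function $g$, piecewise affine on a \emph{fixed} cubical grid of mesh $\eps$, with $\|f^*-g\|_{L^2(\PP)}^2=O(L^2\eps^4)$. That claim is false, and it is the step where your approach and the paper's diverge fundamentally. Take $f^*(x)=L|x_1-a|$ with $a$ centered in a grid slab. The best affine approximation to $L|t|$ on an interval of length $\eps$ has 1D squared $L^2$-error of order $L^2\eps^3$; extending constantly in the remaining $d-1$ directions, each of the $\asymp\eps^{-(d-1)}$ cells straddling the kink hyperplane contributes $\asymp L^2\eps^{d+2}$, so the total squared error is $\asymp L^2\eps^3$, not $\eps^4$. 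Writing $k=\eps^{-d}$ for the number of cells, a fixed partition achieves only $k^{-3/d}$, whereas the target rate $k^{-4/d}$ (which is what makes the $n^{-4/(d+4)}$ bound come out) requires placing the simplices \emph{adaptively} so that they align with the Hessian singular set of $f^*$. Balancing your actual approximation error against the $\sigma^2\eps^{-d}/n$ estimation error gives $n^{-3/(d+3)}$, which is strictly worse than $n^{-4/(d+4)}$ for every $d$. Your heuristic inequality $\int_{C_k}(f^*-\ell_k)^2\le C\eps^4\,(\text{Hessian mass in }C_k)^2$ followed by summing via Cauchy--Schwarz also does not close: when the Hessian measure concentrates on a hyperplane the mass-in-cell scales like $\eps^{d-1}$ across $\eps^{-(d-1)}$ cells, and there is no global $\ell^2$ control on the per-cell masses that would recover $\eps^4$.

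This is precisely the obstruction the paper is built around. Its approximation result (Theorem~\ref{Cor:Aproximation}) produces a $k$-simplicial $f_k$ whose simplices \emph{depend on} $f^*$: they come from the random polytope generated by sampling the epigraph of $f^*$, and the $k^{-4/d}$ error is proven by a thin-shell argument, not by a fixed mesh. Because the good simplices are unknown, the algorithm cannot simply run LSE over a pre-specified finite-dimensional sub-cone; instead it enumerates all $\binom{n}{d+1}$ candidate simplices with data-point vertices, fits a local affine regressor and a verified $L^2$-error certificate (Lemmas~\ref{Lem:LinearReg}, \ref{Lem:VerMSE}) on each, and solves an SOCP to find a convex function consistent with all certificates; Lemma~\ref{Lem:RandomSimp} guarantees that the unknown good simplices have, with high probability, close surrogates in the candidate set. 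A secondary issue with your construction: a convex function that is affine on each \emph{cube} of a cubical grid is forced (by continuity and gradient-monotonicity across shared faces) to be separable, $f(x)=\sum_i\phi_i(x_i)$, so $\cG_\eps$ as literally defined is far too small; you would need to triangulate each cube, which still leaves the $\eps^3$ barrier above. If you want to pursue a fixed-partition LSE, you would have to first prove an adaptive partition can be found from data, which is essentially the content of Sections~\ref{App:One}--\ref{App:RandomSimp}.
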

    % \begin{remark}
    % Theorem \ref{Thm:ConvexFunc} gives a minimax optimal estimator in many natural cases; e.g., it applies when the polytope $P$ is assumed to have only $C(d)$ vertices or facets, where $C(d)$ is a constant that only depends on $d$; this class includes, for instance, the unit cube, the simplex, and the $\ell_1$ ball. Then by \citep{mcmullen1970}, the second term in our bound is of order $\tilde{O}_d(n^{-\frac{d}{d+4}})$, which is strictly smaller than the minimax rate $\Theta_d(n^{-\frac{4}{d+4}})$.
    
% \begin{remark}
%          One can remove the redundant term that depends on $P$. Specifically, the following holds:
%          \[
%                 \cR_{n}(\widehat{f}_{L},\cF_{L}(\Omega),\PP) \leq O_d((\sigma+L)^2n^{-\frac{4}{d+4}}\log(n)^{h(d)})
%          \]
%          for any convex domain $\Omega \subset B_d$. The proof for an arbitrary convex body $\Omega$ involves a more involved and technical detour through stochastic geometry, which does not provide an additional insight.  
% \end{remark}
    
    \begin{theorem}\label{Thm:ConvexFunc2}
    Let $d \geq 5$ and $n \geq d+1$. Then,  under Assumption \ref{Ass:One}, for the task of  $\Gamma$-bounded convex regression on the polytope $ P \subset B_d$,  there exists an efficient estimator, $\widehat{f}^{\Gamma}$, with runtime of at most $n^{O(d)}$ such that
    \[
    \cR_{n}(\widehat{f}^{\Gamma},\cF^{\Gamma}(P),\PP) \lesssimd  C_1(P)(\sigma+\Gamma)^2n^{-\frac{4}{d+4}} \log(n)^{h(d)},
    %  O_d(\cM_{n}(\cF^{\Gamma}(P),\PP)) = \max\{(\sigma+L)^2,(\sigma+L)^4\}
    \]
    where $h(d) \leq 3d$ and  $C_1(P)$ is a constant that only depends on $P$.
    \end{theorem}
    
    As we mentioned earlier, for both of these two tasks the minimax rate is of order $n^{-\frac{4}{d+4}}$, so up to polylogarithmic factors in $n$, the above estimators are minimax optimal. We note that in Theorem \ref{Thm:ConvexFunc2}, the dependence of the constants on the polytope $P$ is unavoidable. This follows from the results of \citep{gao2017entropy,han2016multivariate}, in which the authors showed the geometry of the support of the measure $\PP$ affects the minimax rate of bounded convex regression. For example, in the extreme case $\Omega = B_d$, the minimax rate is of order $n^{-\frac{2}{d+1}}$, which is asymptotically larger than the error rate for polytopes; thus, if we take a sequence of polytopes which approaches $B_d$, the sequence of constants $C(P_n)$ will necessarily blow up.

\begin{remark}
        The dependence on the domain $P$ in Theorem \ref{Thm:ConvexFunc2} is actually an artifact of the use of the $L_2(\PP)$ metric, as the $L_2(\PP)$ entropy numbers of the class of $\Gamma$-bounded convex functions on a domain $\Omega$ depend strongly on the geometry of $\Omega$. One can show that for any convex domain $\Omega$ the minimax rate of bounded convex regression in the $L_1(\PP)$-metric is of order $\Theta_{d,\Gamma,\sigma}(n^{-\frac{2}{d+4}})$ (this is due to the fact that unlike the $L_2$-entropy numbers, the $L_1(\PP)$-entropy numbers do not depend on the domain), and there exists an efficient minimax optimal estimator attaining this rate. Thus, in the $L_1(\PP)$ setting, the minimax rate for convex regression is \emph{universal}. 
\end{remark}
    
    We consider our results as mainly a proof-of-concept for the existence of efficient estimators for the task of convex regression when $d \geq 5$. Due to their high polynomial runtime, in practice our estimators would probably not work well. However, as we mentioned above, the other minimax optimal estimators in the literature are computationally inefficient, and they all require consideration of some net of exponential size in $n$; our estimator is conceptually quite different. We hope that insights from our algorithm can be used to construct a practical estimator with the same desirable statistical properties.  From a purely theoretical point of view, our estimators are the first known minimax optimal efficient estimators for non-Donsker classes  for which their LSE are provably minimax suboptimal in $L_2$ (see Definition \ref{Def:Donsker} and Remark \ref{Rem:Donsker} below). Prior to this work, there were efficient optimal estimators for non-Donsker classes such that their corresponding LSE (or MLE) is provably efficient and optimal (such as log-concave density estimation and isotonic regression,  cf. \cite{kur2019optimality,han2019isotonic,han2019global,pananjady2022isotonic}).
    Our work should be contrasted with these earlier works. We show that it is possible to overcome the suboptimality of the LSE with an efficient optimal algorithm in the non-Donsker regime --- a result that was unknown before this work.

    % In Appendix \ref{Sec:Disc}, we show that our algorithm can be viewed as an instance of a new generic approach to non-parametric regression, which might be extensible to other non-parametric regression tasks.
    
    % The paper is organized as follows: In The next section, we present our estimator and a proof of its correctness. We conclude this introduction with a few remarks:
    %  In Appendix \ref{Sec:Disc}, we discuss our results through the lens of non-Donsker classes, and present the new approach which led us to constructing our efficient estimator.
    We prove Theorem \ref{Thm:ConvexFunc} in Section \ref{Sec:Proof}. The proof of Theorem \ref{Thm:ConvexFunc2} uses the same method as that of Theorem \ref{Thm:ConvexFunc}, along with the main result of \citep[Thm 1.1]{gao2017entropy}. We sketch the requisite modifications to the proof in Section \ref{Sec:Cor}. We conclude this introduction with the following remarks:
    \begin{remark}
    
    \begin{enumerate}
        \item We conjecture that the estimators of Theorems \ref{Thm:ConvexFunc} and \ref{Thm:ConvexFunc2} are minimax-optimal up to constants that only depend on $d,\sigma$, i.e the. $\log(n)$ factors are unnecessary.
        
        \item Our estimators' runtime is of order $n^{O(d)}$, which is much worse than the $O_d(n^{O(1)})$ runtime of the suboptimal convex LSE.   
         
        % The more challenging question is whether the assumptions of known $\PP$\footnote{In this case to be able to draw points uniformly from $\Omega$} and $\sigma^2$ can be relaxed. We leave these questions for future work.
        \item \label{Rem:affine} When $d \geq 5$, one can show that when $f^{*}$ is a max $k$-affine function (restricted to $P \subset B_d$), i.e. $f^{*}\mathbbm{1}_{P}(x) = \max_{1 \leq i \leq k}a_{i}^{\top}x + b_i$,
    our estimator attains a parametric rate, i.e.  
    \[
        \E \int (\widehat{f}^{\Gamma} - f^{*})^2d\PP \leq \Tilde{O}_d\left(\frac{C(P,k)}{n}\right).
    \]
    When $d \leq 4$, \cite{han2016multivariate} showed that  $\Gamma$-bounded convex LSE, that is defined in  \eqref{Eq:LSE} with $\cF = \cF^{\Gamma}(P)$, attains a parametric rate as well. However, when $d \geq 5$, the LSE attains a non-parametric error of $\Tilde{\Theta}_d(C(P,k)n^{-4/d})$ (\cite{KurConv}; for a more general result see \cite{Kur21}). Therefore, our algorithm has the proper adaptive rates when $d \geq 5$; see \cite{ghosh2021max} for more details.
    \item 
    \label{Rem:PR}
    An interesting property of our estimator is that the random design setting, i.e. the fact that data points $X_1,\ldots,X_n$ are drawn from $\PP$ rather than fixed, is essential to its success, a phenomenon not often observed when studying shape-constrained estimators. Usually these estimators also perform well on a ``nice enough" fixed design set, for example when $\Omega = [-1/2,1/2]^{d}$ and $X_1,\ldots,X_n$ are the regular grid points. 
    \end{enumerate}
    \end{remark}
    
    % \begin{remark}
    % In Corollary \ref{Cor:ConvexFunc}, the risk depends on the polytope $C(P)$ and this constant is essential \citep{han2016approximation}. Yet, if the risk were with respect to $L_1(\PP)$ (rather than the squared error), then one can show that the risk does not depend on $P$ anymore. Specifically, for any convex domain $\Omega \subset \R^d$, the following holds:
    % \[
    %  \sup_{f^{*} \in \cF^{\Gamma}(\Omega)}\E|\widehat{f}^{\Gamma}-f^{*}| \leq  O_d( \Gamma n^{-\frac{2}{d+4}}\log(n)^{\frac{d}{2}}).
    % \]
    % \end{remark}

    % We conclude this part of the introduction by a question that is the main motivation of this paper. and $\QQ_n'$ , and $\QQ_n' = n^{-1}\sum_{i=1}^{n}\delta_{X_{n+i}}$.

    \section{Proposed Estimator of Theorem \ref{Thm:ConvexFunc}}\label{Sec:Proof}
    % In this section, we present the estimator of Theorem \ref{Thm:ConvexFunc} and the proof of its correctness.
    % \begin{remark}\mbox{}\label{Rem:Conference}
    %  For completeness, we provide the full algorithm in Section \ref{SubSec:Modi} below.
        % \item It would seem natural to weaken our assumptions on $\PP$, and assume only that it is uniformly bounded on its support by some fixed positive constants (i.e., $c \leq \frac{d\PP}{dx}(x) \leq C$, for all $x \in \mathrm{supp}(\PP)$). Unfortunately, one of our crucial arguments (Lemma \ref{Lem:VerMSE} below) uses ideas from potential theory that only work when $\PP$ is uniform on the domain.
    % \end{remark}
    % For simplicity of notation and readability, we prove that our algort
    \subsection{Notations and preliminaries}
    Throughout this text, $C,C_1,C_2 \in (1,\infty)$ and $c,c_1,c_2,\ldots \in (0,1)$ are positive absolute constants that may change from line to line. Similarly, $ C(d),C_1(d),C_2(d),\ldots \in (1,\infty)$ and\\ $c(d),c_1(d),c_2(d),\ldots \in (0,1)$ are positive constants that only depend on $d$ that may change from line to line.  
    
    For any probability measure $\QQ$ and $m \geq 0$, we introduce the notation $\QQ_m$ for the random empirical measure of  $Z_{1},\ldots,Z_{m} \underset{i.i.d.}{\sim}\QQ$ , i.e. $\QQ_m = m^{-1}\sum_{i=1}^{m}\delta_{Z_i}$. Also, given a subset $A \subset \Omega$ of positive measure, we let $\mathbb P_A$ denote the conditional probability measure on $A$. For a positive integer $k$, $[k]$ denotes $\{1, \ldots, k\}$.
    
    % Also, we use the notation $\cD := \{(X_i,Y_i)\}_{i=1}^{n}$,
    % Also, for a given measurable set  denote by $int(A)$ to be the interior of the set $A$.
    \begin{definition}
    A simplex in $\R^{d}$ is the convex hull of $d+1$ points $v_1,\ldots,v_{d+1} \in \mathbb R^d$ which do not all lie in any hyperplane.
    \end{definition}
    % The following is a classical definition in stochastic geometry \cite{schutt1990convex}.
    % \begin{definition}
    % For every convex set $\Omega \subset \R^{d}$ and $\eps \in (0,1)$, we define its convexified floating body w.r.t. $\PP$
    % \[
    %     \Omega^{f}_{\eps}:= \bigcap_{\PP(\Omega \setminus H^{+}) \leq \eps} \PP(\Omega \cap H^{-}),
    % \]
    % where $H$ is a $(d-1)$-hyperplane. 
    % \end{definition}
    \begin{definition}\label{Def:ConvSimFunc}
    A convex function $f : \Omega \to \R$ is defined to be  $k$-simplicial if there exists $\triangle_1,\ldots,\triangle_k \subset \R^{\dim(\Omega)}$ simplices such that $\Omega = \bigcup_{i=1}^{k}\triangle_i$ and for each $1\leq i \leq k$, we have that   $f:\triangle_i \to \R$ is affine.
    \end{definition}
    Note that the definition is more restrictive than the usual definition of a $k$-max affine function (see Remark \ref{Rem:affine}), since the affine pieces of a $k$-max affine function are not constrained to be simplices.
    
    The following result from empirical process theory is a corollary of the peeling device \citep[Ch. 5]{van2000empirical}, \citep{bousquet2002concentration} and Bronshtein's entropy bound \citep{bronshtein1976varepsilon}.
    \begin{lemma}\label{Lem:Bro}
    Let $d \geq 5$, $m \geq C^{d}$ and $\QQ$ be a probability measure on $\Omega' \subset B_d$. Suppose $Z_1,\ldots,Z_m$ are drawn independently from $\QQ$; then with probability at least $1 - C_1(d) \exp(-c_1(d)\sqrt{m})$, the following holds uniformly for all $f,g \in \cF_{L}(\Omega')$:
    \begin{equation}\label{Eq:Bro}
        2^{-1}\int_{\Omega'} (f-g)^2 d\QQ - CL^2m^{-\frac{4}{d}} \leq \int (f-g)^2 d\QQ_m \leq 2\int_{\Omega'} (f-g)^2 d\QQ + CL^2m^{-\frac{4}{d}}.
    \end{equation}
    \end{lemma}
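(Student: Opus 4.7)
The plan is to apply the \emph{peeling device} of \citet{van2000empirical} to the class of squared differences $\mathcal{H} := \{(f-g)^2 : f, g \in \cF_L(\Omega')\}$, combining Bronshtein's covering-number bound for $\cF_L(\Omega')$ with Bousquet's concentration inequality \citep{bousquet2002concentration}. Writing $h := f - g$, the function $h$ is $2L$-Lipschitz on $\Omega' \subset B_d$, and after absorbing an inessential common additive shift one may assume $\|h\|_\infty \leq 4L$. The goal is then to prove the uniform two-sided deviation $|\QQ_m(h^2) - \QQ(h^2)| \leq \tfrac{1}{2}\,\QQ(h^2) + CL^2 m^{-4/d}$ with the claimed probability.

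I would set $r_0 := C(d)\,L^2 m^{-4/d}$ for a suitably large constant, and partition the class by $L^2(\QQ)$-radius: let $\mathcal{S}_0 := \{h : \QQ(h^2) \leq r_0\}$ and $\mathcal{S}_j := \{h : 2^{j-1} r_0 < \QQ(h^2) \leq 2^j r_0\}$ for $j = 1, \ldots, J$, with $J = O(d \log m)$ since $\QQ(h^2) \leq 16L^2$. It suffices to show that on each shell, with probability at least $1 - C(d)\,e^{-c(d)\sqrt m}$, the deviation $Z_j := \sup_{h \in \mathcal{S}_j}\,|(\QQ_m - \QQ)(h^2)|$ is at most $\max(2^{j-2} r_0,\, r_0/4)$. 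The two sides of the claimed inequality then follow by elementary algebra on each shell, and the union bound over the $O(\log m)$ shells absorbs the extra logarithmic factor into a slightly smaller $c_1(d)$.

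To bound $\E Z_j$ on each shell, I would combine symmetrization and chaining with Bronshtein's entropy bound $\log N(\eps, \cF_{2L}(\Omega'), \|\cdot\|_\infty) \leq C(d)(L/\eps)^{d/2}$ \citep{bronshtein1976varepsilon}, which transfers to $\mathcal{H}$ in $L^2(\QQ)$ via the pointwise estimate $|h^2 - (h')^2| \leq 8L\,|h - h'|$. Since the resulting Dudley integral $\int_0^\sigma \sqrt{(L/\eps)^{d/2}/m}\,d\eps$ diverges at zero for $d \geq 5$, I would truncate the chaining at the critical scale $\eps^* \sim L\,m^{-2/d}$, which yields $\E Z_j \lesssim \max(2^j r_0,\, r_0)/8$ once the constant in $r_0$ is taken large enough. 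Bousquet's inequality then upgrades the expectation bound to a high-probability bound: with envelope $U \lesssim L^2$ and weak variance $v_j \lesssim L^2 \cdot \max(2^j r_0,\, r_0)$, choosing the deviation parameter of size $\sqrt m$ makes the additional Gaussian and Bernstein contributions $\sqrt{t v_j/m} + tU/m$ negligible compared to $\max(2^j r_0,\, r_0)/8$, using $d \geq 5$ and $m \geq C^d$.

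The principal technical obstacle is the chaining step: because $\cF_L(\Omega')$ is non-Donsker for $d \geq 5$, Dudley's inequality cannot be invoked directly, and the truncation scale $\eps^* \sim L\,m^{-2/d}$, at which the Bronshtein entropy becomes of order $m$, must be identified as the correct critical radius in order for the chained Rademacher bound on each shell to match the additive constant $r_0 \sim L^2 m^{-4/d}$ appearing in the statement. Once $r_0$ and $\eps^*$ are aligned, the peeling and Bousquet steps proceed by the standard recipe, and the stretched-exponential tail $\exp(-c(d)\sqrt m)$ emerges from the Gaussian part of Bousquet's bound applied at deviation parameter $t \sim \sqrt m$.
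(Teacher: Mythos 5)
Your general architecture (peel by $L^2(\QQ)$-radius, Bronshtein entropy plus chaining for the local Rademacher complexity, Bousquet concentration, union over $O(\log m)$ shells) is exactly what the authors point to, but the central quantitative step does not go through as you state it. You claim that truncating the chaining at $\eps^* \sim L m^{-2/d}$ ``yields $\E Z_j \lesssim \max(2^j r_0, r_0)/8$'' with $r_0 = C(d)L^2 m^{-4/d}$. However, the discretization cost at that scale for the squared class $\mathcal{H} = \{h^2\}$ is not $(\eps^*)^2 \sim L^2 m^{-4/d}$ but $L\eps^* \sim L^2 m^{-2/d}$: if $\|h - h_i\|_\infty \leq \eps^*$ then $|h^2 - h_i^2| = |h - h_i|\,|h + h_i| \lesssim L\eps^*$, so the residual term left after truncating the chain at $\eps^*$ is of order $L\eps^*$, and for $d \geq 5$ the small-scale end of the Dudley integral dominates and the shell radius $2^j r_0$ drops out, giving $\E Z_j = \Theta(L^2 m^{-2/d})$ on every shell. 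Since $L^2 m^{-2/d}$ exceeds $r_0 = C(d)L^2 m^{-4/d}$ by a factor of $m^{2/d}$, no choice of the constant $C(d)$ closes the peeling on the innermost shells once $m$ is large. To get an additive error that scales like $(\eps^*)^2$ one must compare $\|h\|_{L^2(\QQ_m)}$ and $\|h\|_{L^2(\QQ)}$ directly (so the $\eps^*$-net error enters linearly at the square-root level and only its square survives after squaring), which is a ratio-type argument with a different structure from the shellwise bound on $\sup_h |(\QQ_m - \QQ)(h^2)|$ that you write down — and that route has its own difficulty, since at $\eps^* \sim L m^{-2/d}$ one already has $\log N(\eps^*) \sim m$, so the Bernstein-plus-union term over the net is of order $L$, not $L m^{-2/d}$. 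In short, the step producing $\E Z_j \lesssim r_0$ is unjustified, and what your machinery actually proves is the weaker statement with $CL^2 m^{-2/d}$ in place of $CL^2 m^{-4/d}$.

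A secondary issue is the ``inessential additive shift'': $h = f - g$ with $f,g \in \cF_L(\Omega')$ is $2L$-Lipschitz but not a priori bounded, and $(\QQ_m - \QQ)(h^2)$ is not shift-invariant, so one cannot simply assume $\|h\|_\infty \leq 4L$. Writing $h = c_h + h_0$ with $\E_\QQ h_0 = 0$ produces a cross term $2 c_h (\QQ_m - \QQ)(h_0)$ which must be absorbed into $\tfrac{1}{2}c_h^2$ via a separate uniform bound on the first-order process $\sup_{h_0}|(\QQ_m - \QQ)(h_0)|$; this is routine but needs to be made explicit, since without it the bound $\QQ(h^2) \leq 16L^2$ that caps the number of shells at $O(\log m)$ is unavailable.
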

    
    Next, we introduce an estimator for the task of improper linear regression with ``sub-Gaussian tail guarantees'',  based on a recent result that is presented in \citep[Prop. 1]{mourtada2021distribution}. Its statistical aspects are proven in the seminal works \citep{tsybakov2003optimal,lugosi2019sub}, and guarantees on its runtime are given in \citep{hopkins2018sub,depersin2019robust,hopkins2020robust}.  \textit{Remarkably}, it would be crucial to our analysis.
    %  for more details see \citep{mourtada2021distribution}.
    
    \begin{lemma}{\label{Lem:LinearReg}}[\citep[Prop. 1]{mourtada2021distribution}]
     Let $m \geq d+1$, $d \geq 1$, $\delta \in (0,1)$ and $Z_1,\ldots,Z_m \underset{i.i.d.}{\sim} \QQ$, where $\QQ$ is supported on $\Omega' \subset B_d$ with \emph{a known} covariance matrix $\Sigma$. Consider the regression model 
     \[
        W = f^{*}(Z) + \xi,
    \]
    % $f^{*}$ is $L$-Lipschitz
     where  $\|f^{*}\|_{\infty} \leq L$.  Then, there exists an estimator that outputs an affine function with runtime of $\tilde{O}(m)$  \[\widehat{f}_{R,\delta}:(\Sigma,\{(Z_i,W_i)\}_{i=1}^{m}) \to \R^{d+1}\] 
     w that satisfies with probability at least $1-\delta$
     \[
         \int (\widehat{f}_{R,\delta}(x) - w^{*}(x))^2d\QQ(x) \leq \frac{C(\sigma+ L)^2(d+\log(1/\delta))}{m},
        %  +  \argmin_{ w \in \R^{d+1}}\int (w^{\top}(x,1)-f^{*}(x))^2d\QQ(x).
    \]
    where $w^{*} = \argmin_{w \text{ affine }}\int (w- f^{*})^2d\QQ$.
    \end{lemma}
    
    %  \[
    %      \int (\widehat{f}_{R,\delta}(x) - f^{*}(x))^2d\QQ(x) \leq \frac{C(\sigma+ L)^2(d+\log(1/\delta))}{m}+ \int (w_{*}^{\top}(x,1)-f^{*}(x))^2d\QQ(x)
    % \]
    
    % We remark that the bound in the last equation is tight, i.e. it is well known that any estimator for linear regression $\bar{w}$, we have that
    % \begin{equation}\label{Eq:tightEq}
    %          \frac{c_1d(\sigma+ L)^2}{m}+ \inf_{ w \in \R^{d+1}}\int (w^{\top}(x,1)-f^{*}(x))^2d\QQ(x) \leq \E \int (\bar{w} - f^{*})^2d\QQ.
    % \end{equation}
    
    % Also, we will need the following result on the (Lipschtitz convex LSE
    % \begin{lemma}
    % \end{lemma}
    % The proof this corollary is just to divide the data in to $C_1(l)\log(m)$ 
    \subsection{Proof of Theorem \ref{Thm:ConvexFunc}}
    The first ingredient in our estimator is our new approximation theorem for convex functions:
    
    %, which holds under our standing Assumption \ref{Ass:One} on $\PP$:
    % , and $X_1',\ldots,X_{4^{-1}k^\frac{d+2}{2}}' \underset{i.i.d.}{\sim} \PP$, 
    \begin{theorem}\label{Cor:Aproximation}
     Let $\Omega \subset B_d$ be a convex set, $f \in \cF_{L}(\Omega)$, and $k \geq (Cd)^{d/2}$, for some large enough $C \geq 0$. Then, there exists a convex set $\Omega_k \subset \Omega$ and a $k$-simplicial convex function $f_k: \Omega_k \to \R$ such that 
    \begin{equation}\label{Eq:support}
    \PP(\Omega \setminus \Omega_{k}) \lesssim d k^{-\frac{2(d+2)}{d(d+1)}}
    \end{equation}
    and 
    \begin{equation}\label{Eq:Error}
        \int_{\Omega_k}(f_k-f)^2d\PP \lesssimd L^2 \cdot k^{-\frac{4}{d}}.
    \end{equation}
    % and 
    % \[
    %     \|f_k-f^{*}\|_{\infty} \leq O_d(Lk^{-\frac{2(d+2)}{(d+1)}}).
    % \]
    %Furthermore,  we also have that
    %    (1-c_d\log(k)^{d-1}k^{-\frac{d+2}{d}})
    %  and the Hausdorff distance $d_{H}(\Omega  ,\Omega_k)$ is bounded by $ O_d(k^{-\frac{d+2}{d}\frac{1}{d}})$.
    \end{theorem}
    
    % \gil{make sure that $\Omega_k$ part is correct}
    % \begin{remark}
    % Note that the approximation error of the above theorem does not depend on $\Omega$. 
    Note that both $\Omega_k$, as well as $f_k$, depend on $f^*$. The bound of  \eqref{Eq:Error} is in fact tight, up to a constant that only depends on $d$, cf. \citep{ludwig2006approximation}. 
    % We remark that the above theorem has a general  
    % We believe that a more careful analysis may remove the redundant $\log(k)^{2/(d+1)}$ factor. 
    % also when $\Omega$ is a polytope the $\log(k)^{2/(d+1)}$ can be removed. %Moreover, we cannot improve the bounds of  \eqref{Eq:support}. 
    Also note that $f_k$ is not necessarily an $L$-Lipschitz function, i.e., it may be an ``improper'' approximation to $f$.   
    For simplicity, we shall assume that $L = \sigma = 1$. Also, since our function is $1$-Lipschitz and $\Omega \subset B_d$, we may also assume that $\|f^{*}\|_{\infty} \leq 1$. Finally, we may and do assume that $\PP = U(\Omega)$, since we can always simulate $\Theta_d(n)$ uniform samples using the method of rejection sampling given samples from any distribution $\PP$ which satisfies Assumption \ref{Ass:One} (cf. \citep{devroye1986nonuniform}).
    % , an therefore will obtain the same rate as $n$ uniform samples
     
    Fix $n \geq (Cd)^{d/2}$, $f^{*} \in \cF_{1}(\Omega)$, and set $k(n) := n^{\frac{d}{d+4}}$. Let $f_{k(n)}: \Omega_{k(n)} \to \mathbb R$ be the convex function whose existence is guaranteed by Theorem \ref{Cor:Aproximation} for $f = f^{*}$. We have
    \begin{equation}\label{Eq:boundOne}
      \int (f^{*}-f_{k(n)})^2 d\PP \lesssimd n^{-\frac{4}{d+4}},
    \end{equation}
    and there exist $\triangle_1,\ldots,\triangle_{k(n)} \subset \Omega$ simplices such that $\left. f_{k(n)}\right|_{\triangle_i}$ is affine on each $i$.
    
    % First, let us assume $f^{*} = f_{k(n)}$, later we will remove this assumption.
    
    If we were given the decomposition of $\Omega$ into pieces on which $f^*$ is near-affine, it would be relatively simple to estimate $f^*$, as we show  in Appendix \ref{App:SimpleEstimator} below. We recommend reading it, to get some intuition for our approach, before attempting the description and correctness proof for our ``full'' estimator below.

    To overcome the fact that we do not know the simplices $\triangle_i$ on which $f_k$ is affine, we need another lemma, which says that if we randomly sample a set of $n$ points $\{X_{n+1}, \ldots, X_{2n}\}$ from $\Omega$, there exists a collection of at most $\Tilde O_{d}(k(n))$ simplices covering ``most'' of $\Omega_k$, such that the vertices of each simplex belong to $\{X_{n+1}, \ldots, X_{2n}\}$ and $f_k$ is affine on each simplex in the collection:
    
    % \begin{lemma}\label{Lem:RandomSimp}
    % 	Let $n \geq C_d$, and $\triangle_1,\ldots,\triangle_{k(n)}$ that are defined above, and  consider $X_{2n+1},\ldots,X_{3n}  \underset{i.i.d.}{\sim} \PP$. Then, with probability of at-least $1-n^{-1}$, there exists a set of disjoint simplices, denoted by $\cS_{X}$ such that:
    % 	\begin{itemize}
    % 		\item The vertices of each simplex in $\cS_{X}$ lie in $\{X_{2n+1},\ldots,X_{3n}\}$.
    % 		\item Any $\tilde{\triangle} \in \cS_{X}$ lie in one of the simplices  $\triangle_1,\ldots,\triangle_{k(n)}$.
    % 		\item For each $ 1 \leq i \leq k(n)$ there are at most $O_d(\ln(n)^{d-1})$ simplices in $\cS_{X}$ that lie in $\triangle_i$, such that measure of their union satisfies $ (1-O_d(\ln(n)^{d-1}n^{-1}))\PP(\triangle_i)$.
    % 	\end{itemize}
    % \end{lemma}
    % Remark \ref{rem:PR} that appears in the previous section, claims that our estimator works only in the random design setting,  is based on the above lemma.
    \begin{lemma}\label{Lem:RandomSimp}
    	Let $n \geq d+1$, and $\triangle_1,\ldots,\triangle_{k(n)}$ that are defined above, and let $X_{n+1},\ldots,X_{2n}  \underset{i.i.d.}{\sim} \PP$. Then, with probability at least $1 - n^{-1}$, there exist $k(n)$ disjoint sets $\cS_{X}^{1},\ldots \cS_{X}^{k(n)}$ of simplices with disjoint interiors such that  
    	\begin{enumerate}
     		\item The vertices of each simplex in $ \bigcup_{i=1}^{k(n)}\cS_{X}^{i}$ lie in $\{X_{n+1},\ldots,X_{2n}\}$. Moreover, for each $ 1\leq i \leq k(n)$, we have that  $|\cS_{X}^{i}| \lesssimd \log(n\PP(\triangle_i))^{d-1}$.
    % 		 Moreover, each $\tilde{\triangle} \in \cS_{X}$ lie in one of the simplices  $\triangle_1,\ldots,\triangle_{k(n)}$.
    % 		\item 
    		\item For each $ 1 \leq i \leq k(n)$, we have that $\bigcup \cS_{X}^{i} \subset \triangle_i$, and 
    % 		$\PP(\triangle_i) \geq C_d\log(n)/n$, where $C_d$ is large enough constant,
    		\[
    		\PP(\bigcup \cS_{X}^{i}) \geq \PP(\triangle_i)-\min\left\{O_d\left(\frac{\log(n)\log(n\PP(\triangle_i))^{d-1}}{n}\right),\PP(\triangle_i)\right\}.
    		\]
    	\end{enumerate}
    \end{lemma}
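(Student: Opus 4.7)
The plan is to analyze each simplex $\triangle_i$ in isolation via the random set of samples that fall inside it, then take a union bound over $i \in [k(n)]$. The key input is a classical stochastic-geometry fact: the convex hull of $m$ i.i.d.\ uniform points in a simplex in $\R^d$ captures almost all of the simplex's volume, with relative error $O_d(\log(m)^{d-1}/m)$, and has only $O_d(\log(m)^{d-1})$ vertices, both in expectation and with strong concentration around these values (Affentranger--Wieacker, B\'ar\'any, Reitzner).

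For each $i$, let $N_i$ denote the number of samples in $\triangle_i$, so that $N_i \sim \mathrm{Binomial}(n, \PP(\triangle_i))$, and, since $\PP = U(\Omega)$, conditional on $N_i = m$ these $m$ samples are i.i.d.\ uniform on $\triangle_i$. I split into two cases. If $n\PP(\triangle_i) \leq C(d)\log(n)^d$ for a suitable constant, the ``loss'' quantity $O_d(\log n \cdot \log(n\PP(\triangle_i))^{d-1}/n)$ already exceeds $\PP(\triangle_i)$, so taking $\cS_X^i = \emptyset$ trivially satisfies both conclusions. Otherwise, a Chernoff bound gives $N_i \geq n\PP(\triangle_i)/2$ with probability at least $1 - \exp(-c \cdot n\PP(\triangle_i))$, which is comfortably larger than $1 - 1/(n\,k(n))$; in particular, $\log(N_i) = \Theta(\log(n\PP(\triangle_i)))$ on this event.

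Conditioning on this event, set $K_i := \conv(\{X_j : X_j \in \triangle_i\}) \subseteq \triangle_i$. Invoking the stochastic-geometry bounds together with concentration (e.g.\ Reitzner's Efron--Stein variance bound followed by Chebyshev, or tail inequalities for intrinsic volumes of random polytopes), with probability at least $1 - 1/(n\,k(n))$ both
\[
\vol(\triangle_i) - \vol(K_i) \lesssim_d \vol(\triangle_i) \cdot \frac{\log(n)\,\log(N_i)^{d-1}}{N_i}
\]
and the number of vertices of $K_i$ is $O_d(\log(N_i)^{d-1})$. Translating volumes into $\PP$-mass and using $N_i \gtrsim n\PP(\triangle_i)$ yields exactly the bound of Part~2. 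I then triangulate $K_i$ using only its own vertices (via a standard pulling/pushing triangulation), producing $\cS_X^i$ whose cardinality is polynomial in the vertex count of $K_i$, hence $O_d(\log(n\PP(\triangle_i))^{d-1})$. Because $\bigcup \cS_X^i = K_i \subseteq \triangle_i$ and the $\triangle_i$ have disjoint interiors, disjointness across different $i$ is automatic, while disjointness within $\cS_X^i$ is part of the triangulation. Union-bounding the failure event over $i$ gives total failure probability at most $k(n)\cdot 1/(n\,k(n)) = 1/n$.

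The main obstacle is upgrading the expectation-level stochastic-geometry estimates to high-probability estimates with failure probability at most $1/(n\,k(n)) = n^{-(2d+4)/(d+4)}$, since a crude Markov bound would blow up the loss by an unacceptable polynomial-in-$n$ factor. This is precisely why the volume bound in Part~2 carries the extra $\log n$ factor: Chebyshev or stronger concentration applied to the variance estimate $\Var(\vol(K_i)) \lesssim_d \vol(\triangle_i)^2\log(N_i)^{d-1}/N_i^3$ (Reitzner) costs one logarithm. The vertex-count estimate, by contrast, has sharper concentration, which is why the bound on $|\cS_X^i|$ does \emph{not} acquire a $\log n$ factor. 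A secondary, purely combinatorial issue is constructing an explicit triangulation of $K_i$ using only its vertices with the claimed size; this is handled by standard polytope-triangulation arguments and the upper bound theorem.
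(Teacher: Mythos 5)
The heart of your plan—restricting to each $\triangle_i$, conditioning on the number $N_i$ of samples that fall inside, and invoking stochastic geometry for random polytopes in a simplex—matches the paper's strategy. However, your proposed mechanism for upgrading expectation bounds to per-simplex failure probability $1/(n\,k(n))$ does not work, and this is precisely the nontrivial step.

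You propose Chebyshev against a variance estimate $\Var(\vol(K_i)) \lesssim_d \vol(\triangle_i)^2 \log(N_i)^{d-1}/N_i^3$ and assert this ``costs one logarithm.'' It does not. With that variance and mean $\mu_i \sim \vol(\triangle_i)\log(N_i)^{d-1}/N_i$, the coefficient of variation is $\sim (N_i \log(N_i)^{d-1})^{-1/2}$, so Chebyshev at confidence level $\delta = 1/(n\,k(n))$ forces a multiplicative deviation of order $\sqrt{n\,k(n)/(N_i \log(N_i)^{d-1})}$, not $\log n$. A typical simplex has $N_i \sim n\PP(\triangle_i) \sim n/k(n) = n^{4/(d+4)}$, while $n\,k(n) = n^{(2d+4)/(d+4)}$, so this deviation factor is polynomial in $n$—exactly the blow-up you correctly say is unacceptable for plain Markov. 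The situation is in fact worse: for polytopes (as opposed to smooth bodies) the Bárány–Reitzner variance scales as $n^{-2}(\log n)^{d-1}$, i.e.\ the coefficient of variation is bounded below by a polylogarithm, so Chebyshev yields essentially nothing beyond a constant-probability statement. And, as the paper explicitly notes, sub-Gaussian tail bounds for $\vol(S\setminus S_m)$ and $f_{d-1}(S_m)$ are \emph{not} known when the host body is a simplex (in contrast to the smooth case), so ``tail inequalities for intrinsic volumes'' cannot be invoked either.

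The paper instead uses a partitioning (median-of-means--type) device (Lemma~\ref{Lem:SimpDecomp}): split the $\sim N_i$ samples in $\triangle_i$ into $\Theta(d\log n)$ disjoint blocks of size $m(n) := N_i/(C_1 d\log n)$, form the convex hull $P_j$ of each block, and apply Markov's inequality \emph{per block} to get both the volume bound and the facet-count bound with probability $\geq 1/3$; because the blocks are independent, at least one succeeds with probability $1 - (2/3)^{C_1 d\log n} \geq 1 - n^{-3d}$. The extra $\log n$ in the final bound comes from using blocks of size $m(n)$ instead of $N_i$ (so $\E\vol(\triangle_i \setminus P_j)$ picks up a factor $N_i/m(n) = \Theta(d\log n)$), \emph{not} from concentration. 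This is where the $\log n$ really comes from, and it is the key idea you are missing.

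Two smaller issues. First, the cutoff $n\PP(\triangle_i) \leq C(d)\log(n)^d$ does not make the loss term trivially exceed $\PP(\triangle_i)$: with $n\PP(\triangle_i) \sim \log(n)^d$ the loss is $\sim \log(n)\,(\log\log n)^{d-1}/n$, which is \emph{smaller} than $\PP(\triangle_i) \sim \log(n)^d/n$. The correct threshold is $n\PP(\triangle_i) \lesssim d\log n$, as in the paper's Lemma~\ref{Lem:Standard}. Second, triangulating $K_i$ ``using only its own vertices'' and appealing to ``polynomial in the vertex count'' (upper bound theorem) would give $O_d((\log N_i)^{(d-1)\lfloor d/2\rfloor})$ simplices, overshooting the claimed $O_d(\log(n\PP(\triangle_i))^{d-1})$. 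The paper avoids this by directly bounding $\E f_{d-1}(S_m) = O_d(\log(m)^{d-1})$ and then doing a star triangulation from an interior sample point, so that the number of $d$-simplices equals the number of facets.
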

    The proof of this lemma appears in \S \ref{App:RandomSimp}.  Essentially, this lemma states that we can triangulate ``most'' of each simplex $\triangle_i$ with ``few'' simplices whose vertices lie among the points $X_{n+1},\ldots,X_{2n}$ which fall in $\triangle_i$, so long as $\triangle_i$ is large enough. From now on, we condition on the high-probability event of Lemma \ref{Lem:RandomSimp}.
    
    Note that if we were given the set of simplices $\mathcal S_{X}:= \bigcup_{i=1}^{k(n)}\cS_{X}^{i}$, we could use the same strategy as in Appendix \ref{App:SimpleEstimator} to obtain a minimax optimal estimator for this task as well. Unfortunately, we do not know how to identify the simplices of $\cS_{X}$, but we do know that they belong to the collection of all simplices with vertices in $\{X_{n + 1}, \ldots, X_{2n}\}$,
     \begin{equation}
         \cS:=\left\{\conv\{X_{n+i}:i \in S\}: S
    \subset [n], |S| = d+1 \right\}.
     \end{equation}
     Note that $|\cS| = O_d(n^{d+1})$, which is polynomial in $n$.
     
    Instead of trying to identify the simplices $\cS_X \subset \cS$ on which $f$ is close to being linear, our algorithm  finds a function $\widehat f$ which, on \emph{every} simplex $\triangle \in \cS$, is ``not much farther'' from the best linear approximation to $f$ on $\triangle$ then $f$ is. Since $f$ itself is close to its best linear approximation on each simplex in $\cS_X$, $\widehat f$ will be close to $f$ on $\bigcup \cS_X$, which is most of $\Omega$.
    
    We restate this a bit more precisely: if $\widehat{f}:\Omega \to \R$ is a convex Lipschitz function such that
    \begin{equation}\label{Eq:ideal_fhat}
        \forall \triangle \in \cS: \   \int (\widehat{f} - w^{*}_{\triangle})^2d\PP_{\triangle} \leq \Tilde{O}_d\left(\int (f^{*} - w^{*}_{\triangle})^2d\PP_{\triangle} + (\PP({\triangle})n)^{-1}\right)
    \end{equation}
    where $w^{*}_{\triangle} = \inf_{w \text{ affine}}\int(f^{*}-w)^2d\PP_{\triangle}$, then $\widehat{f}$ satisfies
    $\int_{\Omega} (\widehat{f}-f^{*})^2d\PP \leq \tilde{O}_d(n^{-\frac{4}{d+4}})$, and the RHS is the minimax optimal rate. (The idea of the proof is to use the fact that $f^*$ is close to $w_\triangle^*$ for each $\triangle \in \cS_X$, along with the triangle inequality, and then sum over all simplices in $\cS_X$; the full justification is given at \eqref{Eq:ftild_approx}-\eqref{Eq:lastEq} below.) In the remainder of this section, we will describe an efficient algorithm which constructs a function $\widehat f$ which comes ``close enough'' to satisfying \eqref{Eq:ideal_fhat} that it manages to attain the minimax optimal rate.
    
    %  theclaims that our estimator works only in the random design setting, is based on the above lemma.
    %  We remark \ref{Rem:PR} in the pr
    % Using Remark \ref{Rem:PowerOfApprox}, we may further assume without loss of generality that under the event of the lemaa the support $f_{k(n)}$ that is defined above, satisfies 
    % Using Lemma \ref{Lem:LoneChaining}, we can further assume
    % Clearly, now $f_{k(n)}$ depends on these data points but does not depend on $\cD$.evious section that
    %  is based on the main result of \citep{dwyer1988convex}, and it 

    % Another lemma that we  ingridiend  follows of Lemma \ref{Lem:LinearReg} for completeness its proof appears in Appendix \ref{Al:One}.
    
 We now begin the description of our algorithm. In the notation of \eqref{Eq:ideal_fhat}, for each simplex $\triangle \in \cS$, we estimate $w^{*}_{\triangle}$ by applying Lemma \ref{Lem:LinearReg}, with the data points of $\cD_1 := \{(X_i,Y_i)\}_{i=1}^{n/2}$ that lie in $\triangle$ as input; denote the regressor we obtain by $\widehat{w}_{\triangle}$.
 
 Next, we shall need to estimate (with high probability) the mean squared error of the regressor on each simplex in $\cS$, i.e.  $\ell_\triangle^2 := \|f^* - \widehat{w}_\triangle\|^2_{L^2(\triangle)}$, up to a polylogarithmic multiplicative factor, using the data points in $\cD_2 = \{(X_i,Y_i)\}_{i=n/2+1}^{n}$ that lie in $\triangle$. Letting $w^*_\triangle$ to be defined as above, we have 
    $$\ell_\triangle^2 = \|w^*_\triangle - \widehat w_\triangle\|^2_{L^2(\triangle)} + \|f^* - w_\triangle^*\|^2_{L^2(\triangle)};$$
    the first term is called the (squared) estimation error and the second is called the (squared) approximation error. 
    By Lemma \ref{Lem:LinearReg}, with probability at least $1-n^{-2d}$ the estimation error will be at most $Cd\log(n)/(\PP(\triangle)n)$, which is no more than a $O(d\log(n))$ factor times the expected estimation error. However, $f^{*}$ may not be affine on $\triangle$, and the squared approximation error may be significantly larger than the squared estimation error. When this occurs, the estimation of $\ell_\triangle^2$ by  noisy samples is challenging, even in the (unrealistic) setting of sub-Gaussian noise with known variance $\sigma^2$. Indeed, it would be natural to estimate the approximation error by the (centered) empirical mean of the squared loss, namely
    \[
        \frac{1}{\PP(\triangle)n}\sum_{(X,Y) \in \cD_2 , X \in \triangle}(Y - \widehat{w}_{\triangle}(X))^2 - \sigma^2.
    \]
     However, the additive deviation of this estimate is of order $\Omega_d(n_\triangle^{-\frac{1}{2}})$, where $n_\triangle \approx \PP(\triangle)n$ is the number of data points falling in $\triangle$, and therefore when $\ell_\triangle^2$ is in the range 
    % that is essential for our algorithm to succeed.
    $[ O_d(n_\triangle^{-1}),\Omega_d(n_\triangle^{-\frac{1}{2}})]$
    we will not be able to estimate $\ell_{\triangle}^2$ even \emph{up to a multiplicative constant}, which is what our algorithm requires in order to succeed.
    % on $\triangle$, i.e.  we  need to estimate , where $w^*$ is the best affine approximation to $f^{*}$. 
    % By orthogonality, we know that
    % $
    %     \int (w^{*}-f^{*})^2 d\PP_{\triangle} \leq \int (\widehat{w}_{\triangle}-f^{*})^2 d\PP_{\triangle}, 
    % $
    % and moreover, by Lemma \ref{Lem:LinearReg} and the orthogonality  we also have 
    % \[
    %      \int (\widehat{w}_{\triangle}-f^{*})^2 d\PP_{\triangle} 
    %     \leq \int (w^{*}-f^{*})^2 d\PP_{\triangle} + Cd\log(n)/(\PP(\triangle)n). Therefore, up to an additive deviation of $\tilde{O}_d(1/(\PP(\triangle)n)))$, one may estimate $\int (\widehat{w}_{\triangle}-f^{*})^2 d\PP_{\triangle}$. 
    % \]
    % that is the $L_2^2(\PP_{\triangle})$-norm of the \emph{convex} function $f^*-\widehat{w}_{\triangle}$,
    
    To overcome this problem necessitates constructing a new efficient procedure that uses the data points of $\cD_2$ that fall in $\triangle$ to estimate $\ell_{\triangle}^2$ up to a multiplicative constant with an \emph{additive} deviation of $\tilde O_d((\PP(\triangle) n)^{-1})$. Using the convexity of $f^*-\widehat{w}_{\triangle}$ and techniques from potential theory and stochastic geometry, we show that such a procedure is indeed possible.
      % and furthermore this estimate has to to valid  with probability of at least $1- n^{-2d}$.
    %The formulation of the next lemma is adjusted to the proof of Theorem \ref{Thm:ConvexFunc}.
    
    \begin{lemma}\label{Lem:VerMSE}
    Let $\triangle \subset \Omega$ and let $g: \triangle \to [-2L,2L]$ be a convex function. Then, there exists an estimator $\widehat{f}_{E,\delta(n)}$ (that uses that points of $\cD_2$) with a runtime $O_d(n^{O(1)})$ that satisfies     with probability at least $1 - n^{-2d}$
    \begin{align*} 
     \|g\|_{L_2(\triangle)}^2 &\le  \widehat f_{E, \delta} \lesssimd \log(L)\log(n)^{2d-1}\|g\|^2_{L_2(\triangle)}
    \end{align*}
    when $\|g\|_{L_2(\triangle)}^2 \gtrsim (L + \sigma)^2 \frac{\log(2/\delta) }{\PP(\triangle)n}$.
    %  $w_b = \argmin_{w \text{ affine }} \|g-w_{b}\|_{2}$, and
    \end{lemma}
    \noindent{We remark that our estimator requires an upper bound on $L$ and $\sigma$ (up to multiplicative constants that only depend on $d$). Both can be found using standard methods when $L= \Theta(\sigma) =\Theta(1)$.}  
    We denote the output of the estimator of Lemma \ref{Lem:VerMSE} for $g = f^*-\widehat{w}_{\triangle}$ by $\widehat{\ell}^2_{\triangle}$. Given our regressors $\widehat w_\triangle$ and squared error estimates $\widehat \ell^2_\triangle$, we proceed to solve the quadratic program which encodes the conditions $\|\tilde f - \widehat{w}_{\triangle}\|^2_{L^2(\triangle)} \le \widehat \ell^2_\triangle$ for all simplices with large enough volume. (We rely on the fact that the $L^2$-norm on each simplex can be approximated by the empirical $L^2$-norm, again using Lemma \ref{Lem:Bro}.) This program is feasible, since $f^*$ itself is a solution. $\tilde f$ is close to $f^*$ on every simplex in our collection and in particular on the simplices restricted to which $f^*$ is near-affine (which we don't know how to identify), which allows us to conclude that 
    \[
    \int_{\tilde\Omega_{k(n)}}(\tilde{f} - f^{*})^2d\PP \leq \tilde{O}_d(n^{-\frac{4}{d+4}})
    \]
    with high probability, where $\tilde \Omega_{k(n)}$ is the union of the simplices in Lemma \ref{Lem:RandomSimp}.
    
    %  To solve this problem, we use the fact that $\Omega_{k(n)}$ contains another random set we construct, $\Omega_X$, with high probability. We simply run the convex LSE on $\Omega \setminus \Omega_{X}$, where $\tilde{k}(n) = c_d(k(n)/\log(k(n)))^{\frac{(d+2)}{d}}$; this is sufficient because $\Omega \backslash \Omega_X$ has small volume (see Lemma \ref{Lem:UseTheFloaingBody} below).
    
    So we have constructed a function $\tilde f$ which closely approximates $f^*$ on $\tilde\Omega_{k(n)}$. $\tilde \Omega_{k(n)}$ is not known to us, but as we shall see, $\Omega \backslash \tilde \Omega_{k(n)}$ has asymptotically negligible volume, so the function $\min\{\tilde{f}, 1\}$ turns out to be a minimax optimal improper estimator (up to logarithmic factors) of $f^*$ on all of $\Omega$. In order to transform this improper estimator to a proper estimator, i.e., one whose output is a convex $1$-Lipschitz function, we use a standard procedure (denoted by $MP$), as described in Appendix \ref{App:MP} below. This concludes the sketch of our algorithm.
    
    % In addition, on the points the lie in $\Omega_{X}^{c}$, we will apply the convex LSE
    
    %On the set $\Omega_{X}$, we find a convex function has the same estimated MSEs over all the simplices in $\cS$ as their corresponding regressors that we found earlier. 
    
     %that will let us have a ``proper" MSE up to a $Cd\log(n)$ factor. 
    % Now, we know that the underlying  $f^{}$

    %  For each $X_{n+i} \in A$ choose the simplex from  $\cS$ that contains it that has the lowest MSE, and use its regressor to assign a value to $\tilde{f}(X_{n+i})$. For the remaining points, i.e. $X_{n+i} \notin A$  will be handled in a somehow similar fashion as the points of $\Omega \setminus \Omega_{k(n)}$ in Step I (see  \eqref{Eq:Boundary}).  We will show that that such an $\tilde{f}$ satisfies the equation above.
    % ********************
    % function is close to be affine on each one them based 
    % using the powerful estimator of Lemma \ref{Lem:LinearReg}. 
    % Since, by the previous lemma we know that there is a set of ``good" simplices that contains most o
    Pseudocode for the algorithm is given in Algorithm~\ref{alg:one} below. In its formulation, note that the procedure $\widehat f_{R, \delta(n)}$ is described in Lemma~\ref{Lem:LinearReg}, $\widehat f_{E, \delta(n)}$ is described in Lemma~\ref{Lem:VerMSE}, and $MP$ is described in Appendix~\ref{App:MP}. 
    
    \begin{algorithm}[H]
    \caption{ Minimax Optimal For $L$-Lipschitz Multivariate Convex Regression}\label{alg:one}
    \begin{algorithmic}
     \Require $\cD = \{(X_i,Y_i)\}_{i=1}^{n} = \cD_1 \cup \cD_2$
    \Ensure  A random $ \widehat{f}_{L} \in \cF_{L}(\Omega)$ s.t. w.h.p. $\|\widehat{f}_{L} - f^{*} \|_{\PP}^2 \leq  \tilde{O}_d((L+\sigma)^2n^{-\frac{4}{d+4}})$.  
    \State Draw $X_{n+1},\ldots,X_{2n} \underset{i.i.d.}{\sim} \PP$
    \State $\cS \gets \left\{\conv\{X_{n+i}:i \in S\}: S
    \subset [n], |S| = d+1 \right\}$
    \State \textcolor{blue}{Part I:} 
   
    % \State $\cS_2 \gets \emptyset$
    % \State Sort the simplices of $\cS$ in descending way w.r.t. by their $d$-volume.
    % \State For the simplices that their volume is less than $Cd\log(n)$.
    % $\triangle_1,\ldots, \triangle_{|\cS|}$ (sorted )
    \State $\delta(n) \gets n^{-(d + 2)}$
 
    % \State $\cS_2 \gets \emptyset$
    % \State Sort the simplices of $\cS$ in descending way w.r.t. by their $d$-volume.
    % \State For the simplices that their volume is less than $Cd\log(n)$.
    % $\triangle_1,\ldots, \triangle_{|\cS|}$ (sorted )
    % \State
   \For{$\triangle_1,\ldots,\triangle_i,\ldots \in \cS$} 
    
    % \Comment{Keep the simplices that $f^{*}$ is affine}
    
        % \State Denote by $\cD_i^{1} := \{ (X,Y) \in \cD^1: X \in \triangle_i \}$.
        % \State Let $l_i^2$ to be the output of the procedure $\widehat{f}_{E}$ using the with the date-points $\cD_i$.
         \State $\widehat{w}_i  \gets \widehat{f}_{R,\delta(n)}(\{ (X,Y) \in \cD^1: X \in \triangle_i \})$.
         \State $\widehat\ell_i \gets \min(4, \widehat{f}_{E,\delta(n)}(\{(X,Y-\widehat{w}_i(X)): (X,Y) \in \cD^2,X \in \triangle_i\}))$
        %  \State Let $\widehat{\ell}_i^2 = \widehat{f}_{E,\delta(n)}^2 + Cd\log(n)(L + \sigma)^2/(\PP(\triangle_i)n) $.
        %  \vspace{2mm}
        %  \State Set $\widehat{l}_i^2 \gets \widehat a_i^2  + (L+\sigma)^2 d\log(n)/(\PP(\triangle_i)n)$.
        %  \vspace{2mm}
    %    \State Replace entry of $\triangle_i$ by $(\triangle_i,l_i^2,\widehat{w}_i)$.
        % \State
   \EndFor
    % \State For the remaining simplices that their volume is less than $Cd\log(n)/n$ update  their entry in $\cS$ to $(S,1,\vec{0})$.
     
    % \State Draw $Z_1 \sim U(S_{1}),\ldots ,Z_i \sim U(S_{i}),\ldots,Z_{\binom{n}{d+1}} \sim U(S_{\binom{n}{d+1}})$
\State \textcolor{blue}{Part II:}
    \For{$ i \in 1,\ldots |\cS|$}
    
    % \If{$X_i \in A$}
    % {
    % \Comment{Assign values to the data points $X_1,\ldots,X_n$}
    \State Draw $Z_{i,1},\ldots,Z_{i,n} \sim \PP_{\triangle_i}$
    % \State $\cS_i \gets \{(\triangle,\widehat{w},l^2) \in \cS:  X_{n+i} \in \triangle  \}$ 
    % \State $\cS_i^{2} \gets \{(\triangle,\widehat{w},l^2+\|X_{n+i}-Z_{I(\triangle)}\|^2): (\triangle,\widehat{w},l^2) \in \cS,X_{n+i} \notin \triangle\}$
    % \State Set $\tilde{w}_i$ to be the regressor of the  lowest  third entry in $\cS_i$.
    \State Define an inequality constraint $I_j:= \frac{1}{n}\sum_{j=1}^{n}(f(Z_{i,j})-\widehat{w}_i^{\top}(Z_{i,j},1))^2 \leq \widehat{\ell}_i^2 + CL^2\sqrt{\frac{d\log(n)}{n}}$.
    % }
    % \Else
    % {
    % $\tilde{f}(X_{n+i}):= \widehat{f}_{A^c}(X_{n+i})$.
    % }
    \EndFor
    % \For{$ i \in 1,\ldots n$ }
    % {
    % \State Update the regressor $\widehat{w}_{n+i}$ to  $\widehat{w}_{\argmin_{1 \leq j \leq n}\tilde{l}_j^2 + \|X_{n+j}-X_{n+i}\|}$.
    % \State Set $\tilde{f}(X_{n+i}) := \widehat{w}_{n+i}^{\top}(X_{n+i},1)$
    % }
    % \State
    \State Construct $\tilde{f} \in \cF_{L}(\Omega)$ satisfying the constraints $I_1,I_2\ldots,I_{|S|}$ (cf. Eqs. \eqref{Eq:ConvCons1}-\eqref{Eq:ConvCons3})
    % \State Let $\widehat{f}_{c}$ be the convex LSE with the input $\{ (X,Y) : (X,Y) \in \cD, X \notin \Omega_{X} \}$.
    %  \State \textcolor{blue}{When $\Omega$ is a polytope: } \Return  $MP(\min\{\tilde{f},L\})$.
    %  \State \textcolor{blue}{Handling the boundary:} 
    %  \State \textcolor{red}{Under our assumptions on $\Omega$ we just Return }
    \State 
     \Return  $MP(\min\{\tilde{f},L\})$,
        % \State
       
        % \State Obtain the regressor $(\bar{w}_i,b_i)$ and  $l_i^2 \gets \frac{1}{|\cD_{i}|}\sum_{j=1}^{|\cD_{i}|}(Y_j - \bar{w_i}^{\top}(X_i,1))^2 -\sigma^2$ 
        
        % \If{$l_i^2 \geq \frac{2dL}{n\vol(\triangle_i)}$  or $\vol(\triangle_i) \leq (3dL)/n$}
        % {
        % \State $\cS \gets \cS \setminus 
        % \{(\triangle_i,w_i,l_i^2)\}$. 
        % }
    % \State Remove all the entries from $\cS$ that have a $l \geq c\log(n)^{-1}$. 
    % \State Draw $X_{n+1},\ldots,X_{2n} \underset{i.i.d.}{\sim} \PP$.
    % \State
    % \For{$ i \in 1,\ldots n$ }
    % {
    % % \Comment{Assign values to the data points $X_1,\ldots,X_n$}
    % \State $\cS_i \gets \{(\triangle,\widehat{w},l^2) \in \cS:  X_{n+i} \in int(\triangle)  \}$
    % \State Set $(\widehat{w}_{n+i},\tilde{l_{i}}^2)$ to be the regressor of the  lowest $l^2$ in $\cS_i$.
    % }
    % \State
    % \For{$ i \in 1,\ldots n$ }
    % {
    % \State Update the regressor $\widehat{w}_{n+i}$ to  $\widehat{w}_{\argmin_{1 \leq j \leq n}\tilde{l}_j^2 + L\|X_{n+j}-X_{n+i}\|}$.
    % \State Set $\tilde{f}(X_{n+i}) := \widehat{w}_{n+i}^{\top}(X_{n+i},1)$
    % }
    % \State
    % \Return Convex LSE with input $\{(X_{n+1},Y_{n+1}),\ldots,(X_{2n},Y_{2n})\}$. 
    \end{algorithmic}
    \end{algorithm}
    We now turn to the proof that Algorithm~\ref{alg:one} succeeds with high probability. In the analysis, we assume for simplicity that $L =\sigma = 1$. Let $\mathcal S$ be as defined in  Algorithm~\ref{alg:one}, and let 
    \[
    \cS^{T}:=\{\triangle: \triangle \in  \cS, \int_{\triangle} g^2d\PP \geq Cd\log(n)^2/n\},
    \]
    for some sufficiently large $C \geq 0$. In particular, we have $\PP(S) \geq C_1(C)d\log(n)/n$ for all $S \in \cS^{T}$. We first note that our samples may be assumed to be close to uniformly distributed on the simplices in $\cS^T$. Indeed, by standard concentration bounds, we have
    \begin{equation}\label{Eq:Sets}
        \forall \triangle \in \cS^T, j \in \{1, 2, 3\}:\quad \frac{1}{2} \leq \frac{\PP_n^{(j)}(\triangle)}{\PP(\triangle)} \leq 2,
    \end{equation}
    with probability $1 - 3 n^{-3d}$,  where $\PP_n^{(1)} = \frac{2}{n} \sum_{i = 1}^{n/2} \delta_{X_i}$, $\PP_n^{(2)} =  \frac{2}{n} \sum_{i = n/2 + 1}^{n} \delta_{X_i}$ and $\PP_n^{(3)} =  \frac{1}{n} \sum_{i = n + 1}^{2n} \delta_{X_i}$ (see Lemma \ref{Lem:Standard} in \S \ref{App:RandomSimp}). From now on, we condition on the intersection of the events of \eqref{Eq:Sets} and Lemma \ref{Lem:RandomSimp}. 
    The first step in the algorithm is to apply the estimator of Lemma \ref{Lem:LinearReg} for each $\triangle_i \in \cS^T$ with $\QQ := \PP_{\triangle_i}$, and $\delta = n^{-(d+2)}$, using those points among of $\cD_1$ that fall in $\triangle_i$. (By the preceding paragraph, under our conditioning, we may assume $\PP(\triangle_i) n$ of the points in $X_1, \ldots, X_{\frac{n}{2}}$ fall in each $\triangle_i$, up to absolute constants. We will silently use the same argument several more times below.) By the lemma and a union bound, we know that the following event has probability at least $1-n^{-1}$:
    % \widehat{w}_1,\ldots,\widehat{w}_i,\ldots,\widehat{w}_{|\cS^{T}|
    \begin{equation}\label{Eq:Regi}
    \begin{aligned}
    % \leq l_j^2 
    %  ($ 1 \leq j \leq \binom{n}{d+1}$)
           \forall 1 \leq i \leq |\cS^{T}| \ \ \int_{\triangle_i} (\widehat{w}_i - f^*)^2 d\PP_{\triangle_i} &\leq  \frac{2Cd\log(n)}{\PP(\triangle_i)n} + \int_{\triangle_i} (w_{i}^* - f^*)^2 d\PP_{\triangle_i} ,
        %   \int_{\triangle_i}( - f^{*})^2d\frac{\PP}{\PP(\triangle_i)} + \frac{Cd\log(n)}{\PP_n(\triangle_i)n} \\&\leq
        % \int_{\triangle_i}(w_{i,*}^{\top}(x,1) - f^{*})^2d\frac{\PP}{\PP(\triangle_i)} +
    \end{aligned}
    \end{equation}
    where $w_{i}^* = \argmin_{\text{$w$ affine}}\int_{\triangle_i}(w(x) - f^{*})^2d\PP_{\triangle_i}$. We condition also on the event of \eqref{Eq:Regi}. Next, we apply Lemma \ref{Lem:VerMSE} (with $\delta = n^{-(d+2)}$) on each $\triangle_i$, with $g = f^{*} - \widehat{w}_i$, and using those points among of $\cD_2$ that fall in $\triangle_i$, and obtain that
    \begin{equation}\label{Eq:Condition}
    \forall 1 \leq i \leq |\cS|: \ \ \int_{\triangle_i} (\widehat{w}_i - f^{*})^2 d\PP_{\triangle_i} \leq \widehat{\ell}_i^2,
    \end{equation}
    with $\widehat \ell_i^2$ as defined in Algorithm \ref{alg:one}. Note that for $\triangle \in \cS \setminus \cS^{T}$, taking $\widehat w_i = 0$ suffices, since $f^*$ is bounded by $1$, the loss is bounded by $4$. 
    % \begin{lemma}
    Finally, we further condition on the event of the last equation.

    We proceed to explain and analyze Part II of Algorithm \ref{alg:one}. We first claim that conditioned on \eqref{Eq:Condition}, the function $f^{*}$ satisfies the constraints $I_1, I_2, \ldots$ defined in the algorithm  with probability at least $1 - n^{-1}$. Indeed, for each $ 1 \leq i \leq |\cS|$, $\|(f^{*}- \widehat{w}_i)^2\|_{L^\infty(\triangle_i)} \leq 4$, so by Hoeffding's inequality and \eqref{Eq:Condition} we know that with probability at least $1-n^{-(d+2)}$, we have that
    \begin{equation}\label{Eq:hoeff}
    \begin{aligned}
    % ))^2 \int(f-\widehat{w}_i^{\top}(z,1))^2 d\QQ_{k(n)}
        \frac{1}{n}\sum_{j=1}^{n}(f^{*}(Z_{i,j})-\widehat{w}_i(Z_{i,j}))^2 &\leq \int_{\triangle_i} (f^{*}-\widehat{w}_i)^2 d\PP_{\triangle_i} +\sqrt{\frac{ Cd\log(n)}{n}} \\&\leq \widehat{\ell}_i^2 +  \sqrt{\frac{Cd\log(n)}{n}}.
    \end{aligned}
    \end{equation}
    Taking a union over $i$, we know that \eqref{Eq:hoeff} holds for all $i$ with probability at least $1 - n^{-1}$.
    
    We also note (for later use) that applying Lemma \ref{Lem:Bro} to the measures $\PP_{\triangle_i}$ and using a union bound, it holds with probability at least $1 - C n^d e^{-c\sqrt n}$ that for all $i$, the empirical measure $\PP_{\triangle_i, n} = \frac{1}{n} \sum_{j = 1}^n \delta_{Z_{i, j}}$ on $\triangle_i$ approximates $\PP_{\triangle_i}$ in the sense of \eqref{Eq:Bro}. We condition on the intersection of these two events as well. 
    
    % \begin{equation}
    % \begin{aligned}
    % & \quad \quad \quad \quad \quad \quad \quad \quad \quad \quad \min_{f(Z_{i,j}),\nabla f(Z_{i,j})} \quad 1 \\&
    % \quad \textrm{s.t.} \quad \forall \  1\leq i \leq |\cS^{T}| \ \  \frac{1}{k(n)}\sum_{j=1}^{k(n)}(f(Z_{i,j})-\widehat{w}_iZ_{i,j})^2 \leq \widehat{l}_i^2 + \frac{CL^2d\log(n)}{\sqrt{k(n)}}
    % \\& \quad \quad \quad  \forall (i_1,j_1),(i_2,j_2) \in ([|\cS^{T}|],[k(n)]) \quad  |f(Z_{i_1,j_1}) - f(Z_{i_2,j_2})| \leq L\| Z_{i_1,j_1} -Z_{i_2,j_2} \|
    % \\& \quad \quad \quad  \forall (i_1,j_1),(i_2,j_2) \in ([|\cS^{T}|],[k(n)]) \quad f(Z_{i_2,j_2}) \geq \nabla f(Z_{i_1,j_1})^{\top}(Z_{i_2,j_2}-Z_{i_1,j_1})
    % \end{aligned}
    % \end{equation}
    
    We now explain how to algorithmically construct $\tilde f \in \mathcal F_L(\Omega)$ satisfying all the constraints $I_j$. The idea is to mimic the computation of the convex LSE \citep{seijo2011nonparametric}, by considering the values of the unknown function $y_{i,j} = \tilde f(Z_{i,j})$ and the subgradients $\xi_{i,j} \in \partial f(Z_{i, j})$ at each $Z_{i,j}$ as variables. More precisely, we search for $y_{i,j} \in \mathbb R$ and $\xi_{i,j} \in \mathbb R^d$ satisfying the following set of constraints (here $L=1$):
    \begin{align}%\label{Eq:ConvCons}
    \forall i \le |\cS|: \qquad & \frac{1}{n}\sum_{j=1}^{n}(y_{i,j}-\widehat{w}_i(Z_{i,j}))^2 \leq \widehat{\ell}_i^2 + \sqrt{\frac{Cd\log(n)}{n}} \label{Eq:ConvCons1}
    \\ 
    \forall (i, j) \in [|\cS|] \times [n]: \qquad & \|\xi_{i,j}\|^2 \le L^2 \label{Eq:ConvCons2}
    %\\& \forall (i_1,j_1),(i_2,j_2) \in [|\cS|] \times [n] \quad  |y_{i_1,j_1}) - y_{i_2,j_2}| \leq L\| Z_{i_1,j_1} -Z_{i_2,j_2} \|
    \\   \forall (i_1,j_1),(i_2,j_2) \in [|\cS|] \times [n]: \qquad & y_{i_2,j_2} \geq \langle \xi_{i_1, j_1}, Z_{i_2,j_2}-Z_{i_1,j_1} + y_{i_1, j_1} \rangle.
    \label{Eq:ConvCons3}
    \end{align}
    
    For any feasible solution $(y_{i,j}, \xi_{i,j})_{i, j}$ of \eqref{Eq:ConvCons1}-\eqref{Eq:ConvCons3}, define the affine functions $a_{i, j}(x) = y_{i,j} + \langle \xi_{i, j}, x - Z_{i, j}\rangle$. We claim that the function $\tilde f = \max_{i, j} a_{i, j}$ is a $1$-Lipschitz convex function which satisfies the constraints $I_j$. Indeed, \eqref{Eq:ConvCons3} guarantees that $\tilde f(Z_{i,j}) = y_{i,j}$ for each $i$, so the $I_j$ are satisfied due to \eqref{Eq:ConvCons1}; moreover, the $a_{i,j}$ are convex and $1$-Lipschitz (the latter because of \eqref{Eq:ConvCons2}), so $\tilde f$ is convex as a maximum of convex functions and $1$-Lipschitz as a maximum of $1$-Lipschitz functions. %(To prove the latter assertion, for any two points $x, y \in \Omega$ take the line segment between $x$ and $y$ and break it up into segments $L_k = (x_k, x_{k + 1})$ such that $\tilde f(x)$ coincides with one of the $w_{i, j}$ on all of $L_k$; then $|f(x_{k + 1}) - f(x_k)| \le |x_{k + 1} - x_k|$. Summing over $k$ gives the desired inequality $|f(y) - f(x)| \le |y - x|$.) 
    
    Conditioned on \eqref{Eq:hoeff} there exists a feasible solution to the problem \eqref{Eq:ConvCons1}-\eqref{Eq:ConvCons3}, namely that obtained by taking $y_{i,j} = f^*(Z_{i, j})$, $\xi_{i,j} \in \partial f^*(Z_{i,j})$ (where $\partial f(x)$ denotes the subgradient set of a convex function $f$ at the point $x$). Moreover, the constraints in \eqref{Eq:ConvCons1}-\eqref{Eq:ConvCons3} are either linear or convex and quadratic in $f(Z_{i,j}), u_{i, j}$, and hence the problem can be solved efficiently. For instance, it can be expressed as a second-order cone program (SOCP) with $n^{2d + 2}$ variables and constraints, which can be solved in time $n^{O(d)}$ (see, e.g., \cite{ben2001lectures}).
    
    % Denote the solution of this SOCP problem by $\tilde{f}$.
    % Now, note that the sets in $\cS_{X}$ are disjoint (up a measure zero), we know that by the definition of $A$ that for each set $\tilde{\triangle} \in \cS_{X}$ that  $\PP( \tilde{\triangle} \cap A) \in \{0,\PP( \tilde{\triangle})\}$. Therefore, we conclude that for all ****.
    Next, recall that under our conditions on the $Z_{i, j}$, we have for each $i$ that
    \begin{equation}\label{Eq:Bro_ftild}
        \int_{\triangle_i} (\tilde{f}(x) - f^*(x))^2 d\PP_{\triangle_i} \leq \frac{1}{n}\sum_{j = 1}^n (\tilde{f}(Z_{i,j}) - f^*(Z_{i, j}))^2 + C n^{-\frac{4}{d}},
    \end{equation}
    since both $f^*$ and $\tilde f$ lie in $\mathcal F_1(\Omega)$. 
    Recall also that under our conditioning, for each $i$, the constraint
    $$\frac{1}{n}\sum_{j=1}^{n}(y_{i,j}-\widehat{w}_i(Z_{i,j}))^2 \leq \widehat{\ell}_i^2 + \sqrt{\frac{Cd\log(n)}{n}}$$
    holds whether we take $y_{i, j} = \tilde f(Z_{i, j})$ or $y_{i, j} = f^*(Z_{i, j})$. Using this bound along with the inequality $(f^* - \tilde f)^2 \le 2 (\tilde f - \widehat w_i)^2 + 2(\widehat w_i - f^*)^2$ in \eqref{Eq:Bro_ftild}, we obtain
    \begin{equation}\label{Eq:ftild_approx}
        \int_{\triangle_i} (\tilde{f}(x) - f^*(x))^2 d\PP_{\triangle_i} \leq 2 \widehat \ell_i^2 + \sqrt{\frac{Cd\log(n)}{n}} + C n^{-\frac{4}{d}} \le 2\widehat \ell_i^2 + C' n^{-\frac{4}{d+4}},
    \end{equation}
    where we used our assumption of $d \geq 5$. 
    Now, recalling that $\ell_i^2$ denotes the LSE error $\|f^* - \widehat{w}_i\|^2_{L^2(\triangle_i)}$, which is bounded by \eqref{Eq:Regi}, we have 
    \[
    \widehat \ell_i^2 \lesssimd \log(n)^{3d}\ell_i^2 \lesssimd \log(n)^{2d-1}\left(\|f^* - w_i^*\|_{L^2(\triangle_i)}^2+ \frac{C(d)\log n}{\PP(\triangle_i) n}\right).
    % C_d \log(n)^{2d-1}\left(a_i^2 + \frac{C(d)\log n}{\PP(\triangle_i) n}\right) =
    \]
    % and that by the upper bound in Lemma \ref{Lem:VerMSE}, 
    % $$\widehat a_i \le C_d' \|f^* - w_i^*\|_{L^2(\triangle_i)} + \left(\frac{Cd\log n}{\PP(\triangle_i) n}\right)^{\frac{1}{2}},$$
    % by same computations as we used above in deriving. 
    %Again, using the inequality $(a + b)^2 \le 2a^2 + 2b^2$ and 
    Substituting in \eqref{Eq:ftild_approx}, we obtain for any $\triangle_i$ that 
    \begin{equation}\label{Eq:ftild_li}
        \int_{\triangle_i} (\tilde{f}(x) - f^*(x))^2 d\PP_{\triangle_i} \lesssimd \log(n)^{2d-1} \|f^* - w_i^*\|_{L^2(\triangle_i)}^2 + \frac{\log (n)^{2d}}{\PP(\triangle_i) n} +n^{-\frac{4}{d+4}}.
    \end{equation}

    %along with \eqref{Eq:Condition} and the definition of $\widehat l_i^2$, we obtain that
    %\begin{equation}\label{Eq:ftild_approx}
    %    \E [\int_{\triangle_i} (\tilde{f} - f^{*})^2 d\PP  |\cE_1] \leq 4 \inf_{w \in \R^{d+1}}\int_{\triangle_i} (f^{*}- w^{\top}(x,1))^2 d\PP +  \frac{C_1d\log(n)}{n} +  C_3d \cdot \PP(\triangle_i)n^{-\frac{4}{d+4}}.
    %\end{equation}
    
    Now recall that $f_{k(n)}$ is our $k(n)$-simplicial approximation to $f$, and that $\left.f_{k(n)}\right|_{S_{i, j}}$ is affine for each $i$ and $S_{i, j} \in \cS^i_X$, where the sets $\cS_{X}^{i}$ are defined in Lemma \ref{Lem:RandomSimp}. Define $\tilde{\Omega}_{k(n)} := \bigcup\bigcup_{i=1}^{k(n)}\cS_{X}^{i}$. Recall that by definition,
    $\|f^* - w_m^*\|^2 = \inf_{\text{$w$ affine}} \|f^* - w\|^2_{L^2(S_m)}$ for any $S_m \in \cS$, in particular for those $S_m$ which belong to one of the $\cS^i_X$. Hence, multiplying \eqref{Eq:ftild_li} by $\PP(\triangle_i)$ and summing over all the $\triangle_i$ belonging to any of the $\cS^i_X$, we obtain
    \begin{equation}
        \begin{aligned}
            \int_{\tilde{\Omega}_{k(n)}} (\tilde{f} - f^{*})^2 d\PP &\lesssimd \sum_{\tilde\triangle \in \bigcup_{i=1}^{k(n)}\cS_{X}^{i}}\left( \log(n)^{2d-1}\inf_{\text{$w$ affine}}\int_{\tilde \triangle} (f^{*}- w)^2 d\PP + \frac{\log(n)^{2d}}{n} + \PP(\tilde\triangle)n^{-\frac{4}{d}}\right)
            \\&\lesssimd \log(n)^{2d-1}\int_{\tilde{\Omega}_{k(n)}} (f_{k(n)} - f^{*})^2d\PP + \log(n)^{3d}n^{-\frac{4}{d+4}} \\
            &\lesssimd n^{-\frac{4}{d+4}}\log(n)^{3d},
        \end{aligned}
    \end{equation}
    % \[  
    %      \E [\int_{\tilde{\triangle}} (\tilde{f} - f^{*})^2 d\PP  |\cE] \leq  4\inf_{w \in \R^{d+1}} \int_{\tilde{\triangle}}(f^{*}(x) - \widehat{w}_i^{\top}(x,1))^2d\PP + \frac{C_1d\log(n)}{n} + 2\PP(\tilde{\triangle})n^{-\frac{4}{d+4}}).
    % \]
    where we used the fact that the cardinality of $\bigcup_{i=1}^{k(n)}\cS_{X}^{i}$ is bounded order of $n^{\frac{d}{d+4}}\log(n)^{d}$, the disjointness of all the simplices comprising $\tilde\Omega_{k(n)}$, and finally the definition of $f_{k(n)}$. Next, it is not hard to show that $\|\tilde f\|_{\infty} \leq C$ (simply because $\tilde f$ is $1$-Lipschitz, $\Omega$ is contained in the unit ball, and $\int_{\Omega_{k(n)}}(\tilde{f} - f^{*})^2 \leq 4$). Thus, we obtain that  
    \begin{align*}
         \int_{\Omega_{k(n)} \setminus \tilde{\Omega}_{k(n)}} (\tilde{f} - f^{*})^2 d\PP &\leq \|\tilde{f}\|_{\infty}\PP(\Omega_{k(n)} \setminus \tilde{\Omega}_{k(n)}) \lesssim \sum_{i=1}^{k(n)}\PP(\triangle_i \setminus \bigcup \cS_{X}^{i})\lesssimd n^{-\frac{4}{d+4}}\log(n)^{d}, 
        %  \|O_d(L^2n^{-\frac{4}{d+4}}\log(n)^{d}),
    \end{align*}
    where we used part (2) of Lemma \ref{Lem:RandomSimp}.  Combining the last two equations, we obtain that 
    \begin{equation}\label{Eq:lastEq}
        \int_{\Omega_{k(n)}}(\tilde{f} - f^{*})^2 d\PP \lesssimd n^{-\frac{4}{d+4}}\log(n)^{3d}.
    \end{equation}
    Finally, since $\PP(\Omega \setminus \Omega_{k(n)}) \lesssimd n^{-\frac{4}{d+4}}$, we can estimate $f^*$ simply by $1$ on $\Omega \backslash \Omega_{k(n)}$  to obtain a minimax optimal estimator on all of $\Omega$ (recall that this is an improper estimator, and we can apply the procedure $MP$ described in Appendix \ref{App:MP} to obtain a proper estimator).  It is not hard to see that the runtime of the above algorithm is $n^{O(d)}$. The proof of Theorem \ref{Thm:ConvexFunc} is complete.
\subsection{On  Lemma \ref{Lem:VerMSE}: Optimal estimation of a norm of a convex function}
First, we develop a new estimator for the $L_1$ norm of any convex function.
    % In proof of the last lemma, we also prove the following result that may be indepdent of interest. 
    \begin{lemma}\label{Lem:LoneNorm}
  Let $\delta \in (0,1)$ and $f:K \to \R$ be a convex function, and suppose that $m$ i.i.d. samples are drawn from the regression model
    \[
        Y = f(Z) + \xi \text{ where $Z \sim U(K)$ and $\|f\|_{L_1(U(K))} \gtrsimd  (\sigma+\|f\|_{L_2(U(K))}) \cdot \sqrt{\frac{\log(2/\delta)}{m}}$.}
    \]
    There exists an estimator $\bar{f}_{m}^{\delta}$ which satisfies the following with probability at least $1-3\max\{\delta,e^{-cm}\}$:
    \[
        \bar{f}_m^{\delta} \asymp C(K) \cdot \|f\|_{L_1(U(K))},
    \]
    where $C(K)$ is a constant that only depends on the convex set $K$, and in particular dimension $d$.
    \end{lemma}
Clearly, Lemma \ref{Lem:LoneNorm} is invariant with respect to affine transformations of the domain. Thus, the constants $C(K)$ is the same for all $K$ in the class of affine images of a fixed convex body in $\R^d$, such as the class of simplices.
    
    The estimator of the last lemma gives an optimal error rate, with respect to the number of samples $m$, for the $L_1(U(\triangle))$-norm of any convex function $g$ (with no restriction on its uniform norm or Lipschitz constant). In the next step, we aim to find the $L_2(U(\triangle))$ norm of a convex function $g:\triangle \to [-L,L]$.

    To obtain a similar result as in the previous lemma, it will be essential for both the statistical guarantees and the computational aspects of the proposed estimator to assume that the domain of $g$ is a simplex and that $\|g\|_\infty \le L$. This part of the proof relies heavily on the notion of the floating body of a polytope \cite{schutt1990convex}, which is defined in the Appendix below.
\paragraph{On Lemma \ref{Lem:LoneNorm}}
Its main idea is the following: We introduce a probability measure on the regular simplex $S$, defined as
    \begin{equation}\label{Eq:ptri_def}
    p_{S}(x) := \frac{1}{N_d}\int_S \frac{1_{B_y}(x)}{U(B_y)}dU(y),
    \end{equation}
    where $B_y$ is the largest ball centered on $y$ which is contained in $S$, and $N_d$ is the normalization constant (we will prove below that indeed it only depends on $d$).  
    % Note that it only depends on $d$, as all the points in $S/4$, that is the shrunken simplex $1/4$), has a density that is greater by some $c_1(d)$, and the volume of $S/4$ is $4^{-d}\Vol(S)$.
    
    For any simplex $\triangle$, we define the density $p_{\triangle}$ as the pushforward of $p_S$ under the affine transformation $T$ sending $S$ to $T$. Note that its normalization is a constant that only depends on $d$. 
    
    % Hence, if for every $x \in \triangle$ we define $B_x$ to be the largest ball centered on $x$ which is contained in $\triangle$, and define the measure
    % \begin{equation}\label{Eq:ptri_def}
    % p_{\triangle}(x) := \int_\triangle \frac{1_{B_y}(x)}{U(B_y)}dU(y)
    % \end{equation}
    
    \begin{lemma}\label{Lem:BiasEst} Assume that  Let $M, M'$ be positive constants and that $\triangle=S$ is the regular simplex. Let $g: \triangle \to [-M'\|g\|_{L_1(U(\triangle))},M'\|g\|_{L_1(U(\triangle))}]$ be a convex $M$-Lipschitz function which is orthogonal in $L_2(U\triangle)$ to the affine functions.
    % such that $\|g - w_{g}\|_{L_2(\PP_{\triangle})} \geq c_1(M)\|g\|_{L_2(\PP_{\triangle})}$, where w_{g}:=
    Then, there exist positive constants such that:
    \begin{equation}\label{Eq:BiasEst}
        \int g d p_\triangle \asyd  C_1(M,M')\|g\|_{L_1(U(\triangle))}.
        %  - \int_{\triangle} w_{g} dU(x)
    \end{equation}
    In addition, for every affine function $w$,
    \[
    \int_{\triangle} w dp_{\triangle} = \int_{\triangle} w\, dU.
    \]
    Moreover,  there exists an efficient algorithm to compute $p_\triangle(x)$ for any $x \in \triangle$, and it holds that
    $$ C^{-d} \le \min_{x \in \triangle} p_\triangle(x)  \le \max_{x \in \triangle} p_\triangle(x) \le C^d,$$ 
for some absolute constant $C \geq 0$,
 % where $v_d$ is the volume of the unit ball in dimension $d$,
    \end{lemma}
    
    The idea behind the proof of this lemma is that for any point $x \in \triangle$ and a ball $B_x \subset \triangle$, the average $\bar g(x)$ of $g$ over $B_x$ is at least $g(x)$, with equality iff $g$ is affine on $B_x$. If $g$ is nonzero and orthogonal to affine functions, the averaged function $\bar g$ must have positive integral, and a compactness argument then yields a lower bound on $\frac{1}{\|g\|_1}\int \bar g$. The  proof is given in the Appendix.  

\subsubsection*{ACKNOWLEDGMENTS}
The first author is supported by NSF through award DMS-2031883 and from the Simons Foundation through Award 814639 for the Collaboration on the Theoretical Foundations of Deep Learning, as well as support from NSF through grant DMS-1953181. The second author was supported by the ERC under the European Union’s Horizon 2020 research and innovation programme (grant agreement No 770127). The first author would like to acknowledge Prof. Alexander Rakhlin, Prof. Mattias Reitzner and Dr. Nikita Zhivotevsky for the useful discussions. The authors  deeply acknowledge Prof. Shiri Artstein-Avidan for introducing the two authors to each other.

\bibliographystyle{abbrv}
    \bibliography{Bib}
% \newpage
\appendix
    \section{Definitions and Preliminaries}\label{App:Def}
    Here we collect definitions needed for the appendices below.
    
    % \begin{definition}\label{Def:Hausdorf}
    % The Hausdorff distance between to sets $A,B$ (w.r.t to the Euclidean distance)  is defined by
    % \[
    %     d_{H}(A,B) = \max\{\max_{x \in B} d(x,A),\max_{x \in A }d(x,B)\}.
    % \]
    % \end{definition}
    \begin{definition}\label{Def:Net}
    For a fixed $\eps \in (0,1)$, and a function class $\cF$ equipped with a probability measure $\QQ$, an $\eps$-net is a set that has the following property: For each $f \in \cF$ there exists an element in this set, denoted by $\Pi(f)$, such that $\|f-\Pi(f) \|_{\QQ} \leq \eps$.
    \end{definition}
    \begin{definition}\label{Def:NetB}
    We denote by $\cN(\eps,\cF,\QQ)$ the cardinality of the minimal $\eps$-net of $\cF$ (w.r.t to $L_2(\QQ)$).
    
    Also denote by $\cN_{[]}(\eps,\cF,\QQ)$ the cardinality of the minimal $\eps$-net with bracketing, which is defined as a set that has the following property: For each $f \in \cF$ there exists two elements $f_{-} \leq f \leq  f_{+}$ such that $\|f_{+}-f_{-} \|_{\QQ} \leq \eps$.
    \end{definition}
    % We use the terms $\eps$-entropy of $\cF$ w.r.t. $L_2(\PP)$ at scale $\eps$ (with bracketing) for the log-cardinality of the $\eps$-net (with bracketing). 
    Next, we recall the definition of $\PP$-Donsker and non $\PP$-Donsker classes for uniformly bounded $\cF$.
    \begin{definition}\label{Def:Donsker}
    $(\cF,\PP)$ is said to be $\PP$-Donsker if there exists $\alpha \in (0, 2)$ such that for all $\eps \in (0,1)$, we have $\log \cN_{[]}(\eps,\cF,\PP) = \Theta_{\PP,\cF}(\eps^{-\alpha})$, and non $\PP$-Donsker if the same holds with $\alpha \in (2, \infty)$.
    \end{definition}
    \begin{remark}\label{Rem:Donsker}
    It is shown in \citep{bronshtein1976varepsilon,gao2017entropy} that
    \[
        \log \cN_{[]}(\eps,\cF_{L}(\Omega) ,\PP) = \Theta_{d}((L/\eps)^{d/2})
    \]
    and
    \[
          \log \cN_{[]}(\eps,\cF^{\Gamma}(P) ,\PP) = \Theta_{d}(C(P)(\Gamma/\eps)^{d/2}).
    \]
    Therefore, when the dimension $d \geq 5$, both $\cF_{L}(\Omega)$ and $\cF^{\Gamma}(P)$ are non-Donsker classes.
    \end{remark}
    
    \paragraph{Basic notions regarding polytopes}
    A quick but thorough treatment of the basic theory is given in, e.g. \citep[\S 2.4]{schneider2014convex}. A set
    $P \subset \mathbb \R^{d}$ is called a polyhedral set if it is the intersection of a finite set of half-spaces, i.e., sets of the form $\{x \in \mathbb \R^{d}: x \cdot a \le c\}$ for some $a \in \mathbb \R^{d}$, $c \in \mathbb R$. A polyhedral set $P$ is called a polytope if it is bounded and has nonempty interior; equivalently, a set $P$ is a polytope if it is the convex hull of a finite set of points and has nonempty interior. 
    
    The affine hull of a set $S \subset \mathbb \R^d$ is defined as 
    $$\aff S = \bigcup_{k = 1}^\infty \{\sum_{i = 1}^k a_i x_i: x_i \in K, a_i \in \mathbb R\,|\,\sum_{i = 1}^k a_i = 1\},$$
    which is the minimal affine subspace of $\mathbb \R^{d}$ containing $S$. For a convex set $K$, we define its dimension to be the linear dimension of its affine hull.
    
    For any unit vector $u$ and any convex set $K$, the support set $F(K, u)$ is defined as 
    $$F(K, u) = \{x \in K: x \cdot u = \max_{y \in K} y \cdot u\}.$$
    (If $\max_{y \in K} y \cdot u = \infty$ then $F(K, u)$ is defined to be the empty set.) 
    
    Suppose $P$ is a polyhedral set. For any $u \in \mathbb{S}^{m - 1}$, $F(P, u)$ is a polyhedral set of smaller dimension than $K$. Any such $F(P, u)$ is called a face of $P$, and if $F(P, u)$ has dimension $m - 1$, it is called a facet of $P$. A polyhedral set $P$ which is neither empty nor the whole space $\mathbb \R^{d}$ has a finite and nonempty set of facets, and every face of $P$ is the intersection of some subset of the set of facets of $P$. If $P$ is a polytope, all of its faces, and in particular all of its facets, are bounded. A polytope is called simplicial if all of its facets are $(m - 1)$-dimensional simplices, which is to say, each facet $F$ of $P$ is the convex hull of precisely $m$ points in $\aff F$.
    \section{Proofs of Missing Parts}
    \subsection{Proof of Theorem \ref{Cor:Aproximation}}\label{App:One}
    % \subsection*{Notations and Preliminaries}
    % Finally, we introduce a classical lemma from convex geometry (cf. \citep{artstein2015asymptotic}).
    % \begin{lemma}\label{Lem:SimpleProb}
    % Let $\Omega \subset B_d$, and let $X_1,\ldots,X_n \underset{i.i.d}{\sim} U(\Omega)$. Then, with probability of at-least $1-Cn^{-2}$, we have that for each $x \in \Omega$, we can find $\Pi(x) \in \{X_1,\ldots,X_n\}$ such that $\|x-X_i\| \leq O_d((\log(n)n^{-1})^{1/d})$, and moreover that ***.
    % \end{lemma}
    % \begin{proof}[Proof of Theorem \ref{Cor:Aproximation}]
    Since the squared $L^2$-error scales quadratically with the function to be estimated, it suffices to prove the theorem for the class of $1$-Lipschitz functions. Since the range of a $1$-Lipschitz function on a domain of diameter at most $1$ is contained in an interval of length $1$, it is no loss to assume that the range of $f^*$ is contained in $[0, 1]$.
    
    The construction of a $k$-affine approximation to any convex $1$-Lipschitz function $f^*: \Omega \to [0, 1]$, uses a combination of two tools: the theory of random polytopes in convex sets, and empirical processes. %
    
    Fix a convex body $K \subset \mathbb \R^{d}$ and $n \ge d + 1$. The random polytope $K_n$ is defined to be the convex hull of $n$ random points $X_1, \ldots, X_n \sim U(K)$, where $U(\cdot)$ denotes the uniform distribution. It is well-known and easy to justify that $K_n$ is a simplicial polytope with probability $1$: Indeed, if $X_1, \ldots, X_n$ form a facet of $K_n$ then in particular they lie in the same affine hyperplane, and if $k \ge d + 1$, the probability that $X_k$ lies in the affine hull $H$ of $X_1, \ldots, X_n$ is $0$, since $K \cap H$ has volume $0$. For future use we note that with probability $1$, the projection of every facet of $K_n$ on the first $d - 1$ coordinates is a $(d - 1)$-dimensional simplex, by similar reasoning.
    % cf. (1.11) and the paragraph beneath
    % \begin{theorem}\label{Thm:rpolyvol} For any $d \geq 2$, $n \geq Cd^{d/2}$ and a convex set $K \subset \mathbb \R^{d}$, the following holds with probability of at least $1-n^{-2}$
    % $$  1 - \frac{\vol(K_n)}{\vol(K)} \le Cd n^{-\frac{2}{d + 1}},$$.
    % % where $C(d) = O(d)$.
    % \end{theorem}
    % \DrEli{the explicit $C(d)$ is known here, please add it and add a reference}
    
    For $s \in \{0, 1, \ldots, d - 1\}$ and $P$ a polytope, we let $f_s(P)$ denote the number of $s$-dimensional faces of $P$. The first result regarding random polytopes that we need appears in \citep[Corollary 3]{barany1989intrinsic}:
    
    \begin{theorem}\label{Thm:facets} Let $d \geq 1$,  $ 1 \leq s \leq d-1 $ and a convex body $K \subset \mathbb \R^{d}$. Then,  there exists $C(d,s) \leq C_1(d)$ such that 
    \[ 
    \mathbb E[f_s(K_n)] \le C(d,s) n^{\frac{d - 1}{d + 1}}.
    \]
    \end{theorem}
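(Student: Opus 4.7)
My plan is to reduce the count of $s$-faces to the count of facets using a combinatorial inequality, and then bound the expected number of facets via the economic cap-covering machinery of B\'ar\'any--Larman.

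First, I observe that $K_n$ is a simplicial polytope almost surely: since any fixed hyperplane has Lebesgue measure zero in $\R^d$, any $d+1$ of the samples $X_1, \ldots, X_n$ are in general position with probability one, so every facet of $K_n$ is a $(d-1)$-simplex. Each such facet contains exactly $\binom{d}{s+1}$ faces of dimension $s$, and every $s$-face of $K_n$ belongs to at least one facet. Consequently $f_s(K_n) \le \binom{d}{s+1} f_{d-1}(K_n) \le 2^d f_{d-1}(K_n)$, and it suffices to treat the case $s = d-1$.

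Second, I would write the expected number of facets via the integral--geometric identity
\[
\E[f_{d-1}(K_n)] = \binom{n}{d} \int_{K^d} \bigl[p_+(x_1,\ldots,x_d)^{n-d} + p_-(x_1,\ldots,x_d)^{n-d}\bigr]\, \frac{dx_1 \cdots dx_d}{|K|^d},
\]
where $p_\pm$ denote the relative volumes of the two half-spaces cut off by the affine hull of $x_1, \ldots, x_d$. The integrand is negligibly small unless $\min(p_+, p_-) \lesssim 1/n$, in which case the hyperplane must lie near the boundary of $K$ and all $d$ spanning points are contained in a thin cap of $K$.

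Third, I invoke the B\'ar\'any--Larman economic cap-covering theorem: for any $v \in (0, |K|/2)$, the ``wet part'' $K \setminus K[v]$ can be covered by at most $C(d)\, v^{-(d-1)/(d+1)}$ caps of $K$, each of volume at most $c(d)\, v$. Choosing $v \asymp 1/n$ yields $O_d(n^{(d-1)/(d+1)})$ relevant caps. A $d$-tuple lying inside one such cap contributes $\binom{n}{d} \cdot (c(d)/n)^d \cdot (1-c'(d)/n)^{n-d} = O_d(1)$ to the integral, so summing over caps (and over a dyadic decomposition in $v$, whose contributions form a geometric series) yields the claimed bound $C(d)\, n^{(d-1)/(d+1)}$. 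The main obstacle is the bookkeeping in this last step: carefully translating the hyperplane integral into a sum over caps from the covering, handling the dyadic scales, and managing the two-sided ambiguity of $p_\pm$. This is the technical heart of B\'ar\'any's original argument in \citep{barany1989intrinsic}, which I would follow.
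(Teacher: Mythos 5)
The paper does not prove this statement at all; it is quoted verbatim from B\'ar\'any, \emph{Intrinsic volumes and $f$-vectors of random polytopes} (1989), Corollary~3, and the text immediately preceding the theorem makes this explicit. Your sketch is therefore a reconstruction of the cited proof rather than an alternative to anything in the paper, and as a reconstruction it is faithful: the reduction of $f_s$ to $f_{d-1}$ via almost-sure simpliciality and the bound $f_s \le \binom{d}{s+1} f_{d-1}$ is exactly right, the Efron-type identity $\E[f_{d-1}(K_n)] = \binom{n}{d}\E[p_+^{n-d}+p_-^{n-d}]$ is the correct starting point, and the use of the economic cap covering across dyadic volume scales $v_j \asymp 2^j/n$ is how B\'ar\'any closes the argument. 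One small inaccuracy worth noting in your ``$O_d(1)$ per cap'' step: the product $\binom{n}{d}(C(d)/n)^d(1-c'(d)/n)^{n-d}$ is $O_d(1)$ because $\binom{n}{d}(C(d)/n)^d \le C(d)^d/d!$ and the last factor is merely $\le 1$; the exponential factor only begins to help at the higher dyadic levels $j \ge 1$, where it becomes $e^{-c2^j}$ and must beat the polynomially growing factor $(2^j)^{d - (d-1)/(d+1)}$ coming from the enlarged cap volumes, and it is precisely this competition that makes the series over $j$ converge to $O_d(n^{(d-1)/(d+1)})$. You have correctly identified that this bookkeeping (together with disentangling $p_+$ from $p_-$) is the nontrivial content of the reference.
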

    We will also use the following classical estimates \cite{brunel2013adaptive,schutt2003polytopes} and references within.
    \begin{theorem}\label{Thm:RndPolyVol}
    Let $\Omega\subset B_d$ be a convex body, and let $Y_1,\ldots,Y_m \underset{i.i.d.}{\sim} U(\Omega)$. Then, $P_m = \conv(Y_1, \ldots, Y_m)$ is a simplicial polytope with probability $1$, and the following holds:
    \[
     \E U(P \setminus P_m) \lesssim d \cdot m^{-\frac{2}{d+1}}.
    \]
    Furthermore, if $\Omega \subset B_d$, then
    \[
        \E d_H(P,P_m) \lesssimd (\log(m)/m)^{\frac{1}{d}},
    \]
    where $d_{H}$ denotes the Hausdorff distance.
    % and
    % \[
    %  \E f_{d-1}(S_m) = O_d(C(P)\log(m)^{d-1}).
    % \]
    \end{theorem}

    % \DrEli{There are known upper bounds on $C(d,s)$ please state them}
    The other result that we need from empirical processes appears as Lemma \ref{Lem:Bro} in the main text. We now describe our construction. Given a $1$-Lipschitz function $f^*: \Omega \to [0, 1]$, define the convex body 
    $$K = \{(x, y): x \in \Omega, y \in [0, 2]\,|\, f^*(x) \le y\}.$$
    In other words, $K$ is the epigraph of the function $f^*$, intersected with the slab $\mathbb R^d \times [0, 2]$. Note that $\vol_{d - 1}(\Omega) \le \vol_d(K) \le 2 \vol_{d - 1}(\Omega)$, since $\mathrm{Im}\,f^* \subset [0, 1]$.
    
    Let $n = \lfloor k^{\frac{d + 2}{d}}\rfloor$, and consider the random polytope $K_n \subset K$. Let $\Omega_k$ be the projection of $K_n$ to $\mathbb R^d$, and define the function $f_k: \Omega_k \to [0, 2]$ by 
    $$f_k(x) = \min \{y \in \mathbb R: (x, y) \in K_n\},$$ 
    i.e., $f_k$ is the lower envelope of $K_n$. In particular, since $K_n \subset K$, $f_k$ lies above the graph of $f^*$. We would like to show that with positive probability, $f_k$ satisfies the properties in the statement of the theorem. We treat each property in turn.
    
    \paragraph{$f_k$ is $k$-simplicial with probability at least $9/10$:}
    Using Theorem \ref{Thm:facets}, and Markov's inequality $K_n$ has at most $10 C(d) n^{\frac{d}{d + 2}} = C'(d) k$ facets (recall that all facets of $K_n$ are simplices with probability $1$). Letting $\triangle_1, \ldots, \triangle_F$ be the bottom facets of $K_n$, and letting $\pi: \mathbb R^d \times \mathbb R \to \mathbb R^d$ be the projection onto the first factor, $\pi(\triangle_1), \ldots, \pi(\triangle_F)$ is a triangulation of $\Omega_k$ and for each $i = 1, \ldots, F$, $\left.f_k\right|_{\triangle_i}$ is affine, as its graph is simply $\triangle_i$.
    
    \paragraph{Bounding $
    \PP(\Omega \backslash \Omega_k)$ with probability at least $9/10$:}
    Since $\Omega_k$ is the projection of $K_n$ to $\mathbb R^d$, it is equivalently defined as $\conv(\pi(X_1), \ldots, \pi(X_n))$ where $X_1, \ldots, X_n$ are independently chosen from the uniform distribution on $K$, and $\pi$ is the projection onto the first $d$ coordinates as above. $\pi(X_i)$ is not uniformly distributed on $\Omega$, so we cannot apply Theorem \ref{Thm:RndPolyVol} (and Markov's inequality directly). Instead, we re-express $\pi(X_i)$ as a mixture of a uniform distribution and another distribution, and apply Theorem \ref{Thm:RndPolyVol} to the points which come from the uniform distribution.
    
    In more detail, note that we may write $K = K_1 \cup K_2$ where $K_1 = (\Omega \times [0, 1]) \cap \mathrm{epi}\,f$ and $K_2 = \Omega \times [1, 2]$, since $f \le 1$. Let $p = \frac{\vol(K_2)}{\vol(\Omega)} \ge \frac{1}{2}$. The uniform distribution from $K$ can be sampled from as follows: with probability $p$, sample uniformly from  $K_2$, and with probability $1 - p$ sample uniformly from $K_1$. Clearly, if $X$ is uniformly distributed from $K_2$ then $\pi(X)$ is uniformly distributed on $\Omega$. Hence, $\Omega_k$ can be constructed as follows: draw $M$ from the binomial distribution $B(n, p)$ with $n$ trials and success probability $p$, then sample $M$ points $X_1, \ldots, X_M$ uniformly from $\Omega$ and sample $k - M$ points $X_1', \ldots, X_{k - m}'$ from some other distribution on $\Omega$, which doesn't interest us; then set $\Omega_k = \conv(X_1, \ldots, X_M, X_1', \ldots, X_{k - M}')$. In particular, $\PP(\Omega\backslash \Omega_k) \ge \PP(\Omega \backslash \Omega_M)$, so it is sufficient to bound the RHS with high probability.
    
    By the usual tail bounds on the binomial distribution, $M \ge \frac{np}{2} \geq \frac{n}{4}$ with probability $1 - e^{-\Omega(n)}$.  Hence, by Theorem \ref{Thm:RndPolyVol} we obtain
    % the expected value of
    \begin{align*}
         \E \PP(\Omega \backslash \Omega_M) &\leq  \E \PP(\Omega \backslash \conv\{X_1,\ldots,X_{n/4}\}) + C(d)e^{-c(d)n} \lesssimd k^{-\frac{2(d+2)}{d(d+1)}},
    \end{align*}
    % $\frac{\vol(\Omega \backslash \Omega_M)}{\vol(\Omega_M)}$ is at most 
    % $$C(d) n^{\frac{d - 1}{d + 1}} \le C(d) k^{-\frac{d + 2}{d}\cdot \frac{2}{d + 1}}.$$
    and we obtain $\PP(\Omega \backslash \Omega_M) \lesssimd k^{-\frac{2(d+2)}{d(d+1)}}$ with probability at least $\frac{9}{10}$ by Markov's inequality.
    % so with probability at least $\frac{2}{3}$, $\frac{\vol(\Omega \backslash \Omega_M)}{\vol(\Omega_M)} \le 3 C(d) k^{-\frac{2}{d}}$.
    
    \paragraph{Bounding $\int (f - f_k)^2d\PP$ with probability at least $9/10$:}
    Finally, we wish to bound the $\mathbb L^2(\PP)$-norm of $f^* - f_k$. To do this, we use the same strategy, arguing that on average, $k$ of the points of $K_n$ can be thought of as drawn from the uniform distribution on a thin shell of width $k^{-\frac{2}{d}}$ lying above the graph of $f^*$, which automatically bounds the empirical $L^2$-norm $\int (f^* - f_k)^2\,d\mathbb P_n$ and hence the $L^2$-norm by Lemma \ref{Lem:Bro}.
    
    Now for the details. Set $\epsilon = k^{-\frac{2}{d}}$, and define 
    $$K_\epsilon = \{(x, y): x \in \mathbb R^d, y \in [0, 2]\,|\,f^*(x) \le y \le f^*(x) + \epsilon\},$$
    i.e., $K_\epsilon \subset K$ is just the strip of width $\epsilon$ lying above the graph of $f^*$. By Fubini, $K_\epsilon$ has volume $\epsilon \vol(\Omega) \ge \frac{\epsilon}{2 \vol(K)}$ , and if $X$ is uniformly distributed on $K_\epsilon$, $\pi(X)$ is uniformly distributed on $\Omega$. Hence, we can argue precisely as in the preceding: with probability $1 - e^{-\Omega(n)}$,
    $$L :=  |\{X_i: X_i \in K_\epsilon\}| \ge \frac{\epsilon n}{4} = \frac{k}{4}.$$
    Conditioning on $L$ for some $L \ge \frac{k}{4}$ and letting $X_1, \ldots, X_L$ be the points drawn from $K$ which lie in $K_\epsilon$ we have that $\pi(X_1), \ldots, \pi(X_L)$ are uniformly distributed on $\Omega$. Moreover, for any $i \in \{1, \ldots, n\}$, $X_i \in K_n$ and so it lies above the graph of $f_k$, but also $X_i \in K_\epsilon$ and so it lies below the graph of $f^* + \epsilon$. Combining these two facts yields 
    $$\forall 1 \leq i \leq L: \quad f_k(\pi(X_i)) \le (X_i)_{d + 1} \le f^*(\pi(X_i)) + \epsilon,$$
    where $(\cdot)_{d+1}$ denotes the $d+1 $ coordinate. Hence,
    \begin{equation}\label{Eq:bbb}
    \forall 1 \leq i \leq L: \quad f^*(\pi(X_i)) \le f_k(\pi(X_i)) \le f^*(\pi(X_i)) + Ck^{-2/d}.
    \end{equation}
    Thus, letting $\mathbb P_L = \frac{1}{L} \sum_{i = 1}^L \delta_{\pi(X_i)}$ denote the empirical measure on $\pi(X_1), \ldots, \pi(X_L)$, we obtain  
    $$\int_\Omega (f^* - f_k)^2\,d\mathbb P_L \le \frac{1}{L} \sum_{i = 1}^L \epsilon^2 = \epsilon^2 = k^{-\frac{4}{d}}.$$
    Since the $\pi(X_i)$ are drawn uniformly from $\Omega$, if we knew that $f_k$ were $1$-Lipschitz it would follow from Lemma \ref{Lem:Bro} that
    $$\int_\Omega (f^* - f_k)^2\,d\mathbb P \le k^{-\frac{4}{d}} + C(d) k^{-\frac{4}{d}} \lesssimd k^{-\frac{4}{d}},$$  with high probability.
    
    % Although $f_k$ is not $1$-Lipschitz, we know that $f^* \le f_k \le f^* + \epsilon$, and $f^* + \epsilon$ is $1$-Lipschitz, so we can apply Lemma \ref{Lem:Bro} to it. Hence we obtain
    
    % $$\int (f^k - f^*)^2\,d\mathbb P \le \int ((f^* + \epsilon) - f^*)^2\,d\mathbb P $$
    
    We do not know, however, that $f_k$ is $1$-Lipschitz. To get around this, define the function $\widehat{f}_k$ as the function on $\Omega_k$ whose graph is
    $
        \conv\{(\Pi(X_i),f^{*}(\Pi(X_i)))\}_{i=1}^{L}.
    $
    Unlike $f_k$, $\widehat f_k$ is necessarily $1$-Lipschitz since $f^*$ is (see, e.g., the argument in the paragraph below equations \eqref{Eq:ConvCons1}-\eqref{Eq:ConvCons3}), so by Lemma \ref{Lem:Bro}, it follows that 
    \[
    \int_\Omega (f^* - \widehat{f}_k)^2\,d\mathbb P \le  C_1k^{-\frac{4}{d}}
    \]
    with probability at least $1-C(d)\exp(-c(d)k)$. Also, by \eqref{Eq:bbb}, 
    \[
        \forall 1 \leq i \leq L: \quad  \widehat{f}_k(\pi(X_i)) \leq f_k(\pi(X_i)) \le  \widehat{f}_k(\pi(X_i)) + C_1k^{-2/d}.
    \]
    It easily follows by the definitions of $f_k$ and $\widehat f_k$ as convex hulls that on the domain $\Omega_{\Pi(X)}:= \conv\{(\Pi(X_i))\}_{i=1}^{L}$, we have
    \[
        \quad f^{*} \leq \widehat f_k \le f_k \leq \widehat{f}_k  + Ck^{-2/d}.
    \]
    Hence, we conclude that
    \begin{align*}
          \int_{\Omega_{\Pi(X)}}  (f_k - f^*)^2\, d\PP &\leq 2\int_{\Omega_{\Pi(X)}}  (\widehat f_k - f^*)^2\,d\PP + 2\int_{\Omega_{\Pi(X)}}  (f_k - \widehat f_k)^2\,d\PP \\
          &\le 2\int_{\Omega_{\Pi(X)}}  (\widehat f_k - f^*)^2\,d\PP + 2\| f_k - \widehat f_k\|_{L^\infty(\Pi(X))}^2 \\&\lesssim k^{-4/d}
    \end{align*}
    with high probability.
    % A result that is proven in \citep[Cor 1.]{dumbgen1996rates}\footnote{The high probability bounds do not appear directly in the corollary but follow from their proof.} that gives an estimate of the Hausdorff distance between $K_n$ to $K$ shows that it is at most $C_dk^{-\frac{(d+2)}{d(d+1)}}\log(k)^{\frac{1}{d+1}}$. Since $f^{*}$ is $1$-Lipschitz function, we obtain that
    % \[
    %     \| (f_k - f^{*})\mathbbm{1}_{\mathrm{supp}f_k}\|_{\infty} \leq Ck^{-\frac{d+2}{d(d+1)}}\log(k)^{\frac{1}{d+1}}.
    % \]
    % with probability of at least $1-k^{-2}$. 
    Now, using Theorem \ref{Thm:RndPolyVol} and Markov's inequality, we also know that
    \[
        U(\Omega \setminus \Omega_{\Pi(X)}) \lesssim dk^{-\frac{2(d+2)}{d(d+1)}}.
    \]
    with probability at least $\frac{19}{20}$. Conditioned on this event, and using the fact that $f_k$ is $L$-Lipschitz, and that $d_{H}(\Omega \setminus \Omega_{\Pi(X)}) \lesssimd (k/\log(k))^{-\frac{d+2}{d^2}}$, that 
    \[
        \int_{\Omega \setminus \Omega_{\Pi(X)}}(f_k - f^{*})^2 \lesssimd (k/\log(k))^{-\frac{2(d+2)}{d^2}} k^{-\frac{2(d+2)}{d(d+1)}} \ll C(d)k^{-\frac{4}{d}}
    \]
    where we used that\[
\frac{2(d+2)}{d^2}+\frac{2(d+2)}{d(d+1)} - \frac{4}{d}
\;=\; \frac{2(3d+2)}{d^2(d+1)} .
\]
    % Ck^{-\frac{2(d+2)}{d(d+1)}}\log(k)^{\frac{2}{d+1}}    
    \paragraph{Deriving the theorem} Since we have three events each of which hold with probability at least $9/10$, then the intersection of these events is not empty. Therefore, an $f_k$ satisfying all the desired properties exists, and the theorem follows.
    \begin{remark}
        When the domain $\Omega$ is a polytope with a number of facets that is bounded by $C_3(d)$, and the underlying convex function is bounded by $\Gamma$. We have to take a detour, by using a different argument, using the result of \cite{dwyer1988convex}, we know that
        \[
            \E U(P \setminus P_m) \lesssimd C(\Omega) k^{-\frac{d+2}{d}} \cdot \log(k)^{d-1},
        \]
        and use the uniform boundedness of the convex function, and by a $\Gamma$ on $\Omega \setminus \Omega_{\Pi(X)}$ rather than $ d_{H}(\Omega,\Omega_{\Pi(X)})$.
    \end{remark}

     \subsection{Proof of Lemma \ref{Lem:RandomSimp}}\label{App:RandomSimp}
    We start with the following easy lemma:
    \begin{lemma}\label{Lem:Standard}
    The following event holds with probability at least $1-n^{-3d}$:
    \begin{equation}
      \forall 1\leq  i \leq k(n) \  \text{ s.t. } \PP(\triangle_i) \geq C_3d\log(n)/n: \ \     2^{-1}\PP(\triangle_i )\leq \PP_n(\triangle_i) \leq 2\PP(\triangle_i).
    \end{equation}
    \end{lemma}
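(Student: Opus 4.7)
The proof is a routine concentration-plus-union-bound argument, but let me lay out the plan explicitly.

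The plan is to fix one simplex $\triangle_i$ with $\PP(\triangle_i) \geq C_3 d\log(n)/n$ and observe that $n\PP_n(\triangle_i) = \sum_{j=1}^n \mathbf{1}[X_j \in \triangle_i]$ is a Binomial random variable with parameters $n$ and $p := \PP(\triangle_i)$. A standard multiplicative Chernoff bound then gives
\[
\PP\!\left[\PP_n(\triangle_i) \notin \tfrac{1}{2}\PP(\triangle_i),\, 2\PP(\triangle_i)]\right] \leq 2\exp(-c\,n\PP(\triangle_i))
\]
for an absolute constant $c > 0$ (e.g.\ $c = 1/8$). Using the assumed lower bound on $\PP(\triangle_i)$, the right-hand side is at most $2 n^{-c C_3 d}$.

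Next, I would take a union bound over the at most $k(n) = n^{d/(d+4)} \leq n$ simplices which have $\PP(\triangle_i) \geq C_3 d\log(n)/n$. This gives a total failure probability of at most $2 n \cdot n^{-c C_3 d} = 2 n^{1 - c C_3 d}$, which is bounded above by $n^{-3d}$ once $C_3$ is chosen large enough (e.g.\ $C_3 \geq 4/c$ suffices, since then $1 - cC_3 d \leq 1 - 4d \leq -3d$ for $d \geq 1$).

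There is no real obstacle here: the argument only uses that $k(n)$ is polynomial in $n$ (so that the union-bound cost is absorbed by choosing $C_3$ large) and the standard Chernoff inequality for Bernoulli sums. The only thing to be mindful of is to keep track of constants so that the final exponent $-3d$ is achieved uniformly in $d \geq 5$; this is done by selecting $C_3$ as an absolute constant large enough to dominate both the union-bound cost and the constant in the Chernoff exponent.
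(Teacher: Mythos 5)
Your proposal is correct and follows essentially the same route as the paper: recognize $n\PP_n(\triangle_i)$ as binomial, apply a multiplicative Chernoff bound with the assumed lower bound $\PP(\triangle_i)\geq C_3 d\log(n)/n$ to get a per-simplex failure probability of $n^{-\Omega(C_3 d)}$, and then union bound over the at most $k(n)\leq n$ simplices, absorbing the extra factor of $n$ by choosing $C_3$ large. The paper states its per-simplex bound as $1-n^{-(3d+1)}$ before the union bound, but the bookkeeping is identical to yours.
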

    \begin{proof}
    The lemma follows for the fact that $n\cdot \PP_n(S) \sim Bin(n,\PP(S))$, along with the concentration inequality (cf. \citep{boucheron2013concentration}) for binomial random variables: for all $\eps \in (0,1)$,
    \[
        \Pr\left(\left|\frac{\PP_n(S)}{\PP(S)} - 1\right| \leq \eps\right) \leq 2\exp(-c\min\{\PP(S),1-\PP(S)\}n\eps^2).
    \]
    By taking $\eps = 1/2$, and choosing $C$ to be large enough, we conclude that for any particular $\triangle_i$, 
    \[
        \PP(\triangle_i) \geq C_3d\log(n)/n: \ \     2^{-1}\PP(\triangle_i )\leq \PP_n(\triangle_i) \leq 2\PP(\triangle_i)
    \]
    with probability at least $1 - n^{-(3d + 1)}$. Taking the union bound over all $k(n)$ simplices, the claim follows.
    \end{proof}
    
    The main step is the following lemma, which shows that for any given simplex $\triangle_i$, if we draw $Cd \log n$ points from the uniform distribution on $\triangle_i$ for sufficiently large $C$, then there exists some subset $S$ of these points whose convex hull $P$ covers almost all of the simplex and can also be triangulated by a polylogarithmic number of simplices whose vertices lie in $S$.
    
    \begin{lemma}\label{Lem:SimpDecomp}
    Let $S \subset \R^d$ be a simplex, and $m \gtrsim 
    d\log(n)$. Let $Y_1,\ldots,Y_m \sim \PP_{S}$. Then, with probability at least $1-n^{-3d}$ there exists a set $\cA$ of simplices contained in $S$ with disjoint interiors of cardinality $|\cA| \leq C_d\log(m)^{d-1}$ such that
    \[
     \PP_{S}(S \setminus \bigcup \cA) \lesssimd \frac{\log(n)\log(m)^{d-1}}{m}.
    \]
    \end{lemma}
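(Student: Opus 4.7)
The plan is to apply the economic cap covering theorem of Bárány--Larman to the simplex, and assemble sample points from within the caps into a simplicial complex of the required size. Set $\epsilon := C_4 d \log(n)/m$ for a constant $C_4$ depending on $d$, to be chosen sufficiently large. The Bárány--Larman theorem applied to $S$ produces a family $\mathcal{C} = \{C_1, \ldots, C_N\}$ of $N = O_d(\log(1/\epsilon)^{d-1}) \leq O_d(\log(m)^{d-1})$ caps of $S$, each of volume $\Theta_d(\epsilon)$, with the property that every cap of $S$ of volume at least $c_d \epsilon$ contains some $C_j$. Moreover, $\vol(S \setminus S_\epsilon) = O_d(\epsilon \log(1/\epsilon)^{d-1}) = O_d(m^{-1} \log(n) \log(m)^{d-1})$, where $S_\epsilon$ denotes the floating body of $S$ at depth $\epsilon$.

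For each $C_j$, $\Pr[C_j \cap \{Y_1, \ldots, Y_m\} = \emptyset] \leq (1 - c_d \epsilon)^m \leq \exp(-c_d \epsilon m) = n^{-c_d C_4 d}$. For $C_4$ chosen large enough (depending on $d$), a union bound over the $N$ caps gives that, with probability at least $1 - n^{-3d}$, every $C_j$ contains at least one sample. On this event, any open halfspace $H$ that separates a point of $S_{c_d \epsilon}$ from $\{Y_i\}$ would cut off a cap of $S$ of volume at least $c_d \epsilon$ containing no sample, contradicting the above; thus $P_m := \conv(Y_1, \ldots, Y_m) \supseteq S_{c_d \epsilon}$, and in particular $\PP_S(S \setminus P_m) = O_d(m^{-1} \log(n) \log(m)^{d-1})$, matching the target volume bound.

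It remains to produce $\cA$ itself as a simplicial complex with vertices at sample points, $O_d(\log(m)^{d-1})$ simplices, and union containing (approximately) $S_{c_d \epsilon}$. The plan is to pick a representative $Y_{i_j} \in C_j$ for each cap, and a central sample $Y_{i_0} \in S_{1/2}$ (which exists w.h.p. since $\PP(S_{1/2}) = \Theta_d(1)$). One then chooses the covering $\mathcal{C}$ via an explicit dyadic construction -- caps obtained by recursively cutting $S$ with hyperplanes parallel to its lower-dimensional faces at dyadic depths -- so that its nerve is itself a simplicial complex $\Sigma$ with $O_d(\log(m)^{d-1})$ top-dimensional cells; one then realizes $\Sigma$ geometrically with vertices $\{Y_{i_j}\}$, coned from $Y_{i_0}$, to obtain $\cA$. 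The main obstacle is this last step: arranging the dyadic covering so that its cells are ``fat'' (bounded aspect ratio) and verifying that substituting arbitrary sample representatives within each cap for the ideal cap centers preserves combinatorial validity of the realization (disjoint interiors, full coverage of $S_{c_d' \epsilon}$). Crucially, this construction keeps the simplex count at $O_d(\log(m)^{d-1})$ rather than the polynomial blow-up in $N$ predicted by the upper bound theorem for generic polytopes on $N$ vertices -- the reason we cannot simply triangulate $P_m$ directly by coning from one of its vertices.
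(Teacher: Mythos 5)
Your cap-covering argument is a genuinely different route from the paper's, and the first part — showing that $P_m := \conv(Y_1,\ldots,Y_m)$ contains the convex floating body $S_{c_d\epsilon}$ with high probability, hence $\PP_S(S \setminus P_m) \le O_d(m^{-1}\log(n)\log(m)^{d-1})$ — is correct and clean. But the lemma also demands a triangulation $\cA$ with $|\cA| \leq C_d \log(m)^{d-1}$ simplices whose vertices lie among the samples, and this is exactly where you stop: you acknowledge that the step from ``a family of $N = O_d(\log(m)^{d-1})$ occupied caps'' to ``a simplicial complex on $\Theta(N)$ sample vertices with disjoint interiors covering $S_{c_d'\epsilon}$'' is ``the main obstacle,'' and you do not resolve it. The concern you raise is real: a $d$-polytope on $N$ vertices can have $\Theta(N^{\lfloor d/2 \rfloor})$ facets, so coning from an interior vertex could blow up the simplex count. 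Your sketched fix — engineer a dyadic cap covering whose nerve is combinatorially a triangulation, then substitute in sample representatives — is plausible but would require substantial work to make rigorous (bounded aspect ratio of cells, stability of the combinatorics under perturbing cap centers to arbitrary representatives). As it stands, the cardinality bound, which is the more delicate half of the lemma, is unproved.

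The paper sidesteps this entirely by invoking Dwyer's theorem for random polytopes in a simplex, which gives \emph{directly} that $\E f_{d-1}(S_m) = O_d(\log(m)^{d-1})$, i.e. the expected number of facets of the convex hull of $m$ uniform points in a simplex is already polylogarithmic. Once the facet count is bounded, triangulating by coning from any interior sample point gives $\cA$ for free. The only remaining issue is that Dwyer's bounds are in expectation, not high probability; the paper handles this with a standard blocking/Markov device — split the $m$ samples into $O_d(\log n)$ independent blocks, apply Markov's inequality to each block separately so that each succeeds with constant probability, and conclude that at least one block succeeds with probability $1 - (2/3)^{\Omega(d\log n)} \ge 1 - n^{-3d}$. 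If you want to salvage your cap-covering route, the cleanest repair is probably to notice that you don't need to triangulate the caps at all: just take any $O_d(\log n)$ of the $m$ points (or a random sub-block) and apply the facet-count bound to their convex hull. In other words, the missing ingredient in your proof is precisely the expected-facet-count result for random polytopes in a simplex, and without it the required $|\cA|$ bound does not follow from what you have written.
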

    \begin{proof}
    For each $s \in \{0, 1, \ldots, d - 1\}$ and $P$ a polytope, we let $f_s(P)$ denote the number of $s$-dimensional faces of $P$. We need the following result, which was first proven in \citep{dwyer1988convex}; for more details see the recent paper \cite{reitzner2019convex}. 
     
    \begin{theorem}
    Let $S \subset \R^d$ be a simplex, and let $Y_1,\ldots,Y_m \sim \PP_{S}$. Then, $S_m = \conv(Y_1, \ldots, Y_m)$ is a simplicial polytope with probability $1$, and the following holds:
    \[
     \E \PP_{S}(S \setminus S_m) = O_d(m^{-1}\log(m)^{d-1}),
    \]
    and
    \[
     \E f_{d-1}(S_m) = O_d(\log(m)^{d-1}).
    \]
    \end{theorem}
    This theorem does not give us what we need directly, since it treats only expectation while we require high-probability bounds. (To the best of our knowledge, sub-Gaussian concentration bounds are not known for the random variables $f_{d-1}(P_m),\PP(S \setminus S_m)$ when $S$ is a simplex, cf. \citep{vu2005sharp}.) This necessitates using a partitioning strategy.
    % , but rather only when $K$ is smooth,
    We divide our $Y_1,\ldots,Y_{m}$ into $C_1 d\log(n)$ blocks, for $C_1$ to be chosen later, each with $m(n) := \frac{m}{C_1 d\log(n)}$ samples drawn uniformly from $\triangle$. Let $P_1, \ldots, P_B$ be the convex hulls of the points in each block, each of which are independent realizations of the random polytope $S_{m(n)}$. For each $P_i$, Markov's inequality and a union bound yield that with probability at least $\frac{1}{3}$,
    \begin{align}\label{Eq:vol_bd}
        \PP_{S}(S \setminus P_i) &\leq 3\cdot\E\PP_{S}(S \setminus P_i) \lesssimd  m(n)^{-1}\log(m(n))^{d-1}  \\&\lesssimd \frac{\log(m)^{d-1}\log(n)}{n},
    \end{align}
    and 
    \begin{equation}\label{Eq:fac_bd}
        f_{d-1}(P_i) \leq  3\E f_{d-1}(S_{m(n)}) \lesssimd  \log(m(n))^{d-1} \lesssimd \log(m)^{d-1}.
    \end{equation}
    Since there are $C_1 d\log(n)$ independent $P_i$, at least one of them  will satisfy these conditions with probability $1 - \left(\frac{2}{3}\right)^{C_1 d \log n}$, and we may choose $C_1$ so that this is at least $1 - n^{-3d}$.
    
    Conditioned on the existence of $P_i$ satisfying \eqref{Eq:vol_bd} and \eqref{Eq:fac_bd}, we take one such $P_i$ and triangulate it by picking any point among the original $Y_1, \ldots, Y_m$ lying in the interior of $P_i$ and connecting it to each of the $(d - 1)$-simplices making up the boundary of $P_i$. The set $\cA$ is simply the set of $d$-simplices in this triangulation.
    \end{proof}
    
    Now, to obtain Lemma \ref{Lem:RandomSimp}, we condition on the event of Lemma \ref{Lem:Standard} and apply Lemma \ref{Lem:SimpDecomp} to each $\triangle_i$ such that $\PP(\triangle_i) \geq Cd\log(n)/n$, with the $Y_1, \ldots, Y_m$ taken to be the points of $X_{n + 1}, \ldots, X_{2n}$ drawn from $\PP$ which fall inside of $\triangle_i$. Using the fact that $\PP_n(\triangle_i) \geq 0.5\PP(\triangle_i)$, we see that $m \ge Cd \log n$ for each $\triangle_i$, so Lemma \ref{Lem:SimpDecomp} is in fact applicable. In addition, the bounds on the cardinality of $\mathcal S_X^i$ and on the volume of $\triangle_i$ left uncovered by the simplices in $\mathcal S_X^i$ follow immediately by substituting $c \PP(\triangle_i)$ for $m$ in the conclusions of Lemma \ref{Lem:SimpDecomp}. For $i$ such that $\PP(\triangle_i) \le Cd\log(n)/n$, we take $\mathcal S_X^i$ to be the empty set.
    
    \subsection{Proofs of Lemmas \ref{Lem:VerMSE} and \ref{Lem:LoneNorm} }\label{App:VerMSE}
    In several places, we will use a high-probability estimator for the mean of a random variable presented in \citep{devroye2016sub}:
     \begin{lemma}\label{Lem:HighPEst}
    Let $\delta \in (0,1)$ and let $Z_1,\ldots,Z_k$ be i.i.d. samples from a distribution on $\R$ with finite variance $\sigma_{Z}^2$. There exists an estimator $\widehat{f}_{\delta}: \mathbb R^k \to \mathbb R$ with a runtime of $O(k)$, such that with probability at least $1 - \delta$,
    \[
         (\widehat{f}_{\delta}(Z_1, \ldots, Z_k) - \E Z)^2 \lesssim \frac{\sigma_Z^2 \cdot \log(2/\delta)}{k}.
    \]
    \end{lemma}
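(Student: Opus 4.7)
My plan is to use the classical median-of-means estimator, which is the standard approach in the sub-Gaussian mean estimation literature that the citation to \citep{devroye2016sub} points to. Concretely, given $Z_1, \ldots, Z_k$ and confidence level $\delta$, I would:

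\textbf{Step 1 (Construction).} Set $B := \lceil c_0 \log(2/\delta)\rceil$ for an absolute constant $c_0$ to be tuned, and partition the sample into $B$ disjoint blocks of equal size $\lfloor k/B\rfloor$. On each block $j \in [B]$, compute the empirical mean $\bar{Z}_j$. The output $\hat{f}_\delta$ is the median of $\bar{Z}_1, \ldots, \bar{Z}_B$. Using a linear-time selection algorithm, the total runtime is $O(k)$.

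\textbf{Step 2 (Per-block analysis).} Each block mean $\bar{Z}_j$ is an unbiased estimator of $\E Z$ with variance $\sigma_Z^2 B / k$. By Chebyshev's inequality, for the target radius $t := \sigma_Z\sqrt{8\log(2/\delta)/k}$,
\[
\PP\big(|\bar{Z}_j - \E Z| > t\big) \;\le\; \frac{\sigma_Z^2 B / k}{t^2} \;=\; \frac{B}{8\log(2/\delta)} \;\le\; p
\]
for some constant $p < 1/2$, provided $c_0$ is small enough (e.g.\ $c_0 \le 1$ gives $p \le 1/8$).

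\textbf{Step 3 (Median aggregation).} Call a block ``bad'' if $|\bar{Z}_j - \E Z| > t$. The number of bad blocks is a sum of $B$ independent Bernoulli$(p')$ random variables with $p' \le p < 1/2$. By Hoeffding's inequality, the probability that at least $B/2$ of them are bad is at most $\exp(-2(1/2 - p)^2 B) \le \delta$, once $c_0$ is chosen large enough so that $2(1/2 - p)^2 c_0 \ge 1$. On the complementary event, strictly more than half of the $\bar{Z}_j$ lie within $t$ of $\E Z$, which forces their median to lie within $t$ of $\E Z$ as well. Squaring gives the claimed bound $(\hat f_\delta - \E Z)^2 \le 8 \sigma_Z^2 \log(2/\delta)/k$.

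The only mildly delicate point is reconciling the two opposing requirements on $c_0$: it must be small enough to make the per-block Chebyshev bound yield $p$ bounded away from $1/2$, and simultaneously large enough for the Hoeffding concentration of bad blocks to give failure probability $\le \delta$. Both are satisfied by choosing $c_0$ to be a suitable universal constant and, if needed, relaxing the constant $8$ in the bound by a harmless factor; no obstacle beyond this parameter tuning arises.
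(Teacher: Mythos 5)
Your median-of-means construction is exactly the estimator the cited reference \citep{devroye2016sub} uses for $\delta$-dependent sub-Gaussian mean estimation, and since the paper supplies no proof beyond the citation, your reconstruction is the intended route: block into $\Theta(\log(1/\delta))$ groups, apply Chebyshev to each block mean, aggregate by the median, and control the number of bad blocks by a binomial tail bound. The $O(k)$ runtime via linear-time selection is also correct.

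The only concrete gap is that your parameter tuning in Steps~2--3 cannot actually produce the stated constant $8$. With $B = \lceil c_0\log(2/\delta)\rceil$ blocks and target radius $t = \sigma_Z\sqrt{8\log(2/\delta)/k}$, Chebyshev gives per-block failure probability $p = c_0/8$, and the Hoeffding step demands $2(1/2 - c_0/8)^2 c_0 \ge \log(1/\delta)/\log(2/\delta)$. The right-hand side tends to $1$ as $\delta \to 0$, while $\max_{c_0 > 0}\, 2(1/2 - c_0/8)^2 c_0 = 8/27 < 1$ (attained at $c_0 = 4/3$), so no choice of $c_0$ works uniformly in $\delta$. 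Replacing Hoeffding with the sharper relative-entropy Chernoff bound for the binomial tail only improves the achievable constant to roughly $19$, still well above $8$. This is inconsequential for the rest of the paper, where the lemma is only ever invoked up to dimension-dependent constants, and you correctly anticipate that ``relaxing the constant $8$ by a harmless factor'' resolves it --- but that relaxation is genuinely necessary for the median-of-means route, not merely a safety margin; obtaining the constant as literally stated would require a finer estimator (e.g.\ Catoni's).
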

    
    First, we construct the estimator for the $L_1$ norm of a convex function $g$ defined on a convex body $K$, which is the content of Lemma \ref{Lem:LoneNorm}. Then, we show how to ``upgrade'' this estimator to an estimator of the $L_2$-norm in the special case that $K$ is a simplex. 
    The final step will be to estimate the $L_2$ norm of $g$ under the assumptions of Lemma \ref{Lem:VerMSE}, by using Lemma \ref{Lem:LoneNorm} and the claim of Lemma \ref{Lem:VerMSE} will follow. 
    
    % **Also, up an affine transformation (with a determinant one) that does not affect the norm, we may assume that $\triangle = S_{r(\triangle)}$, where $S_{r(\triangle)}$ is the regular simplex with with radius $r(\triangle) = \vol(\triangle)^{1/d}$. For each $\delta \in (0,1)$, we will use the notation  $\triangle^{\delta}:= \vol(\triangle)^{1/d}(1-\delta)\triangle$, and  for the the $L_{p}(dx)$ measure.
    % Clearly, our goal is to find a sharp bound on the $L_2$ norm of $g$.    Namely we will estimate  $\|g\mathbbm{1}_{\triangle^{s}}\|_1$. In the second step, we estimate the $L_1$ norm of on the remaining shell $\triangle \setminus \triangle^{s} $.  Using these estimates, we will obtain the above estimate on the $L_1$ norm of $g$. In the final step, we obtain the claim of Lemma \ref{Lem:VerMSE}.
    
    \subsubsection{Proof of Lemma \ref{Lem:LoneNorm}}\label{sssec:inner_simplex}
    We only prove this Lemma for $K$ a simplex, which is what we require for our algorithm. The proof for a general $K$ can be done in similar fashion, by placing $K$ in John position (besides the computational aspects).
    % $\|g\|_1$
    % First, recall that aim to aim to estimate the $L_2$ norm of $g$ is orthogonal to the affine functions, i.e. that $g \perp w_{*}$, where $w^*$ is the best linear approximation to $g_{*}$ (w.r.t to $L_2(\triangle)$). 
    % We may assume that the samples are generated from the above $g$.
    
    Let $S$ be the regular simplex inscribed in the unit ball $B_d$, and for each $t \in [0,1]$, denote by $S_{t} := (1-t)S$. We will use the following geometric facts, which can be extracted from the statements and proofs of \citep[Lemmas 2.6-2.7]{gao2017entropy}.
    
    \begin{lemma}\label{Lem:paring}
     Let $g: S \to \R$ be a convex function, and let $\|g\|_1 = \int_S |g|$ be its $L^1$-norm. We have:
     \begin{itemize}
         \item $- \|g\|_1 \lesssimd g(x) $ on all of $x \in S$.
         \item For each $\delta \in (0,1)$, the restricted $\left.g\right|_{S_{\delta}}$ is is $O_d(\delta^{-d}\|g\|_1)$-Lipschitz and $O_d(\delta^{-d} \|g\|_1)$ uniformly bounded.
     \end{itemize}
    %  \[
    
    %  \]
    %  where $C_d$ is a constant that only depends on $d$.
    %  and that $g_{|_{\triangle^{\delta}}}$ is .
    %  where $\nabla $ denotes the gradient (or some member in the sub-differential).
    \end{lemma}
    We immediately obtain the following corollary:
    %  {\citep{gao2017entropy}}
    \begin{corollary}\label{Cor:Gao}
    For any $a \in (0,1)$, there exists a constant $\delta := \delta(a)$ such that $g$ restricted to $S_{\delta}$ is uniformly bounded by $O_d(\|g\|_1)$; moreover, letting $g_- = \min(g, 0)$, we have
    \[\int_{S \setminus S_{\delta}} |g_{-}|\, dU(x) \leq a\|g\|_{1}.\]
    \end{corollary}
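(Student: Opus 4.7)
\textbf{Proof plan for Corollary \ref{Cor:Gao}.} The uniform bound on $g|_{S^\delta}$ is essentially free: it is precisely the second bullet of Lemma \ref{Lem:paring} applied at the chosen $\delta$. So the content of the corollary lies in controlling the $L^1$ mass of the negative part $|g^-|$ in the boundary shell $S \setminus S^\delta$.

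My approach is to decouple the pointwise behaviour of $g^-$ from its boundary-shell mass by using a global floor on $g$. Specifically, the first bullet of Lemma \ref{Lem:paring} gives $g \geq -C_d \|g\|_1$ on all of $S$, and therefore $|g^-(x)| \leq C_d \|g\|_1$ uniformly on $S$. Consequently,
\begin{equation*}
\int_{S \setminus S^\delta} |g^-|\, dU(x) \;\leq\; C_d \|g\|_1 \cdot U(S \setminus S^\delta) \;=\; C_d \|g\|_1 \bigl(1 - (1-\delta)^d\bigr),
\end{equation*}
where I used that $S^\delta = (1-\delta)S$ has $U$-measure $(1-\delta)^d$. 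So it suffices to choose $\delta \in (0,1)$ so that $C_d\bigl(1 - (1-\delta)^d\bigr) \leq l$, equivalently $\delta \leq 1 - (1 - l/C_d)^{1/d}$. A Taylor expansion shows this is of order $l/(C_d d)$ for small $l/C_d$, which gives the $c(l,d)$ branch of the advertised formula for $\delta$; the $c/d$ branch simply records that one never needs $\delta$ smaller than $\Theta(1/d)$, and with this choice $\delta^{-(d+1)}$ remains a dimensional constant, so the uniform bound from Lemma \ref{Lem:paring} reads $|g(x)| \leq C_d' \|g\|_1$ on $S^\delta$.

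I do not expect any genuine obstacle here: the proof is essentially an application of the two bullets of Lemma \ref{Lem:paring} combined with an elementary volume computation for the scaled simplex. The only minor subtlety is to verify that the two requirements on $\delta$ (large enough that $\delta^{-(d+1)}$ is dimensionally bounded, small enough that the shell has $U$-measure at most $l/C_d$) are simultaneously satisfiable, which they are since for fixed $l > 0$ the admissible range $[c/d,\, c(l,d)]$ is nonempty once $d$ is large (and for small $d$ both sides are $\Theta_d(1)$, so we can absorb everything into the $d$-dependent constants).
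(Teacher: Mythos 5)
Your proposal is correct and matches what the paper intends; the authors merely say ``the almost immediate corollary follows'' and omit the argument, and your argument -- pointwise bound on $|g^-|$ from the first bullet of Lemma \ref{Lem:paring}, multiplied by the $U$-measure $1 - (1-\delta)^d$ of the boundary shell, then choosing $\delta$ small enough -- is exactly the right way to fill it in. Two small remarks. First, the uniform two-sided bound on $g|_{S^\delta}$ is not ``precisely the second bullet'': that bullet only yields the upper bound $g \le C_d\delta^{-(d+1)}\|g\|_1$, and you also need the first bullet $g \ge -C_d\|g\|_1 \ge -C_d\delta^{-(d+1)}\|g\|_1$ (using $\delta < 1$) for the lower side; your later text does use both, so this is just a matter of phrasing. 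Second, your gloss on the $\max\{c(l,d),c/d\}$ formula is not quite right as stated: a lower floor $\delta \ge c/d$ makes the shell \emph{larger}, so for small $l$ it would be incompatible with the boundary-mass bound rather than harmless. The resolution is that in the paper the corollary is only invoked with a fixed $l$ (namely $l = 1/10$), for which the admissible $\delta$ is of order $1/(C_d d)$, and the stated $\max$ formula should be read as a loose description of that $d$-dependent constant rather than a uniform-in-$l$ claim. This does not affect the validity of your derivation.
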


    %The observation that leads to this measure is based on the following idea: a convex function $g$ with zero mean and barycenter must be negative near the barycenter of $\triangle$ and positive near the boundary. One can therefore hope that the integral of $g$ in a neighborhood of the boundary gives a lower bound on the $L^1$-norm of $g$. 
    
    Using Lemma \ref{Lem:LoneNorm}, and the above results, we can estimate the $L_1$ norm $g:\triangle \to [-1,1]$. Letting $T$ be the unique affine transformation such that $T \triangle = S$, we define the shrunken simplex $\triangle_{\delta}$ by the $\triangle_{\delta}:= T^{-1}(T\triangle)_{\delta (l,d)}$, where $a=1/10$ and $\delta (a,d)$ is defined in Corollary \ref{Cor:Gao}. The proof involves analysis of several cases.
    
    \paragraph{Case 1:} $\|g\mathbbm{1}_{\triangle \setminus \triangle_{\delta}}\|_{1} \geq \frac{1}{2}\|g\|_{1}$, i.e. most of the $L_1$-norm of $g$ comes from the shell $\triangle \setminus \triangle_{\delta}$. Using Corollary \ref{Cor:Gao}, we know that
    \[
        \|g\|_{1} \geq \int_{\triangle \setminus \triangle_{\delta}} gdU(x) = \int_{\triangle \setminus \triangle_{\delta}} g^{+}dU(x) + \int_{\triangle \setminus \triangle_{\delta}} g^{-}dU(x) \geq (3/20) \cdot \|g\|_{1}.     
    \]
    Therefore it is enough to estimate the mean (scaled by $U(\triangle \setminus \triangle_{\delta})$)  of the r.v. $g(X)$ where $X \sim U(\triangle \setminus \triangle_{\delta})$, which can be done using the samples that fall in $\triangle \setminus \triangle_{\delta}$.
    %  (recall that we may assume that $\PP_n(\triangle \setminus \triangle_{\delta}) \geq 0.5\PP(\triangle \setminus \triangle_{\delta})-Cd \log(n)/n$)
    By Lemma \ref{Lem:HighPEst} above, we conclude that using these samples we have an estimator $\widehat{f}_{(1)}$ such that with probability at least $1-\delta$,
    \begin{align}
        \left|\widehat{f}_{(1)} - \int_{\triangle \setminus \triangle_{\delta}}g\,dU \right|^2 &\leq U(\triangle \setminus \triangle_{\delta})^2\frac{Cd(\sigma^2+\| g\mathbbm{1}_{\triangle \setminus \triangle_{\delta}}\|_2^2)\log(2/\delta)}{U(\triangle \setminus \triangle_{\delta})m} \nonumber \\
        &\lesssimd  \frac{(\sigma^2+\|g\|_2^2)\log(2/\delta)}{m},
        \label{l1_case1}
    \end{align}
    where we used that fact that $U(\triangle \setminus \triangle_{\delta}) \geq c_d$.
    
    \paragraph{Case 2:} If we are not in Case 1, we must have $\|g\mathbbm{1}_{\triangle_{\delta}}\|_{1} \geq \frac{1}{2}\|g\|_{1}$, i.e., most of the $L^1$-norm of $g$ comes from the inner simplex $\triangle_\delta$. Decompose $g = w_{g} + (g - w_{g})$, where $w_g = \argmin_{w \text{ affine }} \|g - w\|_{L_2(U(\triangle_{\delta}))}$ is the $L^2(\triangle_\delta)$-projection of $g$ onto the space of affine functions. Note that by orthogonality, we have \begin{equation}\label{Eq:orth_bd}
        \max(\|w_g\|_{L_2(U(\triangle_{\delta}))}, 
        \|g - w_g\|_{L_2(U(\triangle_{\delta}))}) \le \|g\|_{L_2(U(\triangle_\delta))}
    \end{equation}
    by orthogonality, while by Lemma \ref{Lem:paring}, we have 
    \begin{equation}\label{eq:l1_l2}
        \|g\|_{L_1(U(\triangle_\delta))} \le \|g\|_{L_2(U(\triangle_\delta))}\| \le C_d \|g\|_{L_1(U(\triangle_\delta))}.
    \end{equation}
    
    By the triangle inequality, we must have either $\|w_g\mathbbm{1}_{\triangle_{\delta}}\|_{1} \geq  \frac{1}{4}\|g\|_{1}$ or $\|(g - w_{g})\mathbbm{1}_{\triangle_{\delta}}\|_1 \geq \frac{1}{4}\|g\|_{1}$; we analyze each case below.
    
    \paragraph{Case 2a:} First, suppose $\|w_g\mathbbm{1}_{\triangle_{\delta}}\|_{1} \geq  \frac{1}{4}\|g\|_{1}$. Using half of the samples that fall into $\triangle_{\delta}$, we may apply Lemma \ref{Lem:LinearReg}, giving us an affine $\widehat{w}_g$ such that with probability at least  $1-\delta$,
    \[
        \|\widehat{w}_g - w_{g}\|_{2}^2 \leq C_d(\sigma^2+\|g\|_2^2)\frac{\log(2/\delta)}{m}.
        % \leq \frac{C_1d\log(n)}{\PP(\triangle)n}. 
    \]
    Writing $\bar{f}_{(2a)}:= \|\widehat{w}_{g}\|_{1}$, we conclude that
    \begin{equation}\label{l1_case2a}
        \frac{1}{4}\|g\|_{1} - C_d
        (\sigma+\|g\|_2) \sqrt{\frac{\log(2/\delta)}{m}}  \leq \bar{f}_{(2a)} \leq \|g\|_{1} + C_d(\sigma+\|g\|_2) \sqrt{\frac{\log(2/\delta)}{m}}.
    \end{equation}
    Note that the right-hand inequality does not require the assumption $\|w_g\mathbbm{1}_{\triangle_{\delta}}\|_{1} \geq  \frac{1}{4}\|g\|_{1}$.
    
    \paragraph{Case 2b:} Now suppose $\|(g - w_{g})\mathbbm{1}_{\triangle_{\delta}}\|_1 \geq \frac{1}{4}\|g\|_{1}$. To estimate $\|(g - w_{g})\mathbbm{1}_{\triangle_{\delta}}\|_1$, we will use our Lemma \ref{Lem:BiasEst}. Note that by the definition of $\triangle_{\delta}$, we may assume by Lemma \ref{Cor:Gao} that $\max \{M',M\} \leq C(d)$ (in Lemma \ref{Lem:BiasEst}). Therefore, we conclude that
    \[
     c_1(d)\|g - w_{g}\|_{1} \leq  \int_{\triangle_\delta} (g - w_g) \cdot p_{\triangle_{\delta}} \leq C_1(d)\|g - w_{g}\|_{1}.
    \]
    % \int_{\triangle_{\delta}} w_g dU_{\triangle_{\delta}}  
    By our assumption $\|(g - w_{g})\mathbbm{1}_{\triangle_{\delta}}\|_1 \geq \frac{1}{4}\|g\|_{1}$; we also have 
    $$\|(g - w_{g})\mathbbm{1}_{\triangle_{\delta}}\|_1 \le \|g\|_1 
    \le \|\widehat w_g - w_g\|_1 + \|(g - w_{g})\mathbbm{1}_{\triangle_{\delta}}\|_1$$
    So it follows that
    \[
        % - C_d (\sigma+\|g\|_2) \sqrt{\frac{\log(2/\delta)}{m}} 
    \|g\|_1 \asyd \int_{\triangle_\delta} (g-\widehat{w}_g) dp_{\triangle_{\delta}}. 
        % \leq  C_3(d)\|g-w_g\|_{1} + C_d
        % |\sigma+\|g\|_2|\sqrt{\frac{\log(2/\delta)}{m}} 
        % \leq  C_3(d)\|g\|_{1} + C_d
        % |\sigma+\|g\|_2|\sqrt{\frac{\log(2/\delta)}{m}}.
    \]
    Therefore, it is enough to estimate $\int(g -\widehat{w}_g) dp_{\triangle_{\delta}}$, using the second half of the samples that fall into $\triangle_{\delta}$.

    To do this, we simulate sampling from $p_{\triangle_{\delta}}$ given samples from $U(\triangle_{\delta})$ and their corresponding noisy samples of $g - \widehat{w}_g$. The idea is simply to use rejection sampling \citep{devroye1986nonuniform}: given a single sample $X \sim U(\triangle_{\delta})$, we keep it with probability $O_d(p_{\triangle_{\delta}}(x))$. Conditioned on keeping the sample, $X$ is distributed according to $p_{\triangle_{\delta}}$. If we are given $m/2$ i.i.d. samples from $U(\triangle_{\delta})$, then with probability $1 -e^{-c_d' m}$ the random number of samples $N$ we obtain from $p_{\triangle}$ by this method is at least $c(\alpha_d)\cdot m \geq c_1(d)m$, and conditioned on $N$, these samples are i.i.d. $p_{\triangle_{\delta}}$. We condition on this event going forward. Now, using these $N$ samples, Lemma \ref{Lem:HighPEst} gives an estimator $\bar{f}_{(2b)}$ such that 
    \begin{equation}\label{l1_case2b}
        \left|\bar{f}_{(2b)} - \int (g - \widehat{w}_g)\cdot p_{\triangle_\delta}\right| \leq C_d (\sigma + \|g\|_{2}) \sqrt{\frac{\log(2/\delta)}{m}}.
    \end{equation}
    with probability at least $1-\max\{\delta,e^{-cm}\}$.  
    
    Note that each of $\bar{f}_{(1)},\bar{f}_{(2a)},\bar{f}_{(2b)}$ is bounded from above by $C_d\|g\|_{1} + C_d(\sigma + \|g\|_{2}) \sqrt{\frac{\log(2/\delta)}{m}}$, irrespective of whether we are in the case for which the estimator was designed; this follows from \eqref{l1_case1}, \eqref{l1_case2a}, \eqref{l1_case2b}, respectively. %To see this, $\bar{f}_{(1)}$ is bounded by $C_d\|g\|_{1}$, since $U(\triangle \setminus \triangle_{\delta}) \geq c_d$. Next, for $\bar{f}_{(2)}$ it follows from fact that
    % \[
    %     \|w_g\|_{L_1(\PP_{\triangle_{\delta}})} \leq \|w_g\|_{L_2(\PP_{\triangle_{\delta}})} \leq \|g\|_{L_2(\PP_{\triangle_{\delta}})} \leq C(d)\|g\|_{2} \leq C_1(d)\|g\|_{1}
    % \]
    % where we used the fact that $\|p_{\triangle_{\delta}}\|_{\infty} \leq C_d$ and that $\|g\mathbbm{1}_{\triangle_{\delta}}\|_{\infty} \leq C_2(d)\|g\|_{1}$. Next, for $\bar{f}_{(3)}$ the fact that $p_{\triangle_{\delta}} \leq \alpha_d \cdot \PP$ gives the claim. 
    
    Finally, by using $\bar{f}_{(1)},\bar{f}_{(2a)},\bar{f}_{(2b)}$, we conclude that with probability $1-3\max\{\delta,e^{-cm}\}$ at least
    \[
        c_1(d)\|g\|_{1} -  C_d(\sigma + \|g\|_{2}) \sqrt{\frac{\log(2/\delta)}{m}} \leq \bar{f}_{(1)} + \bar{f}_{(2a)} + \bar{f}_{(2b)} \leq C_1(d)\|g\|_{1} +  C_d|\sigma + \|g\|_{2}| \sqrt{\frac{\log(2/\delta)}{m}},
    \]
    and the claim follows.
    \subsection{Proof of Lemma \ref{Lem:BiasEst}}
    % Recall that we assume w.l.o.g. that $\|f^{*}\|_{\infty} \leq L$ (where in the proof we assume that $L=1$) 
    % , also recall that the affine transformation $T$ such that $T(\triangle) = \triangle_0$ has all eigenvalues bounded below by some constant depending only on $d$.x
    % Let $\triangle_0$ be the regular simplex in $\R^d$ and consider the transformation $g \circ T^{-1}: \triangle_0 \to \mathbb R$ is a convex function that is supported on $\triangle_0$ and normalize $g \circ T^{-1}$ to be a $1$-norm convex function on the regular simplex. i.e. we also assume that $\|g\|_{2}:= \int_{\triangle_0} g(x)dU(x)$ equals to 1, where $U$ denotes the uniform measure on the regular simplex.
    Recall that it suffices to show that
    %  Note that in order to prove  \eqref{Eq:BiasEst}, it is sufficient for a convex $g:\triangle \to [-1,1]$ such that the following holds:  
    \[
            c_1(M,M') \le \int_{S} g(x)p_{S}(x)dx \le C_1(M,M').
    \]
    % In order to simply the notation we assume that $\triangle = \triangle_0$ and fix some $l \in (0,1)$ that will be defined later.
    Note that the function $g$ is convex and in particular subharmonic, i.e., for any ball $B_x$ with center $x$ contained in $S$ we have 
    $$\frac{1}{U(B_x)} \int_{B_x} g(z)dU(z) \ge g(x),$$
    where $U$ denotes the uniform measure on the regular simplex $S$.
    $g$ is non-affine and hence strictly subharmonic (as convex harmonic functions are affine), so there exists some $x$ such that for any ball $B_x \subset S$ centered on $x$, the above inequality is strict, since subharmonicity is a local property. As $g$ is convex and in particular continuous, the inequality is strict on some open set of positive measure. We obtain that for a \emph{non}-affine convex function that
    \begin{equation}\label{Eq:gp_pos}
    \begin{aligned}
    \int_S g(x) p_{S}(x)\,dU(x) &= \int_S \int_S g(x) 1_{B_y}(x)\,dU(y)\,dU(x) \\&= \int_S \left(\frac{1}{U(B_y)}\int_{B_y} g(x)\,dx\right)\,dU(y) > \int_S g(y)dU(y) = 0,
    \end{aligned}
    \end{equation}
    i.e. we showed that for a subharmonic $g$ that
    $
        \int_S g(x) p_{S}(x)\,dx > 0.
        %  - \int_\triangle g(y)dU(x) 
    $
    % where we used the fact  $\int g = 0$ is affine i.e. also harmonic, it is clear that for such $p_{\triangle}$, $\int wp_{\triangle} = \int_{\triangle} wdU(x)$. 
    
    Now, we show why  \eqref{Eq:gp_pos} actually implies the lower bound of  \eqref{Eq:BiasEst}, which is certainly not obvious a priori. However, it follows from a standard compactness argument. The set $\mathcal C$  of convex $M$-bounded, $M'$-Lipschitz functions with norm $1$ that is orthogonal to the affine functions is closed in $L^\infty(S)$, and also equicontinuous due to the Lipschitz condition. Hence, by the Arzela-Ascoli theorem it is compact in $L^\infty(S)$, and we conclude that 
    $$A = \left\{ \int_{S}g(x)p_{S}(x)dx: g \in \mathcal C\right \}$$ 
    is compact; but \eqref{Eq:gp_pos} implies that $A \subset (0, \infty)$, which finally implies the existence of $c(M,M',d) > 0$ such that $S \subset [c(M,M',d), \infty)$. As for the upper bound in \eqref{Eq:BiasEst}, it follows immediately from the boundedness of $p_{S}$, which we prove below.
    % It remains to explain how to choose the balls $B_x$ in the definition of $p$ so that $p$ is bounded and efficiently computable in the sense claimed. It turns out that the choice which perhaps first comes to mind is satisfactory: for every $x \in \triangle$.
    
     We claim that in this case \eqref{Eq:ptri_def} can be evaluated analytically as a function of $x$, though the formulas are sufficiently complicated that this is best left to a computer algebra system. Indeed, we note that $y \in S$ contributes to the integral at $x$ if and only if $x$ is closer to $y$ than $y$ is to the boundary of $S$. The regular simplex can be divided into $d + 1$ congruent cells $C_1, \ldots, C_{d+1}$ such that the points in $C_i$ are closer to the $i$-th facet of the simplex than to any other facet (in fact, $C_i$ is simply the convex hull of the barycenter of $S$ and the $i$th facet); for any $y \in C_i$, $x \in B_y$ if and only if $x$ is closer to $y$ than $y$ is to the hyperplane $H_i$ containing $C_i$. But the locus of points equidistant from a fixed point $x$ and a hyperplane is the higher-dimensional analog of an elliptic paraboloid, for which it's easy to write down an explicit equation. Letting $P_{i, x}$ be the set of points on $x$'s side of the paraboloid (namely, those closer to $x$ than to $H_i$), we obtain
    $$p_{S}(x) \propto \sum_{i = 1}^{d+1} \int_{C_i \cap P_{i, x}} \frac{dy}{v_d \cdot d(y, H_i)^d},$$
     where $v_d$ is the volume of the unit ball in dimension $d$.
     Each region of integration $C_i \cap P_{i, x}$ is defined by several linear inequalities and a single quadratic inequality, and the integrand can be written simply as $\frac{1}{y_i^d}$ in an appropriate coordinate system. It is thus clear that the integral can be evaluated analytically, as claimed.
    
    Finally, we need to show that $p_{S}(x)$ is bounded above by $C^d$. It follows by simple observation, choose $y$ to be the center of the smallest ball $y$, i.e. $1_y(x)/U(B_y)$ is maximal, then we know by triangle inequality that $x$ is $2r(B_y)$ close the boundary point that the ball $B_y$ hit.  Hence, the total volume that $x$ can be integrated is $2^dU(B_y)$, now by integrating over all possible radii the proof is complete. Note that a lower bound holds as well, to see this choose $y = x$, and a ball with radius $r(B_x)/2$, and on each point on this ball $1_{x 
    \in B_y}$ by integration, we obtain that $P_{S}(x) \geq C^{-d}$, and the proof is complete.

    \subsubsection{Proof of Lemma \ref{Lem:VerMSE} }\label{sssec:simplex_l2}
    % {Estimating \texorpdfstring{$\|g\|_2$}{||g||₂}}
    Recall that we are given a convex $L$-Lipschitz function $g$ satisfying $\|g\|_{\infty} \leq L$; by homogeneity, we may assume $L = 1$. Our goal in this subsection is to estimate $\|g\|_2$ up to polylogarithmic factors given an estimate of $\|g\|_1$, where $g:\triangle \to [-1,1]$. This part requires the additional assumption that $\|g\|_2 \ge \frac{ Cd^{1/2}\log n}{n^{1/2}}$.
    
    %  which implies in particular that that $\|g\|_1 \ge \frac{\|g\|_2^2}{\|g\|_\infty} \ge C \frac{d(\log n)^{2}}{n}$.
    
    %For this part it is more convenient to adopt a different normalization; recalling that $g$ is bounded by $1$ and that we are only need consider $g$ having $L^2$-norm at least $\sqrt{\frac{\log n}{n}}$ -- implying that $\|g\|_1 \ge \frac{\|g\|_2^2}{\|g\|_\infty} \ge \sqrt{\frac{\log n}{n}}$ -- we multiply $g$ by a constant, so that $1 = \|g\|_{1} \leq \|g\|_{2} \le \|g\|_\infty \le C\sqrt{n/\log(n)}$. 

    For this section, we will need the following classical result about the floating body of a simplex \citep{baranylarman1988,schutt1990convex}.
    \begin{lemma}\label{Lem:SimpFloat}
    For a simplex $S$ and $\eps \in (0,1)$. let $S_{\eps}$ be its $\eps$-convex floating body, defined as 
    \[
       S_{\eps} := \bigcap \{K: K \subset S \text{ convex },\vol(S\setminus K) \leq \eps\vol(S)\},
    \]
    and let $S(\epsilon) = S \backslash S_\eps$ be the so-called wet part of $S$. Then $\vol(S(\epsilon)) \le C_d \eps \log(\eps^{-1})^{d - 1} \vol(S)$.
    \end{lemma}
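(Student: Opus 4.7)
The plan is to exploit the affine invariance of the floating-body construction: any simplex is affinely equivalent to the standard simplex $S = \{x \in \R^d_{\ge 0}: \sum_i x_i \le 1\}$ of volume $1/d!$, and both $\vol(S(\eps))/\vol(S)$ and the bound in the lemma are affine invariants. After this reduction, I will decompose the wet part $S(\eps)$ according to the combinatorial type of each defining cap --- namely, which subset of the $d+1$ vertices lies on the ``small'' side of the cutting hyperplane. Summed over the at most $2^{d+1}$ possible types, each contribution will be shown to be $O_d(\eps \log(1/\eps)^{d-1})\cdot\vol(S)$.

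The main case is that of single-vertex caps, say near $v_0 = 0$. Such a cap is itself a sub-simplex with vertices $0, (c/a_1) e_1, \ldots, (c/a_d) e_d$ of volume $c^d/(d! \prod a_i)$, and a point $x$ lies in some such cap of volume $\le \eps\vol(S)$ iff there exist $a_i > c > 0$ with $\sum a_i x_i \le c$ and $c^d/\prod a_i \le \eps$. After normalizing $c = 1$, maximizing $\prod a_i$ subject to $\sum a_i x_i \le 1$ via AM-GM applied to $b_i = a_i x_i$ identifies the wet region near $v_0$ as a subset of $W_0 := \{x \in S: \prod_i x_i \le C_d \eps\}$. To bound $\vol(W_0)$, I use the substitution $x_i = e^{-t_i}$, whose Jacobian is $e^{-\sum t_i}$, and then pass to radial coordinates $t_i = s u_i$ with $u$ on the $(d-1)$-simplex, yielding
\[ \vol\{x \in [0,1]^d: \prod_i x_i \le \eta\} = \frac{1}{(d-1)!} \int_{\log(1/\eta)}^\infty s^{d-1} e^{-s}\, ds \le C'_d \eta \log(1/\eta)^{d-1} \]
for small $\eta$. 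Applying this with $\eta = C_d \eps$ yields $\vol(W_0) \le C''_d \eps \log(1/\eps)^{d-1}$; summing over all $d+1$ vertices handles the single-vertex caps.

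Finally, multi-vertex caps --- those whose small side contains a face $F$ of $S$ of dimension $k \ge 1$ --- will, somewhat counterintuitively, still be of arbitrarily small volume (e.g.\ a thin slab parallel to a low-dimensional face of $S$), so they cannot be discarded. I would treat these by an inductive decomposition: integrating out the $d-k$ transverse directions to $F$ reduces the volume estimate to a floating-body bound on $F$ together with a one-dimensional logarithmic factor per transverse direction, giving a contribution of order $\eps \log(1/\eps)^{d-1-k}\cdot\vol(S)$, which for $k \ge 1$ is negligible compared to the vertex contributions. Summing over the $O_d(1)$ faces of $S$ then yields the claimed bound.

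The main obstacle is the last step --- carrying out the transverse-integration argument rigorously and uniformly across all face types, and verifying that the combinatorial classification into ``types'' indexed by faces of $S$ genuinely exhausts the wet part. The single-vertex computation gives the correct leading order and illustrates why the logarithmic factor appears; the combinatorial/inductive bookkeeping for higher-dimensional faces is where most of the technical work resides.
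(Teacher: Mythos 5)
The paper does not prove this lemma --- it cites it as classical (Bárány--Larman 1988, Schütt 1990) --- so there is no in-paper proof to compare against; I am evaluating your proposal on its own merits.

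Your affine reduction and the single-vertex-cap computation are both sound: the substitution $x_i = e^{-t_i}$ gives $\vol\{x \in [0,1]^d : \prod_i x_i \le \eta\} = \frac{1}{(d-1)!}\int_{\log(1/\eta)}^{\infty} s^{d-1}e^{-s}\,ds \le C_d\,\eta\log(1/\eta)^{d-1}$ for small $\eta$, which is exactly what is needed. (Incidentally, the bound as stated becomes vacuously false as $\eps \to 1^-$, since $\log(1/\eps) \to 0$ while the wet part tends to $S$; read the lemma with an implicit restriction such as $\eps \le 1/2$, which is all the paper uses.)

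The genuine gap is the multi-vertex case, which you acknowledge is left as a sketch. But the inductive transverse-integration you propose is the wrong tool: there is a one-step argument showing that the decomposition into ``combinatorial types'' is unnecessary, because every cap of small relative volume is already contained in one of your single-vertex regions. Write $S = \{x \ge 0 : \sum_i x_i \le 1\}$ and let $C = S \cap \{z : \langle a, z\rangle \le 1\}$ be any cap with $\vol(C) \le \eps\vol(S)$. Among the $d+1$ vertices $v_0 = 0, v_1 = e_1, \ldots, v_d = e_d$, let $v_k$ minimize $\langle a, v_k\rangle$; this is automatically a cut-off vertex, and after applying the affine symmetry of $S$ exchanging $v_k$ with $v_0$ and rescaling $a$ so the right-hand side is again $1$, the normal becomes componentwise nonnegative with $I := \{i : a_i > 1\}$ nonempty. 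Then $C$ contains the corner sub-simplex $\conv\bigl(\{0\} \cup \{e_j : j \notin I\} \cup \{a_i^{-1}e_i : i \in I\}\bigr)$ of volume $\frac{1}{d!}\prod_{i\in I}a_i^{-1}$, whence $\prod_{i\in I}a_i \ge \vol(S)/\vol(C) \ge 1/\eps$. For any $x \in C$, AM--GM applied to the nonnegative reals $(a_i x_i)_{i\in I}$, which sum to at most $1$, gives $\prod_{i\in I}(a_i x_i) \le |I|^{-|I|} \le 1$, so $\prod_{i\in I} x_i \le \prod_{i\in I}a_i^{-1} \le \eps$, and (since $x_j \le 1$ for $j \notin I$) $\prod_{i=1}^d x_i \le \eps$. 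Undoing the affine change, this says exactly $x \in W_k := \{x \in S : \prod_{i\ne k} y_i(x) \le \eps\}$, with $y_i$ barycentric coordinates. Hence $S(\eps) \subseteq \bigcup_{k=0}^d W_k$, and your volume bound on each $W_k$ finishes the proof; no induction over faces is required, and ``verifying that the types exhaust the wet part'' is simply the observation that every nonempty cap cuts off at least one vertex.
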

    
    We also note that for any particular $\eps$ and $x \in S$ one can check in polynomial time whether $x \in S(\eps)$: indeed, letting 
    \begin{align*}
    H_{x, u}^+ &= \{y \in \mathbb R^n: \langle y, u\rangle \ge \langle x, u\rangle\} \\
    H_{x, u} &= \partial H_{x, u}^+ = \{y \in \mathbb R^n: \langle y, u\rangle = \langle x, u\rangle\},
    \end{align*}
    the function $u \mapsto U(S \cap H_{x, u}^+)$ is smooth on $S^{d - 1}$ outside of the closed, lower-dimensional subset $A$ where $H_{x, u}$ is not in general position with respect to some face of $u$, and, moreover, is given by an analytic expression in each of the connected components $C_i$ of $S^{d - 1} \backslash A$. It can thus be determined algorithmically whether $\min_i \inf_{x \in C_i}  U(S \cap H_{x, u}^+) \le \epsilon$, i.e., whether $x \in S(\epsilon)$.
    
     Let $v = \mathbb P(S)$, and let $i_{min} = \min(\lfloor\log_2(C_d \log(n)^{d + 1} v^{-1}) \rfloor, 0)$; note that since $\|g\|_{1} \geq v \ge \frac{\log(n)^d}{n}$, $|i_{min}| \le C \log n$. Set $V = g^{-1}((-\infty, 2^{i_{min}}])$, and for $i = i_{min}, i_{min} + 1, \ldots, 0$, set $U_i = g^{-1}((2^i, 1])$. Note that $V$ is convex, while each $U_i$, $i \ge 0$, is the complement of a convex subset of $S$.
    
    We will use the following lemma:
    \begin{lemma}\label{Lem:LEVELSET} For $g$ and $V, U_i$ as defined above, at least one of the following alternatives holds:
    %*** ELI I changed it to $(d+1)/2$ *** ]
    \begin{enumerate}
        \item
        $
        c_d\log(n)^{-d+1/2}\|g\|_{2} \leq v^{-\frac{1}{2}} \|g\|_1 \leq \|g\|_2.
       $
        \item  There exists $i_0 \in [i_{min}, 0]$ such that $2^{-i_0} \ge C_d(\log n)^{d-1} \frac{\|g\|_1}{\PP(S)}$ and
        \begin{equation}\label{Eq:i0}
        c\log(n)^{-1/2} \|g\|_{2} \leq \PP(U_{i_0})^{-1/2}\int_{U_{i_0}}g\,d\PP.
        \end{equation}
        % \|g\|_2/(2\sqrt{\log(n)}) \leq 2^{i_0}\PP(T_{i_0})^{1/2} \leq \|g\|_{2}.
    \end{enumerate}
    \end{lemma}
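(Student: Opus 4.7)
The plan is to dyadically decompose $\|g\|_2^2$ according to the level sets of $g$ and then apply pigeonhole. Writing
\[ \|g\|_2^2 = \int_V g^2\,d\PP + \sum_{i=i_{min}}^{-1}\int_{U_i\setminus U_{i+1}} g^2\,d\PP, \]
we have a sum of $O(\log n)$ terms thanks to the a priori bound $|i_{min}| \le C\log n$. By pigeonhole at least one summand carries a fraction $\gtrsim 1/\log n$ of $\|g\|_2^2$; the two alternatives of the lemma will correspond respectively to whether the dominant contribution comes from the integral on $V$ or from one of the dyadic annuli.

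If the $V$-term dominates, I will estimate $\int_V g^2\,d\PP$ via pointwise bounds on $g$: its positive part is at most $2^{i_{min}}$ by definition of $V$, while its negative part is bounded by $C_d\|g\|_1/\PP(S)$ via the convexity estimate of Lemma \ref{Lem:paring}. Combining, $\int_V g^2\,d\PP \le 2^{2i_{min}}\,v + C_d^2 \|g\|_1^2/v$, and the definition of $i_{min}$ is calibrated so that, using the standing lower bound $\|g\|_1 \ge v$, the first term is dominated by the second. Multiplying by the pigeonhole loss and taking square roots then yields alternative 1.

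If instead some annulus $U_{i_0}\setminus U_{i_0+1}$ dominates, the pointwise bounds $2^{i_0}<g\le 2^{i_0+1}$ there turn dominance into $4\cdot 2^{2i_0}\PP(U_{i_0})\ge c\|g\|_2^2/\log n$, hence $2^{i_0}\PP(U_{i_0})^{1/2}\ge c'\|g\|_2/\sqrt{\log n}$, and using $g>2^{i_0}$ throughout $U_{i_0}$ gives
\[ \PP(U_{i_0})^{-1/2}\int_{U_{i_0}} g\,d\PP \;\ge\; 2^{i_0}\PP(U_{i_0})^{1/2} \;\ge\; c'\log(n)^{-1/2}\|g\|_2, \]
which is the inequality in alternative 2. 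To secure the side condition $2^{-i_0}\ge C_d\log(n)^{d-1}\|g\|_1/\PP(S)$ I apply the floating-body estimate (Lemma \ref{Lem:SimpFloat}) to the convex set $U_{i_0}^c=\{g\le 2^{i_0}\}$: if $2^{i_0}$ were too large, $U_{i_0}^c$ would occupy nearly all of $S$ and $U_{i_0}$ would be forced into the wet part, of volume at most $C_d\epsilon\log(\epsilon^{-1})^{d-1}v$ with $\epsilon=\PP(U_{i_0})/v$; combined with Markov's bound $\PP(U_{i_0})\le\|g\|_1/2^{i_0}$, this caps the contribution of such high-level annuli strictly below the pigeonhole share, so they cannot be the dominant term and the dominant $i_0$ automatically satisfies the side condition. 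The main obstacle is the polylogarithmic bookkeeping: aligning the $\log(\epsilon^{-1})^{d-1}$ loss from the floating body with the $\log(n)^{d-1/2}$ appearing in alternative 1, so that the two alternatives together cover every regime of $(\|g\|_1,\|g\|_2,v)$.
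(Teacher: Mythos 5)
Your overall plan is sound and matches the paper's in spirit: dyadically decompose $\|g\|_2^2$ along the level sets $T_i = U_i \setminus U_{i+1}$, apply pigeonhole over the $O(\log n)$ pieces, and match the two alternatives to ``$V$ dominates'' versus ``some annulus dominates.'' Your treatment of the $V$-term (pointwise control of the positive part via $2^{i_{\min}}$ and of the negative part via Lemma~\ref{Lem:paring}) is essentially what the paper does, modulo the cosmetic difference that the paper first splits $g=g_-+g_+$ and argues that $g_-$ alone already forces alternative~(1). Where you and the paper genuinely part ways is the handling of the side condition on $i_0$, and this is where your argument has a real gap.

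You claim that annuli $T_{i_0}$ \emph{failing} the side condition ``cannot be the dominant term'' because the floating-body bound plus Markov caps their contribution strictly below the pigeonhole share. This does not follow. Markov gives $\PP(U_{i_0}) \le \|g\|_1/2^{i_0}$, so $\int_{T_{i_0}} g^2 \le 4\cdot 2^{2i_0}\PP(U_{i_0}) \le 4\cdot 2^{i_0}\|g\|_1$; but when the side condition fails $2^{i_0}$ is \emph{large} (bounded below by $v/(C_d(\log n)^{d-1}\|g\|_1)$, and only trivially bounded above by $1$), so this upper bound degenerates to $\approx \|g\|_1$, which in general is not $o(\|g\|_2^2/\log n)$. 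Indeed a convex $g$ whose positive mass lives near $\partial S$ can have its $L^2$-energy concentrated in exactly these top annuli. The floating-body estimate does not rescue this: taking $\epsilon = \PP(U_{i_0})/v$ and bounding $\vol(S(\epsilon))$ just recovers (up to the polylog factor) that $U_{i_0}$ sits in a set of roughly its own size; it gives no independent upper bound on $\PP(U_{i_0})$, so the reasoning is circular. The paper does not try to rule this regime out --- it embraces it. After the pigeonhole produces $i_0$, the paper splits on whether $2^{-i_0}$ is above or below the threshold $C_d(\log n)^{d-1}\|g\|_1/v$; if it is above, the already-derived inequality is alternative~(2) as stated; if it is below, the same inequality is fed back through Markov and the crude bound $\PP(U_{i_0}) \le v$ to produce alternative~(1). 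So the ``bad'' $i_0$'s are not excluded, they are absorbed into alternative~(1). You need to add that branch, and once you do the floating-body lemma drops out of this step entirely --- in the paper it is invoked only later, in the proof of Lemma~\ref{Lem:Float}, which is where the $(\log n)^{d-1}$ in the side condition is actually consumed; the side condition in Lemma~\ref{Lem:LEVELSET} is simply stated in the form that makes that later lemma go through.
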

    
    The proof of this lemma appears at the end of this subsection.
    
    If alternative (1) of the lemma holds, the $L_1$-norm of $g$ is only a polylogarithmic factor away from the $L_2$-norm (up to normalizing by the measure of $S$, which is known to us). Therefore, we may use the $L_1$-estimator of the previous subsection and estimate the $L_2$ norm of $g$, up to a larger polylogarithmic factor, as we will see below.
    
    % For this purpose, we will first prove the following lemma:
    % \begin{lemma}\label{Lem:FromShToLvl}
    % Suppose alternative (2) of Lemma \ref{Lem:LEVELSET} holds, and let $i_0 \in [i_{min}, i_{max}]$ satisfying \eqref{Eq:i0}. Then, the following holds:
    %  \[
        % \frac{1}{2\log(n)^{1/2}} \|g\|_{2} \leq \PP(U_{i_0})^{-1/2}\int_{U_{i_0}}g\,d\PP_{U_{i_0}}
    %  \]
    %  Also, for all $i\in [i_{min}, i_{max}]$ we have that
    %  \[
    %     \PP(U_i)^{1/2}\int_{U_i}g\,d\PP_{U_{i}} \leq \|g\|_{2}.
    %  \]
    % \end{lemma}
    We must therefore consider what happens when alternative (2) of Lemma \ref{Lem:LEVELSET} holds. If we could estimate the integral of $g$ over $U_{i_0}$, we'd be done, but neither the index $i_0$ nor the set $U_{i_0}$ are given to us. So we make use of the fact that each such $U_{i_0}$, being the complement of a convex subset of $\triangle$, is contained in the wet part $S(\mathbb P(U_i))$, which has volume at most $C_d\log(n)^{d-1}\PP(U_i)$ by Lemma \ref{Lem:SimpFloat}. We will show in the next lemma that this replacement costs us a $C_d \log(n)^{d/2}$ factor in the worst case.
    % two lemmas. For this first one, we define for each $i \in \mathbbm{N}$,  
    % $
    %     U_i := \{ g \geq 2^{i}\}. 
    % $
    % \begin{lemma}
    % \end{lemma}
    
    More precisely, let $\epsilon_j = 2^{-2j}$, and let $S(\eps_j)$ be the corresponding wet part of $S$, as defined in Lemma \ref{Lem:SimpFloat}. Then we have the following:
    
    \begin{lemma}\label{Lem:Float} With $g$ as above, we have
     \begin{equation*}
        \max_{j \in [i_{min}, 0]} \PP(S(\eps_j))^{-1/2}\int_{S(\eps_j)}g d\PP \leq \|g\|_{2}, 
    \end{equation*}
    and moreover, if alternative (2) of Lemma \ref{Lem:SimpFloat} holds, then there exists $j$ such that $\PP(S(\eps_j)) \ge \frac{Cd\log n}{n}$ and 
    $$c_d\log(n)^{-d/2}\|g\|_{2}\leq \PP(S(\eps_j))^{-1/2}\ \int_{S(\eps_j)}g d\PP.$$
    \end{lemma}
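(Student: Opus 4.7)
The upper bound is immediate from Cauchy--Schwarz: for any measurable $A \subseteq \triangle$, $\int_A g\,d\PP \leq \PP(A)^{1/2}\|g\mathbbm{1}_A\|_2 \leq \PP(A)^{1/2}\|g\|_2$, so dividing through and taking $A = S(\eps_j)$ gives the stated upper bound uniformly in $j$.

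For the lower bound, assume alternative (2) of Lemma~\ref{Lem:LEVELSET} with index $i_0$. My plan is to pick $j$ so that $S(\eps_j)$ just barely contains $U_{i_0}$, and then argue that the residual region $S(\eps_j)\setminus U_{i_0}$ contributes negligibly. First take $\eps_j = 2^{-2j}$ to be the smallest dyadic value at least $\PP(U_{i_0})$, so $\eps_j < 4\PP(U_{i_0})$. Convexity of $g$ makes the sublevel set $\triangle\setminus U_{i_0} = \{g \leq 2^{i_0}\}$ convex, with complement $U_{i_0}$ of relative measure at most $\eps_j$, so it is a valid candidate in the intersection defining $S_{\eps_j}$; hence $U_{i_0} \subseteq S(\eps_j)$. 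Lemma~\ref{Lem:SimpFloat} then yields $\PP(S(\eps_j)) \leq C_d \eps_j \log(\eps_j^{-1})^{d-1} \leq C_d' \log(n)^{d-1}\PP(U_{i_0})$, where I use $\PP(U_{i_0}) \geq c\log(n)^{-1}\|g\|_2^2 \geq c' d\log n/n$ (the first inequality follows from alternative (2) together with $\int_{U_{i_0}} g \leq \PP(U_{i_0})$; the second from $\|g\|_2^2 \geq Cd\log(n)^2/n$). This same chain simultaneously certifies $\PP(S(\eps_j)) \geq Cd\log n/n$.

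Next, split $\int_{S(\eps_j)} g\,d\PP = \int_{U_{i_0}} g\,d\PP + \int_{S(\eps_j)\setminus U_{i_0}} g\,d\PP$. Alternative (2) handles the first term. For the second, I invoke Lemma~\ref{Lem:paring}, transported to $\triangle$ via the affine map onto the regular simplex (under which uniform measure and the $L^1$ norm are compatible), yielding the pointwise bound $g \geq -C_d^{(1)}\|g\|_1$ on $\triangle$. Combined with $\|g\|_1 \leq 2^{i_0}/(C_d^{(2)}(\log n)^{d-1})$ from the first part of alternative (2), this gives the \emph{pointwise} lower bound $g \geq -(C_d^{(1)}/C_d^{(2)})\cdot 2^{i_0}/(\log n)^{d-1}$ uniformly on $\triangle$. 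Choosing the constant $C_d^{(2)}$ in Lemma~\ref{Lem:LEVELSET} large enough that $C_d^{(1)}C_d'/C_d^{(2)} \leq 1/2$, I obtain $\int_{S(\eps_j)\setminus U_{i_0}} g\,d\PP \geq -\tfrac{1}{2}\cdot 2^{i_0}\PP(U_{i_0})$. Since $g > 2^{i_0}$ on $U_{i_0}$, $\int_{U_{i_0}} g\,d\PP \geq 2^{i_0}\PP(U_{i_0})$, and a short case analysis (according to whether $2^{i_0}\PP(U_{i_0})^{1/2}$ is below or above $c\log(n)^{-1/2}\|g\|_2$) delivers $\int_{S(\eps_j)} g\,d\PP \geq (c/2)\log(n)^{-1/2}\|g\|_2\PP(U_{i_0})^{1/2}$. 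Substituting the bound $\PP(U_{i_0})^{1/2} \geq (C_d')^{-1/2}\log(n)^{-(d-1)/2}\PP(S(\eps_j))^{1/2}$ then closes the argument.

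The main obstacle is obtaining a \emph{pointwise} lower bound on $g$ over $S(\eps_j)\setminus U_{i_0}$ strong enough that the residual cannot swamp the positive contribution from $U_{i_0}$: the trivial bound $g \geq -1$ is far too crude, since $\PP(S(\eps_j)\setminus U_{i_0})$ is comparable to $\PP(U_{i_0})$ up to polylog factors. The resolution is that the $2^{-i_0}$ condition in alternative (2) of Lemma~\ref{Lem:LEVELSET} secretly forces $\|g\|_1$ to be very small, and convexity (via Lemma~\ref{Lem:paring}) propagates this smallness into the desired pointwise bound $g \gtrsim -2^{i_0}/\log(n)^{d-1}$. This is exactly why alternative (2) couples the $L^1$-type constraint on $2^{-i_0}$ with the Cauchy--Schwarz-type lower bound, and why both hypotheses must be used in tandem.
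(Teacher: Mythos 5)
Your proof follows essentially the same route as the paper's: Cauchy--Schwarz for the upper bound, then choosing $\eps_j$ dyadically close to $\PP(U_{i_0})$, observing $U_{i_0}\subseteq S(\eps_j)$ because $\triangle\setminus U_{i_0}$ is a convex sublevel set, invoking Lemma~\ref{Lem:SimpFloat} for $\PP(S(\eps_j))\lesssim (\log n)^{d-1}\PP(U_{i_0})$, and controlling the residual $\int_{S(\eps_j)\setminus U_{i_0}} g$ via the pointwise bound $g\gtrsim -C_d\|g\|_1$ from Lemma~\ref{Lem:paring} combined with the $L^1$-smallness encoded in alternative~(2). Your final ``case analysis'' is a slightly roundabout way of closing: since you already have $\int_{S(\eps_j)\setminus U_{i_0}} g \geq -\tfrac12\,2^{i_0}\PP(U_{i_0}) \geq -\tfrac12\int_{U_{i_0}} g$, it is cleaner to conclude directly that $\int_{S(\eps_j)} g \geq \tfrac12\int_{U_{i_0}} g$, and then feed this into $\PP(S(\eps_j))^{-1/2}\geq c(\log n)^{-(d-1)/2}\PP(U_{i_0})^{-1/2}$ together with alternative~(2) to land on the bound without casing on the sign of $2^{i_0}\PP(U_{i_0})^{1/2} - c\log(n)^{-1/2}\|g\|_2$ --- this is exactly what the paper does. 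One small heads-up: alternative~(2) in the paper is typeset as $2^{-i_0}\geq C_d(\log n)^{d-1}\|g\|_1/\PP(S)$, whereas you used the form $\|g\|_1\lesssim 2^{i_0}/(\log n)^{d-1}$; the latter (with $2^{i_0}$, not $2^{-i_0}$) is what the downstream argument actually requires, and appears to be the intended reading, but you should flag the discrepancy rather than silently correct it.
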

    Using the last lemma, we can construct an estimator for $\|g\|_2$ that is at most a polylogarithmic factor away from the true value, whether we are in case (1) or case (2) of Lemma \ref{Lem:LEVELSET}. 
    
    Indeed, note that we if alternative (2) holds, we have $\mathbb P(S_{\epsilon_j}) \ge \frac{Cd\log n}{n}$ and hence, as in Section 2, we can assume by a union bound that the number of sample points falling in $S(\epsilon_j)$ is proportional to $\PP(S(\epsilon_j))$. Hence, for each $j$ the estimation of $\int_{S(\epsilon_j)} g\,d\PP_{S(\epsilon_j)} =\PP(S(\epsilon_j))^{-1}\int_{S(\epsilon_j)} g\,d\PP $ can be done in a similar fashion as in \S \ref{sssec:inner_simplex}, with an additive deviation that is proportional to $\sqrt{\frac{\log(2/\delta)}{n\PP(S(\eps_j))}}$. However, since we need to estimate $\PP(S(\epsilon_j))^{-\frac{1}{2}} \int_{S(\epsilon_j)} g\,d\PP$, we can multiply it by $\sqrt{\PP(S(\epsilon_j))}$, and obtain the correct deviation of $O(\sqrt{\log(2/\delta)/(\PP(S)n)})$ (as usual, we work on the event $\PP(S(\eps_j))/2 \leq  \PP_n(S(\eps_j))$, which holds with probability at least $1-n^{-2d}$).
    
    We conclude that the maximum over $j \in [-c\log(n) \leq i_{min}, 0]$, estimators of the means of the random variables $\PP(S(\epsilon_j))^{\frac{1}{2}}\int_{S(\eps_j)}g\, d\PP_{S(\eps\triangle_i)}$ and the $L_1$ estimator of the above sub-sub section give the claim.  
    
    % Therefore, we conclude that by taking the max over $ 1 \leq j \leq C\log(n)$ over the estimators of $$ our $L_1$ , estiators of the last 

    % \[
    %      \|g\|_2/(4\log(n)) \leq \|g\mathbbm{1}_{L_0}\|_{1} \leq \|g\mathbbm{1}_{L_0} \|_{2} \leq   C_1(d)\log(n)^{\frac{d-1}{2}}\|g\mathbbm{1}_{L_0}\|_{1} \leq C_1(d)\log(n)^{\frac{d-1}{2}}\|g\|_{1}.
    % \]
    % Now, 
    % and we know that
    % \[
    %     L_i \subset S \setminus S^{\ep\triangle_i}
    % \]
    % and we will show that
    % \begin{equation}\label{Eq:Float}
    %      \|g\|_{2} \leq A_i:= C_d\log(n)^{(d-1)/2+1}\sqrt{\vol(S \setminus S^{\ep\triangle_i})}\int_{S \setminus S^{\ep\triangle_i}}gd\PP  \leq \|g\|_2\log(n)^{d}.
    % \end{equation}
    % If \eqref{Eq:Float} holds, and the first case holds, then we can derive an estimate $\|g\|_2$ by considering
    % \[
    %   \max_{ 0 \leq i \leq \log(n)/2}A_i .
    % \]
    % In the remaining of this \S, we prove  \eqref{Eq:Float}.
    % \paragraph{Proof of  \eqref{Eq:Float}} First, note that it is enough to show that 
    % \[
    %     \sqrt{\vol(S \setminus S^{\ep\triangle_i})}\int_{S \setminus S^{\ep\triangle_i}}g^{+}d\PP
    % \]
    % to see this, recall that by Lemma ** that
    % $
    %     g \geq -C_d\|g\|_{1}
    % $
    % and by Markov's inequality we know that $\vol(U_i) $ ***.
    % Here, we provide the deferred proofs of the lemmas.
    \begin{proof}[Proof of Lemma \ref{Lem:LEVELSET}]
    Let $g_- = \min(g, 0)$, $g_+ = \max(g, 0)$, so that $\|g\|_2^2 = \|g_-\|^2 + \|g_+\|^2$. 
    
    We claim that alternative (1) holds if $\|g_-\|_2^2 \ge \frac{1}{2}\|g\|_2^2$. Indeed, by Lemma \ref{Lem:paring}, $g_- \ge -C_d v^{-1} \|g\|_1$, which immediately yields 
    $$\int_S g_-^2\,d\mathbb P \le -C_d v^{-1} \|g\|_1 \cdot \int g_-\,d\mathbb P \le -C_d v^{-1} \|g\|^2_1$$
    i.e.,
    $$v^{-1/2}\|g\|_1 \ge c_d \|g_-\|_2 \ge c_d' \|g\|_2.$$ 
    Note that by Jensen's inequality we have that $v^{-1/2}\|g\|_{1} \leq \|g\|_{2}$.
    Otherwise, we have $\|g_+\|^2 \ge \frac{1}{2} \|g\|_2^2$. Let $T_i = U_i \backslash U_{i + 1} = g^{-1}((2^i, 2^{i + 1}])$. We have 
    $$\frac{1}{2} \|g\|_2^2 \le \|g_+\|_2^2 \le  \sum_{i = -\infty}^{0} 2^{2(i + 2))} \mathbb P(T_i).$$
    By our assumption $\|g\|_2^2 \ge C\frac{\log n}{n}$ and the fact that $v \le 1$, the terms in the sum with $i \le i_{min} = \log \left(C\frac{\log n}{n}\right) + 2$ cannot contribute more than half of the sum, so we have .
    $$\frac{1}{4} \|g\|_2^2 \le \|g_+\|_2^2 \le  \sum_{i = i_{min}}^{0} 2^{2(i + 2))} \mathbb P(T_i).$$
    
    Hence there exists $i_0 \in [i_{min}, 0]$ such that 
    $$\frac{\|g\|_2^2}{4\log n} \le 2^{2(i_0 + 2))} \mathbb P(T_{i_0}) \le 4 \min_{x \in U_i} g(x)^2 \cdot \mathbb P(U_i) \le 4\mathbb P(U_{i_0})^{-1} \left(\int_{U_i} g\,d\mathbb P\right)^2,$$
    or $$c\log(n)^{-\frac{1}{2}} \|g\|_2 \le \mathbb P(U_{i_0})^{-\frac{1}{2}} \int_{U_{i_0}} g\,d\mathbb P.$$
    
    We consider two cases: either $2^{-i} \ge C (\log n)^{d-1}  v^{-1} \|g\|_1$, or  $2^{-i} \le C (\log n)^{d-1}  v^{-1} \|g\|_1$. The first case leads immediately to alternative (2), while in the second case we have 
    \begin{align*}
        c \log(n)^{-\frac{1}{2}} \|g\|_2 \le \mathbb P(U_{i_0})^{-\frac{1}{2}} \int_{U_{i_0}} g\,d\mathbb P \le 4 C (\log n)^{d-1}  v^{-1} \|g\|_1 \cdot \mathbb P(U_{i_0})^{\frac{1}{2}} \le C (\log n)^{d-1} \|g\|_1 v^{-\frac{1}{2}},
    \end{align*}
    which is another instance of alternative (1).
    
    As for the right-hand inequality in alternative (1), this is simply Cauchy-Schwarz: $\left(\int |g|\,d\mathbb P\right)^2 \le \int g^2\,d\mathbb P \cdot v$.
    \end{proof}
    % \begin{proof}[Proof of Lemma \ref{Lem:FromShToLvl}]The first statement follows immediately from Lemma \ref{Lem:LEVELSET}, using the fact that $T_{i_0} \subset U_{i_0}$ and $g \ge 2^{i_0}$ on $U_{i_0}$.
    
    % The second statement is simply Cauchy-Schwarz: we have 
    % \begin{align*}\int_{U_i} g\,d\mathbb P_{U_i} \le \left(\int_{U_i} g^2\,d\mathbb P_{U_i}\right)^{\frac{1}{2}} \left(\int_{U_i} 1\,d\mathbb P_{U_i}\right)^{\frac{1}{2}} \le \|g\|_2 \cdot \mathbb P(U_i)^{\frac{1}{2}}.
    % \end{align*}
    
    % \end{proof}
    
    \begin{proof}[Proof of Lemma \ref{Lem:Float}]
    The first inequality is again Cauchy-Schwarz: for any subset $A$ of $S$, we have 
    \begin{align*}\int_{A} g\,d\mathbb P \le \left(\int_{A} g^2\,d\mathbb P\right)^{\frac{1}{2}} \left(\int_{A} 1\,d\mathbb P\right)^{\frac{1}{2}} \le \|g\|_2 \cdot \mathbb P(A)^{\frac{1}{2}}.
    \end{align*}
    
    As for the second statement, first note that since $\|g\|_{\infty} \leq 1$ and we have 
    $$c\log(n)^{-1/2} \|g\|_{2} \leq \PP(U_{i_0})^{-1/2}\int_{U_{i_0}}g\,d\PP \le \PP(U_{i_0})^{\frac{1}{2}}.$$
    Let $j= \lceil \log \PP(U_{i_0})\rceil \ge i_{min}$, $\epsilon_j = 2^j$, so that $U_{i_0} \subset S(\epsilon_j)$ and 
    $$\PP(S_{\epsilon_j}) \le C \PP(U_{i_0}) \log(\PP(U_{i_0})^{-1})^{d - 1} \le C \PP(U_{i_0}) (\log n)^{d - 1}.$$ 
    Recalling again that by \eqref{Lem:paring}, $g \ge -C \|g\|_1 \PP(S)^{-1}$, we have
    \begin{align*}\int_{S(\epsilon_j)} g\,d\mathbb P - 2^{-1}\int_{U_{i_0}} g\,d\mathbb P &\geq 2^{-1}\int_{U_{i_0}} g\,d\mathbb P + \int_{S(\epsilon_j)\backslash U_{i_0}} g\,d\mathbb P \\
    &\ge \PP(U_{i_0}) 2^{i_0} - \PP(S(\epsilon_j)) \cdot C \|g\|_1 \PP(S)^{-1} \\
    &\ge \PP(U_{i_0}) \left(2^{i_0} - C_d(\log n)^{d - 1} \|g\|_1 \PP(S)^{-1}\right) \\
    &\ge c\cdot \PP(U_{i_0})\cdot 2^{i_0} > 0,
    % c_1 \int_{U_{i_0}} g\,d\mathbb P,
    % \geq c\log(n)^{-1/2}\|g\|_{2}\PP(U_{i_0})^{1/2},
    \end{align*}
    where we used our assumption on $i_0$ in the last line. Therefore, by the last two inequalities
    $$\mathbb P(S(\epsilon_j))^{-\frac{1}{2}} \int_{S(\epsilon_j)} g\,d\mathbb P \ge c(\log n)^{-(d-1)/2} \mathbb P(U_{i_0})^{-\frac{1}{2}} \int_{U_{i_0}} g\,d\mathbb P \ge c(\log n)^{-d/2} \|g\|_2,$$
    as claimed. Finally, note that by the assumptions of $\|g\|_2^2 \ge  \frac{Cd(\log n)^2}{n}$ and $\|g\|_{\infty} \leq 1$, we obtain that
    \begin{equation*}
     \PP(S(\eps_j)) \geq \PP(U_{i_0}) \ge (c\log(n)^{-\frac{1}{2}} \|g\|_2/\sqrt{\|g\|_{\infty}})^2 \ge C_1d\frac{\log n}{n}.
    \end{equation*}
    \end{proof}
    \section{Sketch of Proof of Theorem \ref{Thm:ConvexFunc2}}\label{Sec:Cor}
    The modifications of Algorithm \ref{alg:one} to work in this setting are minimal: we simply need to replace $L$ by $\Gamma$, and replace  \eqref{Eq:ConvCons2} with
    \begin{align}%\label{Eq:ConvCons}
    \forall (i, j) \in [|\cS|] \times [n]: \qquad & |y_{i,j}| \le \Gamma 
    \end{align}
    \begin{equation*}
    \begin{aligned}
    &\forall \  1\leq i \leq |\cS| \ \  \frac{1}{n}\sum_{j=1}^{n}(f(Z_{i,j})-\widehat{w}_i^{\top}(Z_{i,j},1))^2 \leq \widehat{l}_i^2 + \Gamma\sqrt{\frac{Cd\log(n)}{n}}
    \\& \forall (i,j) \in [|\cS|] \times [n] \quad  |f(Z_{i,j})| \leq \Gamma 
    \\&   \forall (i_1,j_1),(i_2,j_2) \in [|\cS|] \times [n] \quad f(Z_{i_2,j_2}) \geq \nabla f(Z_{i_1,j_1})^{\top}(Z_{i_2,j_2}-Z_{i_1,j_1}).
    \end{aligned}
    \end{equation*}
    
    For the correctness proof, we need some additional modifications. First, we replace Lemma \ref{Lem:Bro} with a similar bound in the $\Gamma$-bounded setting. The following lemma is based on the $L_4$ entropy bound of \citep[Thm 1.1]{gao2017entropy} and the peeling device \citep[Ch. 5]{van2000empirical}; it appears explicitly in \citep{han2016multivariate}):
    \begin{lemma}\label{Lem:Gao}
     Let $d \geq 5$, $m \geq C^{d}$ and $\QQ$ be a uniform measure on a convex polytope $P' \subset B_d$ and $Z_1,\ldots,Z_m \sim \QQ$. Then, the following holds uniformly for all $f,g \in \cF^{\Gamma}(P')$ 
    \[
        2^{-1}\int_{P'} (f-g)^2 d\QQ - C(P')\Gamma^2m^{-\frac{4}{d}} \leq \int (f-g)^2 d\QQ_m \leq 2\int_{P'} (f-g)^2 d\QQ +  C(P')\Gamma^2m^{-\frac{4}{d}},
    \]
    with probability at least $1 - C_1(P')\exp(-c_1(P')\sqrt{m})$.
    \end{lemma}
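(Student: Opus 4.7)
The proof plan is to follow the same template used in Lemma \ref{Lem:Bro}, replacing Bronshtein's $L^\infty$ covering bound by the sharper $L^4(\QQ)$ entropy estimate of Gao--Wellner (\citep[Thm 1.1]{gao2017entropy}), which asserts
\[
\log N(\eps, \cF^{\Gamma}(P'), \|\cdot\|_{L^4(\QQ)}) \le C(P')(\Gamma/\eps)^{d/2}.
\]
Since the $L^4(\QQ)$-entropy dominates the $L^2(\QQ)$-entropy, the same bound (with a different constant) controls covering numbers in $L^2(\QQ)$. The polytope-dependence of the constant, coming from the Gao--Wellner theorem, is what produces the constants $C(P'), c(P'), C_1(P'), c_1(P')$ in the conclusion.

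Next, I would lift the entropy bound from $\cF^{\Gamma}(P')$ to the squared-difference class $\cG := \{(f-g)^2 : f,g \in \cF^{\Gamma}(P')\}$. Every $h \in \cG$ has $\|h\|_\infty \le 4\Gamma^2$, and by Cauchy--Schwarz,
\[
\|(f_1-g_1)^2 - (f_2-g_2)^2\|_{L^2(\QQ)} \le 4\Gamma\bigl(\|f_1-f_2\|_{L^4(\QQ)} + \|g_1-g_2\|_{L^4(\QQ)}\bigr),
\]
so $\log N(\eps, \cG, \|\cdot\|_{L^2(\QQ)}) \le C(P')(\Gamma^2/\eps)^{d/2}$. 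For $d \ge 5$ the exponent $d/2 > 2$ places $\cG$ in the non-Donsker regime, and the critical radius at which the local Rademacher complexity balances the scale is precisely $r_0 \asymp \Gamma^2 m^{-4/d}$.

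With these ingredients in hand, I would apply the peeling (slicing) device of \citep[Ch. 5]{van2000empirical}: partition $\cG$ into dyadic shells $\cG_k := \{h \in \cG : 2^k r_0 \le \|h\|_{L^1(\QQ)} < 2^{k+1} r_0\}$ for $k=0,1,\ldots, O(\log m)$. On each shell, a localized Dudley/chaining bound produces the expected uniform deviation of the empirical process, and Bousquet's concentration inequality \citep{bousquet2002concentration} with deviation parameter $t \asymp \sqrt{m}$ yields, with probability at least $1-\exp(-c(P')\sqrt m)$,
\[
\sup_{h \in \cG_k}\bigl|(\PP_m - \PP)h\bigr| \le \tfrac{1}{2} \cdot 2^k r_0 + C(P')\Gamma^2 m^{-4/d}.
\]
Taking a union bound over the $O(\log m)$ shells and absorbing the logarithm into the constant $C(P')$ gives
$
\bigl|\int (f-g)^2 d\QQ_m - \int (f-g)^2 d\QQ\bigr| \le \tfrac{1}{2}\int (f-g)^2 d\QQ + C(P')\Gamma^2 m^{-4/d}
$
uniformly in $f,g \in \cF^\Gamma(P')$, which rearranges into the two-sided inequality claimed in the lemma.

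The main obstacle is the non-Donsker localization: one must verify that the fixed-point equation for the critical radius — in which the truncated entropy integral is balanced against $r/m$ — is indeed solved at $r_0 \asymp \Gamma^2 m^{-4/d}$ when $d \ge 5$, and that the Bernstein correction term $tU/m$ in Bousquet's inequality remains subdominant to the chaining term under the choice $t \asymp \sqrt m$. Both checks are standard (and appear explicitly in \citep{han2016multivariate}), so the only remaining work is to keep track of how $C(P'), c(P')$ propagate from the Gao--Wellner entropy estimate through the chaining bound into the final constants.
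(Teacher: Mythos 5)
Your proposal reconstructs precisely the argument that the paper attributes to Gao--Wellner's $L^4$ entropy bound, the peeling device, and Bousquet's inequality, and which the paper does not spell out but instead cites as appearing in \citet{han2016multivariate}; the route through the $L^4$-to-$L^2$ Cauchy--Schwarz lift of the entropy bound to the squared-difference class, the identification of the critical radius $r_0 \asymp \Gamma^2 m^{-4/d}$, and the $t \asymp \sqrt m$ choice in Bousquet all match the intended proof. One small imprecision: the $O(\log m)$ factor from the union bound over shells should be absorbed into the exponential failure probability (since $\log m \cdot e^{-c\sqrt m} \le e^{-c'\sqrt m}$), not into the deviation constant $C(P')$, but this does not affect the soundness of the argument.
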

    Note that differently from Lemma \ref{Lem:Bro}, the constant before $m^{-4/d}$ depends on the domain $P'$, and this dependence cannot be removed. Since $\mathcal F^\Gamma(P')$ has finite $L^2$-entropy for every $\epsilon$, it is in particular compact in $L^2(P')$, which means that the proof of Lemma \ref{Lem:VerMSE} in \S \ref{App:VerMSE} works for this class of functions as well.
    
    The proof of Theorem \ref{Cor:Aproximation} also goes through for this case, by replacing Lemma \ref{Lem:Bro} by Lemma \ref{Lem:Gao}. The precise statement we obtain is the following:
    
    \begin{theorem}\label{Cor:Aproximation2}
     Let $P \subset B_d$ be a convex polytope, $f \in \cF^{\Gamma}(P)$, and some integer $k \geq (Cd)^{d/2}$, for some large enough $C \geq 0$, there exists a convex set $P_k \subset P$ and a $k$-simplicial convex function $f_k: P_k \to \R$ such that 
    \begin{equation*}\label{Eq:support2}
    \PP(P \setminus P_{k}) \leq C(P)k^{-\frac{d+2}{d}}\log(k)^{d-1}.
    \end{equation*}
    and 
    \begin{equation*}\label{Eq:Error2}
        \int_{P_k}(f_k-f)^2d\PP \leq \Gamma^2 \cdot C(P)k^{-\frac{4}{d}} 
    \end{equation*}
    % and 
    % \[
    %     \|f_k-f^{*}\|_{\infty} \leq O_d(Lk^{-\frac{2(d+2)}{(d+1)}}).
    % \]
    %Furthermore,  we also have that
    %    (1-c_d\log(k)^{d-1}k^{-\frac{d+2}{d}})
    %  and the Hausdorff distance $d_{H}(\Omega  ,\Omega_k)$ is bounded by $ O_d(k^{-\frac{d+2}{d}\frac{1}{d}})$.
    \end{theorem}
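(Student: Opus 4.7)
The plan is to follow the proof of Theorem \ref{Cor:Aproximation} in Section \ref{App:One} essentially verbatim, substituting Lemma \ref{Lem:Gao} for Lemma \ref{Lem:Bro} at the single point where the empirical-to-population $L^2$-transfer is invoked. After normalizing to $\Gamma = 1$ and translating so that $\mathrm{Im}(f^*) \subset [0, 1]$, I form the convex body $K = \{(x, y) \in P \times [0, 2] : f^*(x) \le y\}$, sample $n = \lfloor k^{(d+2)/d} \rfloor$ points $X_1, \ldots, X_n$ uniformly from $K$, and define $K_n = \mathrm{conv}(X_1, \ldots, X_n)$, $P_k := \pi(K_n)$ (where $\pi$ projects to the first $d$ coordinates), and $f_k : P_k \to [0, 2]$ as the lower envelope of $K_n$. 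The key observation is that $f_k$ is automatically bounded with range in $[0, 2]$, so $f_k \in \cF^{2}(P_k)$; this is exactly what licenses the use of Lemma \ref{Lem:Gao} in place of Lemma \ref{Lem:Bro}, with no need for a Lipschitz hypothesis.

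The first two properties proceed identically to the Lipschitz argument. For $k$-simpliciality, I apply Theorem \ref{Thm:facets} to bound $\E[f_{d-1}(K_n)] \le C(d) k$ and use Markov's inequality, noting that each bottom facet of $K_n$ projects to a $d$-simplex in $P_k$ on which $f_k$ is affine. For the volume bound on $P \setminus P_k$, I decompose $K = K_1 \cup K_2$ with $K_1 = \mathrm{epi}(f^*) \cap (P \times [0, 1])$ and $K_2 = P \times [1, 2]$, observe that samples from $K_2$ project uniformly onto $P$, use a binomial tail to obtain at least $n/4$ samples from $K_2$ with probability $1 - e^{-\Omega(n)}$, and then invoke Theorem \ref{Thm:RndPolyVol} together with Markov's inequality to conclude $\PP(P \setminus P_k) \le C(P) k^{-(d+2)/d} \log^{d-1} k$.

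The core step is the $L^2$-approximation bound. Set $\epsilon = k^{-2/d}$ and $K_\epsilon = \{(x, y) : x \in P,\ f^*(x) \le y \le f^*(x) + \epsilon\}$; by Fubini $\vol(K_\epsilon) = \epsilon \vol(P)$ and samples from $K_\epsilon$ project uniformly onto $P$, so a binomial tail yields $L := |\{i : X_i \in K_\epsilon\}| \ge k/4$ with probability $1 - e^{-\Omega(n)}$. For each such $X_i$, the inclusions $X_i \in K_n$ and $X_i \in K_\epsilon$ sandwich $f^*(\pi(X_i)) \le f_k(\pi(X_i)) \le f^*(\pi(X_i)) + \epsilon$, so the empirical $L^2$-error with respect to $\PP_L := \frac{1}{L}\sum_i \delta_{\pi(X_i)}$ is at most $\epsilon^2 = k^{-4/d}$. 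To pass to the true $L^2$-error, I reuse the auxiliary-function device of the Lipschitz proof: define $\hat f_k$ on $P_{\Pi(X)} := \mathrm{conv}(\pi(X_i) : X_i \in K_\epsilon)$ as the lower convex envelope of $\{(\pi(X_i), f^*(\pi(X_i)))\}$ (automatically bounded in $[0, 1]$ and convex), apply Lemma \ref{Lem:Gao} to the pair $(\hat f_k, f^*)$, and use the sandwich $\hat f_k \le f_k \le \hat f_k + C k^{-2/d}$ on $P_{\Pi(X)}$ to transfer the bound back to $f_k$. A further application of Theorem \ref{Thm:RndPolyVol} bounds $\PP(P_k \setminus P_{\Pi(X)}) \le C(P) k^{-1} \log^{d-1} k$, which contributes only $O(C(P) k^{-4/d})$ to the total since $d \ge 5$ forces $k^{-1} \log^{d-1} k = o(k^{-4/d})$; this is precisely why the extra $k^{-1} \log^{d-1} k$ term present in Theorem \ref{Cor:Aproximation} disappears from the statement here.

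The principal obstacle is the polytope-dependence of the constant in Lemma \ref{Lem:Gao}. Applied naively on $P_{\Pi(X)}$, the constant is $C(P_{\Pi(X)})$, a random quantity depending on the inscribed random sub-polytope rather than on $P$ itself, whereas the theorem demands the deterministic constant $C(P)$. The cleanest resolution is to extend $\hat f_k$ to a bounded convex function on all of $P$ (for example by taking the convex envelope with respect to an appropriate upper bound) and then invoke Lemma \ref{Lem:Gao} on the fixed polytope $P$; some care is needed because arbitrary convex extensions of a bounded convex function can become unbounded, so the construction of the extension must exploit the fact that $\hat f_k$ is already bounded and the geometry of $P \setminus P_{\Pi(X)}$ is negligible. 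Once this technical point is handled, each of the three events holds with probability at least $9/10$, their intersection is nonempty, and a valid $f_k$ exists.
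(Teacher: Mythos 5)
Your proposal follows the paper's own proof of Theorem \ref{Cor:Aproximation} with the substitution the paper prescribes (Lemma \ref{Lem:Gao} for Lemma \ref{Lem:Bro}), and the ancillary steps (facet count via Theorem \ref{Thm:facets}, volume bound via Theorem \ref{Thm:RndPolyVol}, the thin shell $K_\epsilon$ and the empirical $L^2$-bound, and the observation that for $d \ge 5$ the term $k^{-1}\log^{d-1}k$ is dominated by $k^{-4/d}$, which is exactly why it vanishes from the statement) are all correct and match the paper's reasoning. You have also correctly identified the one point where the substitution is not literally mechanical --- the paper does not spell out how to place the auxiliary function $\hat f_k$ in a fixed class $\cF^{O(1)}(P)$ so that Lemma \ref{Lem:Gao} can be invoked with the deterministic constant $C(P)$. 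In the Lipschitz proof this is a non-issue: the maximal convex $1$-Lipschitz extension of $\hat f_k$ from $P_{\Pi(X)}$ to $\Omega$ is automatically $1$-Lipschitz and hence uniformly bounded.

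Where the proposal falls short is in the proposed resolution of that subtlety, which is gestured at rather than carried through, and in fact the most natural version of it fails. The maximal convex extension of $\hat f_k^{\mathrm{orig}}$ from $P_{\Pi(X)}$ to $P$ is a pointwise supremum of affine minorants, and although it is finite on the bounded set $P$, its supremum over $P$ is controlled only by the largest slope of the affine pieces of $\hat f_k^{\mathrm{orig}}$. Those slopes can blow up near $\partial P_{\Pi(X)}$ (bounded convex functions on a polytope need not be Lipschitz near the boundary, and $f^*$ itself may have unbounded gradient near $\partial P$), so the resulting bound $M$ is a random quantity, not a constant depending only on $P, \Gamma$; one cannot then invoke Lemma \ref{Lem:Gao} with a fixed $\Gamma' = M$. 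Shrinking $P_{\Pi(X)}$ by a constant fraction $\delta$ to control the slope trades boundedness of the extension against a volume loss of order $\delta$, which is too large unless $\delta$ is small, and making $\delta$ small re-inflates the bound --- the trade-off never closes. The paper itself skips this point, so you are right to flag it, but flagging it is not resolving it: as written, your Step on the extension is a genuine gap. One way to close it is to build the auxiliary function directly on all of $P$ by taking the lower convex envelope of $\{(\Pi(X_i), f^*(\Pi(X_i)))\}_{i \in L} \cup \{(v, f^*(v)) : v \in V(P)\}$, which is automatically convex, has range in $[0,1]$, lies above $f^*$, and coincides with $f^*$ at every $\Pi(X_i)$, so the empirical $L^2$-error is identically zero and Lemma \ref{Lem:Gao} applies directly on the fixed polytope $P$; to preserve the sandwich $f_k \le \hat f_k + k^{-2/d}$ on all of $P$ one must also augment the defining point set of $K_n$ by $\{(v, f^*(v))\}_{v \in V(P)}$, after which the facet count and the rest of the argument must be rechecked (the new facet bound involves constants depending on $|V(P)|$, which the theorem's $C(P)$ is allowed to absorb). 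Absent some such concrete construction, the proposal does not yet constitute a complete proof of the $L^2$-approximation bound.
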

    
    The remaining lemmas and arguments in the proof of Theorem \ref{Thm:ConvexFunc}, can easily be seen to apply in the setting of $\Gamma$-bounded regression under polytopal support $P$.

    \section{Simplified version of our estimator}\label{App:SimpleEstimator}
    
    Like the estimator for our original problem, the simplified version of our estimator is based on the existence of a simplicial approximation $\widehat f_{k(n)}: \Omega_{k(n)} \to [0, 1]$ to the unknown convex function $f^*$ (Theorem \ref{Thm:ConvexFunc}). Here we demonstrate how to recover $f^*$ to within the desired accuracy if we are given the simplicial structure of $f_{k(n)}$, i.e., the set $\Omega_{k(n)}$ and the decomposition $\bigcup_{i = 1}^{k(n)} \triangle_i$ of $\Omega_{k(n)}$ into simplices such that $ f_{k(n)}|_{\triangle_i}$ is affine for each $i$. In this case the performance of our algorithm is rather better: it runs in time $O_d(n^{O(1)})$ rather than $n^{O(d)}$, and is minimax optimal up to a constant that depends on $d$. We can also slightly weaken the assumptions: it is no longer required that the variance $\sigma^2$ of the noise be given.

     We will use the following classical estimator \citep[Thm 11.3]{gyorfi2002distribution}; it is quoted here with an improved bound which is proven in \citep[Theorem A]{mourtada2021distribution}:
    \begin{lemma}{\label{Lem:LinearRegE}}
     Let $m \geq d+1$, $d \geq 1$ and $\QQ$ be a probability measure that is supported on some $\Omega' \subset \R^d$. Consider the regression model $
        W= f^{*}(Z) + \xi$, 
     where $f^{*}$ is $L$-Lipschitz and $\|f^{*}\|_{\infty} \leq L$, and $Z_1,\ldots,Z_m \underset{i.i.d.}{\sim} \QQ$. Then, the exists an estimator $\widehat{f}_{R}$ that has an input of $\{(Z_i,W_i)\}_{i=1}^{m}$ and runtime of $O_d(n)$ and outputs a function such that
     \[
         \E \int (\widehat{f}_{R}(x) - f^{*}(x))^2d\QQ(x) \leq \frac{Cd(\sigma+ L)^2}{m} + \inf_{ w \in \R^{d+1}}\int (w^{\top}(x,1)-f^{*}(x))^2d\QQ(x).
    \]
    \end{lemma}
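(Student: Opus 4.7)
The plan is to take $\hat f_R$ to be the ordinary least squares (OLS) estimator on the $(d+1)$-dimensional affine class $\mathcal A = \{x \mapsto w^\top(x,1) : w \in \R^{d+1}\}$, that is, $\hat w = \argmin_{w \in \R^{d+1}} \frac{1}{m}\sum_{i = 1}^m (w^\top(Z_i, 1) - W_i)^2$, followed by clipping the resulting affine function pointwise to $[-L, L]$. Computing $\hat w$ requires forming the $(d+1)\times(d+1)$ empirical Gram matrix and solving the normal equations, both of which take $O_d(m)$ time; clipping at an evaluation point is $O(1)$. Since $\|f^*\|_\infty \le L$, clipping can only decrease the $L^2(\QQ)$-error, so it is harmless for the risk bound.

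For the risk analysis, let $w^* = \argmin_{w \in \R^{d+1}} \|w^\top(\cdot,1) - f^*\|^2_{L^2(\QQ)}$ and decompose $f^*(x) = (w^*)^\top(x,1) + r(x)$, where the residual $r$ is $L^2(\QQ)$-orthogonal to all affine functions. By this orthogonality, for any deterministic $\tilde w \in \R^{d+1}$,
$$\|\tilde w^\top(\cdot,1) - f^*\|^2_{L^2(\QQ)} = \|(\tilde w - w^*)^\top(\cdot,1)\|^2_{L^2(\QQ)} + \|r\|^2_{L^2(\QQ)},$$
and the second summand on the right equals the infimum appearing in the statement of the lemma. Applying this identity with $\tilde w = \hat w$ and taking expectations reduces the proof to the estimation bound $\E \|(\hat w - w^*)^\top(\cdot,1)\|^2_{L^2(\QQ)} \le Cd(\sigma + L)^2/m$.

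The main obstacle is establishing this estimation bound with no assumption on $\QQ$---in particular, with no control on the conditioning of the population covariance $\Sigma = \E[(Z,1)(Z,1)^\top]$ or on the tails of the feature vectors. The usual textbook OLS analysis yields $\E \|\hat w - w^*\|_\Sigma^2 \le d\sigma^2/m$ only after conditioning on the high-probability event that the empirical Gram matrix $\hat\Sigma = \frac{1}{m}\sum_{i=1}^m (Z_i,1)(Z_i,1)^\top$ is comparable to $\Sigma$, and without feature moment assumptions this event may have only constant probability. The resolution, which I would invoke from \citep{mourtada2021distribution}, is a two-regime analysis: on the event that $\hat\Sigma$ is comparable to $\Sigma$, the standard noise-projection argument gives the $d\sigma^2/m$ contribution; on the complementary event, the clipping to $[-L, L]$ bounds the pointwise squared error by $4L^2$, and a careful union bound over the coordinate subspaces of $\R^{d+1}$ together with the anti-concentration of $\hat\Sigma$ on these subspaces shows that this regime contributes at most $CdL^2/m$. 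Summing the two regimes produces the $(\sigma + L)^2$ factor in the claimed estimation bound, and combining with the approximation-error decomposition of the previous paragraph completes the proof.
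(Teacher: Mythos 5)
The paper does not actually prove this lemma: it simply quotes the result, attributing the estimator to \citep[Thm 11.3]{gyorfi2002distribution} (truncated least squares over the affine class) and the sharpened $O(d(\sigma+L)^2/m)$ bound, without logarithmic factors, to \citep[Theorem A]{mourtada2021distribution}. Your proposal is essentially the same route: the estimator you describe (OLS over $\R^{d+1}$ followed by pointwise clipping to $[-L,L]$) is exactly the truncated least-squares estimator of Gy\"orfi et al., and you explicitly invoke Mourtada et al. for the hard distribution-free estimation bound. So in terms of the key identification (which estimator, which cited result does the heavy lifting), you match the paper.

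One expository inaccuracy worth flagging: the sentence ``Applying this identity with $\tilde w = \hat w$ \ldots{} reduces the proof to the estimation bound $\E\|(\hat w - w^*)^\top(\cdot,1)\|^2 \le Cd(\sigma+L)^2/m$'' is not quite the right reduction. For the raw OLS coefficient vector $\hat w$, the quantity $\E\|(\hat w - w^*)^\top(\cdot,1)\|^2_{L^2(\QQ)}$ can be infinite in the distribution-free setting (this is precisely the pathology that forces truncation), and clipping $\hat w^\top(\cdot,1)$ controls $\|\hat f_R - f^*\|$ but not $\|(\hat w - w^*)^\top(\cdot,1)\|$. The correct move is to split $\E\|\hat f_R - f^*\|^2_{L^2(\QQ)}$ over the good and bad events \emph{first}: on the good event apply clipping-is-harmless plus the Pythagorean identity and bound $\|(\hat w - w^*)^\top(\cdot,1)\|^2$ by noise projection; on the bad event use $\|\hat f_R - f^*\|_\infty \le 2L$. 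Your subsequent ``two-regime'' sketch says exactly this, so the argument you have in mind is fine; only the intermediate reduction sentence overstates what the Pythagorean decomposition alone gives you.
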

    Note that this estimator is \textit{distribution-free}: it works irrespective of the structure of $\mathbb Q$, nor does it require that $\mathbb Q$ be known.
    
    % Also, we will state our second approximation theorem:
    % \begin{theorem}\label{Thm:AproxGen}
    %  Let $\Omega \subset B_d$ be a convex set, $f \in \cF_{L}(\Omega)$, and some integer $k \geq (Cd)^{d/2}$, for some large enough $C \geq 0$, there exists a convex set $\Omega_k \subset \Omega$ and a $k$-simplicial convex function $f_k: \Omega_k \to \R$ such that 
    % \begin{equation}\label{Eq:supportG}
    % \frac{\PP(\Omega \setminus \Omega_{k})}{\PP(\Omega)} \leq Cdk^{-\frac{2(d+2)}{d(d+1)}}.
    % \end{equation}
    % and 
    % \begin{equation}\label{Eq:ErrorG}
    %     \int_{\Omega_k}(f_k-f)^2d\PP \leq O_d(L^2k^{-\frac{4}{d}}\log(k)^{\frac{2}{d+1}}).
    % \end{equation}
    % % and 
    % % \[
    % %     \|f_k-f^{*}\|_{\infty} \leq O_d(Lk^{-\frac{2(d+2)}{(d+1)}}).
    % % \]
    % %Furthermore,  we also have that
    % %    (1-c_d\log(k)^{d-1}k^{-\frac{d+2}{d}})
    % %  and the Hausdorff distance $d_{H}(\Omega  ,\Omega_k)$ is bounded by $ O_d(k^{-\frac{d+2}{d}\frac{1}{d}})$.
    % \end{theorem}

    The first step of the simplified algorithm is estimating $\left.f^*\right|_{\triangle_i}$ on each $\triangle_{i} \subset \Omega_{k(n)}$ ($1 \leq i \leq k(n)$) with the estimator $\widehat{f}_{R}$ defined in Lemma \ref{Lem:LinearRegE} (with respect to the probability measure $\PP(\cdot |\triangle_i)$) with the input of the data points in $\cD$ that lie in $\triangle_i$. We obtain independent regressors $\widehat{f}_1,\ldots,\widehat{f}_{k(n)}$ such that
    \begin{equation}\label{Eq:LinearRegKnown}
        \E \int_{\triangle_i} (\widehat{f}_i(x) - f^{*}(x))^2 \frac{d\PP}{\PP(\triangle_i)} \leq \inf_{w \in \R^{d+1}}\int_{\triangle_i}(w^{\top}(x,1)-f^{*}(x))^2 \frac{d\PP}{\PP(\triangle_i)}  + \E \min\{\frac{Cd}{\PP_n(\triangle_i)n},1\},
    \end{equation}
    where  the $\min \{\cdot, 1\}$ part follows from the fact that when we have less than $Cd$ points, we can always set $\widehat{f}_i$ to be the zero function.
    
    % Therefore, 
    % \begin{equation*}
    %     \E \int_{\triangle_i}(\widehat{w}_i^{\top}(x,1) - f^{*})^2 d\PP \leq \inf_{w \in \R^{d+1}}\int_{\triangle_i}(w^{\top}(x,1)-f^{*}(x))^2 d\PP  + \frac{C_1d}{n}.
    % \end{equation}
    Now, we define the function $f'(x) := \sum_{i=1}^{k(n)}\widehat{f}_i(x)\mathbbm{1}_{x \in \triangle_i}$, and by multiplying the last equation by $\PP(\triangle_i)$ for each $1 \leq i \leq k(n)$ and taking a sum over $i$, we obtain that  
    \begin{equation}\label{Eq:11}
    \begin{aligned}
         \E \int_{\Omega_{k(n)}} (f'- f^{*})^2 d\PP &\leq
        %  \sum_{i=1}^{k(n)} \inf_{w_i \in \R^{d+1}}\int_{\triangle_i}(w_i^{\top}(x,1)-f^{*})^2d\PP
         \sum_{i=1}^{k(n)} \inf_{w_i \in \R^{d+1}}\int_{\triangle_i}(w_i^{\top}(x,1)-f^{*})^2d\PP + \E \sum_{i=1}^{k(n)} \min\{\frac{Cd\cdot \PP(\triangle_i)}{n \cdot \PP_n(\triangle_i)},\PP(\triangle_i)\}
         \\& \leq \int_{\Omega_{k(n)}}(f_{k(n)}-f^{*})^2 d\PP + C_1dk(n) \cdot n^{-1}  =  O_d(n^{-\frac{4}{d+4}}), 
        %  \\& \leq \int_{\Omega_{k(n)}}(f_{k(n)}-f^{*})^2 d\PP  + O_d(n^{-\frac{4}{d+4}})
        %   =O_d(n^{-\frac{4}{d+4}}),
    \end{aligned}
    \end{equation}
    where in the first equation, we used the the fact that $n\cdot 
    \PP_n(\triangle_i) \sim  Bin(n,\PP(\triangle_i))$ (for completeness, see Lemma \ref{Lem:Standard}), and in the last inequality we used  \eqref{Eq:boundOne}. 
    % and our assumption of $C(P) = O_d(1)$.
     Next, recall that Theorem \ref{Cor:Aproximation} implies that
    \[
      \PP(\Omega \setminus \Omega_{k(n)}) \leq C(d)k(n)^{-\frac{d+2}{d}} \leq O_d(n^{-\frac{d+2}{d+4}}). 
    \]
    Therefore, if we consider the (not necessarily convex) function
    $
        \tilde{f} = f'\mathbbm{1}_{\Omega_{k(n)}} + \mathbbm{1}_{\Omega \setminus \Omega_{k(n)}},
    $
    we obtain that
    \begin{align*}
        \E \int_{\Omega} (f'- f^{*})^2 d\PP &= \E \int_{\Omega \setminus \Omega_{k(n)}} (f'- f^{*})^2 d\PP+ \E \int_{\Omega_{k(n)}} (f'- f^{*})^2 d\PP \leq O_d(n^{-\frac{d+2}{d+4}}+ n^{-\frac{4}{d+4}} )  \\&\leq O_d(n^{-\frac{4}{d+4}}).   
    \end{align*}
    % Hence, if we consider the function $\tilde{f}$
    % and therefore by the concentration of $M \sim Bin(n,\PP(\Omega \setminus \Omega_{k(n)}))$, we conclude by the last equation
    % \begin{align*}
    % \E \int_{\Omega \setminus \Omega_{k(n)}} (f_{\Omega_{k(n)}^{c}} - f^{*})^2 d\PP &\leq C_1\PP(\Omega \setminus \Omega_{k(n)})\E[M^{-\frac{2}{d}}] \leq C_2\PP(\Omega \setminus \Omega_{k(n)})(n\cdot \PP(\Omega \setminus \Omega_{k(n)}))^{-\frac{2}{d}}   \\&\leq O_d(n^{-\frac{2(d+2)}{(d+4)(d+1)}} n^{-\frac{2}{d}+\frac{4(d+2)}{d(d+4)(d+1)}}) = O_d(n^{-(\frac{4}{d+4}+\frac{6}{(1 + d)(4 + d)})}),
    % \end{align*}
    % and  \eqref{Eq:Boundar} follows. Hence, we conclude by Eqs. \eqref{Eq:11} and \eqref{Eq:Boundar}  that the $\bar{f}_n = f'\mathbbm{1}_{\Omega_{k(n)}}+f_{\Omega_{k(n)}^{c}}\mathbbm{1}_{\Omega \setminus \Omega_{k(n)}}$ satisfies
    % \[
    %     \E \int_{\Omega}(\bar{f}_n- f^{*})^2d\PP \leq O_d(n^{-\frac{4}{d+4}}).
    % \]
    
    Thus, $\tilde{f}$ is a minimax optimal \emph{improper} estimator. To obtain a proper estimator, we simply need to replace $\tilde f$ by $MP(\tilde{f})$, where $MP$ is the procedure defined in Appendix \ref{App:MP}. 
    
    It remains only to point out that the runtime of this estimator is of order $O_d(n^{O(1)})$.  Indeed, the procedure $MP$ is essentially a convex LSE on $n$ points, which can be formulated as a quadratic programming problem with $O(n^2)$ constraints, and hence can be computed in $O_d(n^{O(1)})$ time \citep{seijo2011nonparametric}. In addition, the runtime of the other estimator we use, namely the estimator of Lemma \ref{Lem:LinearRegE}, is linear in the number of inputs.
    %%%%%%%%%%%%%%%%%%%%%%%%%%%%%%%%%%%%%%%%%%%%%%%%%%%%%%%
    \section{From an Improper to a Proper Estimator}\label{App:MP}
    The following procedure, which we named $MP$, is classical and we give its description and prove its correctness here for completeness. However, note that we only give a proof for optimality in expectation; high-probability bounds can be obtained using standard concentration inequalities.
    
    The procedure $MP$ is defined as follows: given an improper estimator $\tilde f$, draw $X_{1}',\ldots,X_{k(n)}' \underset{i.i.d.}{\sim} \PP$, and apply the convex LSE with the input $\{(X_{i}',\tilde{f}(X_{i}'))\}_{i=1}^{k(n)}$, yielding a function $\widehat f_1$.
    % , denoted by  $\widehat{f}_{1}$.
    % \[
    %     % \begin{equation}
    % \E \int_{\Omega} (\widehat{f}_1 - f^{*})^2 d\PP =  O_d(n^{-\frac{4}{d+4}}).
    % \]
    We remark that the convex LSE is only unique on the convex hull of the data-points $X_{1}',\ldots,X_{k(n)}'$, and not on the entire domain $\Omega$ \citep{seijo2011nonparametric}, so we will show that any solution $\widehat f_1$ of the convex LSE is optimal.
    First off, we have
    % , note that by linearity of expectation and Eqs. \eqref{Eq:11} \eqref{Eq:Boundary} we have
    \begin{equation}\label{Eq:FixedDesign}
        \E \int_{\Omega} (\tilde{f} - f^{*})^2 d\PP'_{k(n)} =   \E \int_{\Omega} (\tilde{f} - f^{*})^2 d\PP 
        % \leq O_d(n^{-\frac{4}{d+4}}\log(n)^{d+1}).
    \end{equation}
    
    Also recall the classical observation that for $\widehat{f}_1$ that is defined above, we know that \[(\widehat{f}_1(X_{1}'),\ldots,\widehat{f}_1(X_{k(n)}'))\]  is precisely the projection of $(\tilde{f}(X_{1}'),\ldots,\tilde{f}(X_{k(n)}'))$ on the \emph{convex set}
    \[
        \cF_{k(n)}:=\{(f(X_{1}'),\ldots,f(X_{k(n)}')): f \in \cF_1(\Omega)\} \subset \mathbb R^{k(n)},
    \]
    cf. \citep{chatterjee2014new}. Now, the function $\Pi_{\cF_{k(n)}}$ sending a point to its projection onto $\cF_{k(n)}$, like any projection to a convex set, is a $1$-Lipschitz function, i.e.,
    \[
        \|\Pi_{\cF_{k(n)}}(x) - \Pi_{\cF_{k(n)}}(y) \| \leq \|x-y\| \quad \forall x,y \in \R^{k(n)}.
    \]
    We also know that $(\widehat{f}_1(X_i'))_{i = 1}^{k(n)} = \Pi_{\cF_{k(n)}}(\tilde f)$ and  $\Pi_{\cF_{k(n)}}((f^{*}(X_i'))_{i = 1}^{k(n)}) = (f^{*}(X_i'))_{i = 1}^{k(n)}$; substituting in the preceding equation, we therefore obtain
    % $
    %     (\tilde{f}'(X_{n+1}),\ldots,\tilde{f}'(X_{2n})) := \Pi_{\cF_n}((\tilde{f}(X_{n+1}),\ldots,\tilde{f}(X_{2n}))
    % $
    % satisfies 
    \[
    \E \int_{\Omega} (\widehat{f}_1-f^{*})^2 d\PP_{k(n)}'  \leq \E \int_{\Omega} (\tilde{f}-f^{*})^2 d\PP_{k(n)}' = \E \int_{\Omega} (\tilde{f} - f^{*})^2 d\PP,
    \]
    since $\int (\cdot )^2 d\PP_{k(n)}'$ is just $\|\cdot \|^2/k(n)$. In order to conclude the minimax optimality of $\widehat{f}_1$, we know by Lemma \ref{Lem:Bro} that for any function in
    \[ 
      \cO:= \left\{f \in \cF_1:  \int_{\Omega}(f-\tilde{f}')^2 d\PP_{k(n)}' = 0\right\},
    \]
    % \[ (f(X_{n+1}),\ldots,f(X_{2n})) = (\tilde{f}'(X_{n+1}),\ldots,\tilde{f}'(X_{2n})) \]
    it holds that
    \[
        \E \int (f-f^{*})^2 d\PP  \leq 2\E \int(\tilde{f}-f^{*})^2  d\PP_{k(n)}' +  Ck(n)^{-\frac{4}{d}} \leq 2\E \int(\tilde{f}-f^{*})^2  d\PP + C_1n^{-\frac{4}{d+4}},
     \]
    where we used  \eqref{Eq:FixedDesign} and the fact that $k(n) = n^{\frac{d}{d+4}}$. Since we showed that $\widehat{f}_1$ must lie in $\cO$, the minimax optimality of this proper estimator follows.
    % (under its unrealistic assumption).

\end{document}